\newtheorem{theorem}{Theorem}
\newtheorem{lemma}[theorem]{Lemma}
\newtheorem{remark}{Remark}
\newtheorem{definition}[theorem]{Definition}
\newtheorem{assumption}{Assumption}
\newtheorem{example}{Example}
\DeclareMathOperator{\Ima}{Im}
\DeclareMathOperator{\Id}{Id}
\newcommand{\red}[1]{{\color{red} #1}}
\DeclareMathOperator{\diver}{div}
\newcommand{\grad}{\textbf{\textup{grad}}}
\newcommand{\tensor}[1]{\overline{\bm{#1}}}
\newcommand{\Diver}{\textbf{\textup{Div}}}
\newcommand{\Grad}{\textbf{\textup{Grad}}}
\newcommand{\curldeuxD}{\text{curl}_{2D}}
\newcommand{\curl}{\textbf{\textup{curl}}}
\newcommand{\perpext}{{\perp \!\!\!\! \perp_{-}}}
\renewcommand{\d}{\textup{d}}
\newcommand{\dsp}{\displaystyle}
\title{Stokes-Lagrange and Stokes-Dirac representations of $N$-dimensional port-Hamiltonian systems for modelling and control\footnote{This work was supported by the IMPACTS project funded by the French National Research Agency (project ANR-21-CE48-001), https://impacts.ens2m.fr/}}
\author[1]{Antoine Bendimerad-Hohl}
\author[1]{Ghislain Haine}
\author[2]{Laurent Lef\`{e}vre}
\author[1]{Denis Matignon}
\affil[1]{Fédération ENAC ISAE-SUPAERO ONERA, Université de Toulouse, Toulouse, France}
\affil[2]{Univ. Grenoble Alpes, Grenoble INP, LCIS, Valence, France}
\begin{document}

\maketitle

\begin{abstract}
In this paper, we extend the port-Hamiltonian framework by introducing the concept of Stokes-Lagrange structure, which enables the implicit definition of a Hamiltonian over an $N$-dimensional domain and incorporates energy ports into the system. This new framework parallels the existing Dirac and Stokes-Dirac structures. We propose the Stokes-Lagrange structure as a specific case where the subspace is explicitly defined via differential operators that satisfy an integration by parts formula. By examining various examples through the lens of the Stokes-Lagrange structure, we demonstrate the existence of multiple equivalent system representations. These representations provide significant advantages for both numerical simulation and control design, offering additional tools for the modelling and control of port-Hamiltonian systems.\\
\newline
\textbf{Keywords:} Port-Hamiltonian systems; Stokes-Dirac structure; Stokes-Lagrange structure; Reissner-Mindlin model; Kirchhoff-Love model; \linebreak{}Maxwell's equations; Passivity-based control
\newline
\textbf{Mathematics Subject Classification:} 93C20; 37K06; 74K20; 35Q61
\end{abstract}

\section{Introduction}
\label{sec:intro}

%\red{Add \S ~what is a pH?}
The Port-Hamiltonian (pH) framework stems from classical Hamiltonian formalism and bond graphs theory. As in the classical Hamiltonian formalism, the dynamics of the state $\alpha$ derive from a Hamiltonian functional $H(\alpha)$ (\textit{e.g.} energy, entropy, exergy, \textit{etc.}), however the underlying structure is not symplectic but defined as a more general \emph{Dirac} structure (in particular the dimension of the state space is not necessarily even). This structure,  similarly to bond graphs, allows for representing systems as subsystems interconnected through power preserving interconnections. In particular, it is composed of pairs of flow and effort variables $(f,e)$ named \emph{power ports} whose duality product $\langle e,f\rangle$ corresponds to a power; the Dirac structure then guarantees that the power balance over its power ports is zero: $\sum_i \langle e_i,f_i\rangle = 0$ which enforces the preservation of energy.

These ports are usually of three different natures: storage ports, control ports and dissipative ports. Storage ports contain a state variable $\alpha$ and its flows and effort are defined as $f_s = - \frac{\d}{\d t}\alpha,\, e_s = \nabla_\alpha H$. Control ports allow for both controlling the system and interconnecting it with another pH system, its flows and effort are usually defined as the control and collocated observation respectively $f=u, \, e=y$ (note that ensuring that $\langle e_u, f_u\rangle$ is well-defined and  corresponds to a power requires the observation to be collocated with the control). Finally, resistive ports allow for modeling dissipativity, the flow and effort corresponding (in the mechanical case) to some velocity and friction force respectively such that $f_r= - R\, e_r$ with $R \geq 0$. Writing the power balance $\sum_i \langle e_i,f_i\rangle = 0$ gives us that $\frac{\d}{\d t}H(\alpha) = \langle u,y \rangle - \langle e_r, R e_r\rangle \leq \langle u,y \rangle $; the variation of energy is equal to the power provided by the control minus the dissipative phenomena.

PH systems have been developed to model, simulate, and control complex multiphysics systems, including mechanics, electromagnetism, and irreversible thermodynamics (see~\cite{van2014port} for an introductory textbook and~\cite{duindam2009modeling} for a wide array of applications, and more specifically~\cite{Gay_Balmaz_2017} for nonequilibrium thermodynamics using a Lagrangian variational formulation). The seminal work of~\cite{van2002hamiltonian} extended the pH framework to distributed parameter systems (DPS) with boundary energy flows. In particular, the semigroup approach applied to linear distributed parameter pH systems has yielded elegant results in the 1D case (see~\cite{jacob2012linear} for an introduction), with recent extensions to $N$-dimensional systems presented in~\cite{brugnoli2023stokes}. Since 2002, the body of literature on distributed pH systems has grown significantly, encompassing both theoretical developments and application-based studies (see~\cite{rashad2020twenty} for a comprehensive review).

%Port-Hamiltonian systems (pHs) have been developed for the modelling, (co-)simulation and control of
%complex multiphysics systems (see~\cite{van2014port} for an introductive textbook or~\cite{duindam2009modeling} for applications of the approach to various domains, including mechanics, electromagnetism and irreversible thermodynamics). This framework has been extended to the case of distributed parameter systems (DPS) with boundary energy flows in the seminal paper~\cite{van2002hamiltonian}. The semi-group approach, when applied to linear distributed parameter pHs, gives rise to particularly elegant and practical results in the 1D case (see~\cite{jacob2012linear} for an introduction), this has been recently extended to $n$D systems in~\cite{brugnoli2023stokes}. Since 2002, the literature on distributed pHs has grown considerably, with both theoretical and application papers (see~\cite{rashad2020twenty} for a review).

PH systems with algebraic constraints have also been explored, leading to the development of finite-dimensional pH Differential-Algebraic Equation (pH-DAE, see for instance~\cite{beattie2018linear} for linear descriptor systems in matrix representation,~\cite{Zwart_2024} for abstract dissipative Hamiltonian DAE in infinite dimension, and~\cite{mehrmann2023control} for a review on control applications). These arise naturally from the interconnection of subsystems, which results in constraints between effort and flow variables in the underlying Dirac structure. Alternatively, constraints may stem from implicit energy constitutive equations, affecting the relationship between effort and energy state variables in a Lagrangian submanifold (see~\cite{van2018generalized} and~\cite{van2020dirac} for nonlinear cases).

%Port-Hamiltonian system dynamics with algebraic constraints have been considered as well, leading to finite-dimensional port-Hamiltonian Differential-Algebraic Equations systems (pH-DAEs, see for instance~\cite{beattie2018linear} for linear descriptor systems in matrix representation or~\cite{mehrmann2023control} for a review on control applications). From a modeling or geometric perspective, these algebraic constraints arise either from interconnection of the subsystems composing the overall system, resulting in constraints between effort and flow variables in the underlying Dirac structure, or by implicit energy constitutive equations, resulting in constraints between effort and energy state variables assigned to live in some Lagrangian submanifold (see~\cite{van2018generalized} or~\cite{van2020dirac} for the nonlinear case). 

Recently, there has been increasing interest in distributed parameter models represented in implicit form within the pH framework. These models include examples such as the implicit formulation of the Allen-Cahn equation~\cite{yaghi2022port}, the Dzektser equation governing the seepage of underground water~\cite{jacob2022solvability}, and the nonlocal visco-elastic models for nanorods~\cite{heidari2019port,heidari2022nonlocal}. Accompanying these advances, structure-preserving numerical methods have been proposed (see~\cite{bendimerad2023implicit}). The introduction of boundary control pH systems has been extended to systems where the variational derivative of the Hamiltonian is replaced by reciprocal operators, as shown in~\cite{maschke2023linear}. This generalization has led to the definition of a boundary pH system on a Stokes-Lagrange subspace, allowing the representation of previously discussed implicit models, such as the elastic rod with non-local elasticity.

When working with distributed pH systems governed by Partial Differential Equations (PDEs) on a spatial domain, special attention must be given to boundary variables. Specifically, Stokes-Dirac structures are employed to account for boundary power ports, while Stokes-Lagrange structures are used to describe boundary energy ports (see~\cite{krhac2024port} and~\cite{schaft2024boundary}). The explicit formulation of these structures plays a crucial role in ensuring energy-conserving behavior within the system dynamics. Field theory has been used to study different formulations of port-Hamiltonian systems; see \cite{schoberl2011first} for an introduction. This approach allows for different representations of pH systems \cite{SchoberlSiukaECC2013,scholberl2014auto} by considering different definitions of the energy variables.

%When dealing with distributed pH systems, \textit{i.e.}, with dynamics described by partial differential equations (PDE) on a spatial domain, boundary variables have to be carefully included in the modeling process. Specifically, Stokes-Dirac structures are employed for obtaining boundary power ports, while Stokes-Lagrange structures are employed for obtaining boundary energy ports (see~\cite{krhac2024port} and~\cite{schaft2024boundary}). Such formulation in explicit form has been studied in~\cite{malzer2021,schoberl2011first,SchoberlSchlacherMathMod2015,scholberl2018bd,SchoberlSiukaECC2013,scholberl2014auto} using field theory.

This paper aims to extend the preliminary work presented in~\cite{bendimerad2024implicit} and achieve three objectives. Firstly, it defines the Stokes-Lagrange structure by following the methodology of~\cite{kurula2010dirac} on Dirac structures and  extending the Stokes-Lagrange subspaces presented in \cite{maschke2023linear} to $N$-dimensional domains. Secondly, it presents examples of 2D and 3D systems that admit both Stokes-Dirac and Stokes-Lagrange representations; \textit{i.e.} systems that can be written using a Stokes-Dirac or a Stokes-Lagrange structure. Thirdly, it discusses the differences and relations between these two structures and how the newly defined Stokes-Lagrange structure can be useful for modelling, numerics and control. In particular, it is shown how the Stokes-Lagrange structure allows  for alternate control laws using \emph{energy} control port instead of \emph{power} control port.

The paper is organized as follows: in Section~\ref{sec:intro}, the main ideas of the article are presented using a simple model, namely the 1D linear wave equation; in particular how different representations are obtained considering either Stokes-Dirac or Stokes-Lagrange representation and how implicit constitutive relations can be taken into account. In Section~\ref{sec:Stokes-Lagrange}, the definitions of a Stokes-Dirac and a Stokes-Lagrange structure are presented. These structures and their relations are then illustrated in Section~\ref{sec:examples} through a series of examples: the Reissner-Mindlin thick plate model and the Kirchhoff-Love thin plate model; the Maxwell's equation in classical and vector potential formulation and finally a dissipative nonlocal case,  the Dzektser equation. For the plate models and Maxwell's equations, two representations are presented using either Stokes-Dirac or Stokes-Lagrange structures.
In Section~\ref{sec:control}, applications to boundary and distributed control using \emph{energy} control ports are presented; in particular it is shown how the energy control ports allows for interconnections using nonseparable Hamiltonian functionals.
Finally, 5 appendices are to be found at the end of this paper which detail more precisely the operator transposition procedure~\ref{apx:operator-transposition},  Maxwell reciprocity conditions~\ref{apx:maxwell-reciprocity}, the Stokes-Lagrange structure~\ref{apx:stokes-lagrange} and the last two contain proofs of variational derivatives~\ref{apx:var-deriv} and Stokes' identities~\ref{apx:stokes-identities}.

\subsection{Study of a simple case: the 1D wave equation}

Let us begin with the 1D linear wave equation. This equation has already been thoroughly studied in the pH community~\cite{van2002hamiltonian,van2014port,haine2023numerical,maschke2023linear}, its relative simplicity will allow us to introduce the main ideas of the paper.

Let $[a,b] \subset \mathbb{R}$ be an interval, and consider $w(t,x)$ the vertical displacement of a string over this interval, then the wave equation reads~\cite{van2002hamiltonian}:
\begin{equation} \label{eq:1D-Wave}
    \partial_t (\rho \, \partial_t w) = \partial_x (E \, \partial_x w),
\end{equation}
with $\rho(x)>0$ the density and $E(x)>0$ the tension of the string. A Hamiltonian associated to this equation is the total mechanical energy given by the sum of the kinetic and elastic energies:
\begin{equation} \label{eqn:ham-wave}
    H(w) := \frac{1}{2}  \int_a^b \rho (\partial_t w)^2 + E (\partial_x w)^2 \, \d x \, .
\end{equation}

\subsubsection{The classical pH representation}

To write~\eqref{eq:1D-Wave} as a pH system, let us follow~\cite{van2002hamiltonian}. Choose \emph{energy variables} to write the Hamiltonian: $p := \rho \, \partial_t w$ the linear momentum, and $\varepsilon := \partial_x w$ the strain. Then the Hamiltonian~\eqref{eqn:ham-wave} becomes\footnote{The upper-scripts $\cdot^{SD}$ and $\cdot^{SL}$ refer to the Stokes-Dirac or Stokes-Lagrange representation of the system. The sub-script $\cdot_W$ refers to the Wave equation.}: 
\begin{equation} \label{eqn:ham-wave-explicit}
    H_W^{SD}(\varepsilon, p) = \frac{1}{2}  \int_a^b \frac{1}{\rho}\, p^2 + E\, \varepsilon^2 \, \d x \, .
\end{equation}
The \emph{co-state} variables are then computed as the variational derivatives~\cite{olver1993applications} of the Hamiltonian~\eqref{eqn:ham-wave-explicit} with respect to the state variables $\varepsilon, p$, which yields: $v := \delta_p H_W^{SD} = \frac{1}{\rho} \, p$ the velocity, and $\sigma = \delta_\varepsilon H_W^{SD} = E \, \varepsilon$ the stress. Then, the classical pH formulation of the 1D linear wave equation~\eqref{eq:1D-Wave} reads:
\begin{equation} \label{eqn:wave-equation-explicit-1}
    \frac{\partial}{\partial t}\begin{bmatrix}
        \varepsilon \\
        p
    \end{bmatrix} = \begin{bmatrix}
        0 & \partial_x \\ \partial_x & 0
    \end{bmatrix} \begin{bmatrix}
        \sigma \\ v
    \end{bmatrix},
\end{equation}
together with the constitutive relations, namely Hooke's law and the definition of linear momentum:
\begin{equation} \label{eqn:wave-equation-explicit-2}
    \begin{bmatrix}
        \sigma\\
        v
    \end{bmatrix}
    = \begin{bmatrix}
        E & 0 \\ 0 & \frac{1}{\rho}
    \end{bmatrix} \begin{bmatrix}
        \varepsilon \\ p
    \end{bmatrix}.
\end{equation}
In order to identify the boundary power port, one can compute the \emph{power balance} $\frac{\rm d}{{\rm d}t} H_W^{SD}$ along the trajectory of a solution, which yields:
\begin{equation} \label{eq:wave-1D-power-balance-explicit}
    \forall \, t>0, \quad\frac{\rm d}{{\rm d}t} H_W^{SD}(\varepsilon(t),p(t)) = [v \, \sigma]_a^b = \sigma(t,b) v(t,b) - \sigma(t,a) v(t,a) \, ,
\end{equation}
namely the product of the force and the velocity at both boundaries. 

\begin{remark}
    Denoting the flow variables as $f = (- \partial_t \varepsilon, - \partial_t p )^\top$ and   $f_\partial = (-v(t,a), v(t,b))^\top$; the effort variables: $e = (\sigma, v)^\top$ and $e_\partial = (\sigma(t,a), \sigma(t,b))^\top$; equation~\eqref{eq:wave-1D-power-balance-explicit} equivalently rewrites: $\langle e, f\rangle + e_\partial \cdot f_\partial = 0$. This shows the power preserving nature of this system, power is either put or removed at the boundary through the \emph{boundary power port} $(f_\partial,e_\partial)$; this motivates the definition of Stokes-Dirac structures, as will be enlightened in Section~\ref{sec:Stokes-Lagrange}. 
\end{remark}

A choice of state variables was made in \eqref{eqn:ham-wave-explicit}, leading to the Hamiltonian given as an explicit functional of the state variables. However, this choice is not canonical. Hence, two questions naturally arise: What could a different choice of variables (\textit{i.e.}, representation) yield? And what happens when the Hamiltonian is not an explicit functional of the state variables anymore? 

\subsubsection{A different representation}

Let us now choose a different set of variables, \textit{e.g.}, the displacement $w$ and the linear momentum $p$, then the Hamiltonian~\eqref{eqn:ham-wave} reads:
\begin{equation}\label{eqn:ham-wave-implicit}
    H_W^{SL}(w,p) = \frac{1}{2} \int_a^b \frac{1}{\rho}p^2 + E (\partial_x w)^2 \, \d x \, .
\end{equation}
Computing the co-state variables yields $N := \delta_w H_W^{SL} = - \partial_x \, (E \, \partial_x w)$, the stress resultant, and $v := \delta_p H_W^{SL} = \frac{1}{\rho} \, p$, the velocity (see~\ref{apx-subsec:wave-varder-lagrange} for a detailed proof), which gives us the following pH system:
\begin{equation} \label{eqn:wave-equation-implicit-1}
    \frac{\partial}{\partial t}\begin{bmatrix}
        w \\
        p
    \end{bmatrix} = \begin{bmatrix}
        0 & 1 \\ -1 & 0
    \end{bmatrix} \begin{bmatrix}
        N \\ v
    \end{bmatrix},
\end{equation}
together with the constitutive relations:
\begin{equation} \label{eqn:wave-equation-implicit-2}
    \begin{bmatrix}
        N \\ v
    \end{bmatrix}
    =
    \begin{bmatrix}
        - \partial_x ( E \partial_x \cdot ) & 0 \\
        0 & \frac{1}{\rho}
    \end{bmatrix}
    \begin{bmatrix}
        w \\ p
    \end{bmatrix}.
\end{equation}

\begin{remark}
    With this representation~\eqref{eqn:wave-equation-implicit-1}--\eqref{eqn:wave-equation-implicit-2}, the differential operator $\partial_x$ appears in the Hamiltonian~\eqref{eqn:ham-wave-implicit}, hence in the constitutive relations~\eqref{eqn:wave-equation-implicit-2}, and not in the structure matrix operator~\eqref{eqn:wave-equation-implicit-1}, as in~\eqref{eqn:wave-equation-explicit-1}. This will require a broader notion of a Hamiltonian and be further discussed in Section~\ref{sec:Stokes-Lagrange}.
\end{remark}
Finally, one can compute the power balance as follows:
\begin{equation} \label{eq:wave-1D-power-balance-implicit}
    \forall \, t>0, \quad\frac{\rm d}{{\rm d}t} H_W^{SL}(w,p) = [ (\partial_t w) \, E (\partial_x w)]_a^b \, .
\end{equation}
%\red{Hamiltonian depends on energy variables, not time $t$. See~\eqref{eq:wave-1D-power-balance-explicit}.}

\begin{remark} \label{rmk:power-balance-wave-equation}
The physical interpretation of this power balance~\eqref{eq:wave-1D-power-balance-implicit} is identical to that of the first representation~\eqref{eq:wave-1D-power-balance-explicit}, however the mathematical objects used are different (namely, the time derivative of a state variable and the Neumann trace of a state variable, instead of Dirichlet and normal traces of co-state variables). These boundary ports do not fit in the Stokes-Dirac framework anymore, as they are not functions of the effort variables $(N, v)$ but of the state variable $w$. Hence, an extension of the pH framework has to be settled. This has been done for a class of differential operators on 1D domains in~\cite{maschke2023linear} by using Lagrangian subspaces. The present work proposes an extension to $N$-dimensional domains, and by analogy with Stokes-Dirac structures, it will lead to the definition of Stokes-Lagrange structures.
\end{remark}

\subsubsection{Implicit Hamiltonian}

Another instance where the Stokes-Lagrange structure will prove beneficial involves certain constitutive relations, particularly those that are non-local.
Let us consider the representation~\eqref{eqn:wave-equation-explicit-1}, and say that one is studying a nanorod onto which the stress is nonlocal, \textit{i.e.}, the stress not only depends on the local strain but also on other points of the domain. In Eringen's original paper~\cite{eringen1983differential}, nonlocal constitutive relations are firstly presented as a kernel operator:
$$
    \sigma(t,x) = \int_a^b \alpha(|x-x'|) E \, \varepsilon(t,x') \d x' \, ,
$$
with the kernel $\alpha(x) := \frac{1}{2\sqrt{\mu}} e^{-\frac{1}{\sqrt{\mu}} x}$ and $\mu>0$; $\sqrt{\mu}$ being the characteristic length of the nonlocal effects. Using a Fourier transform and assuming that $\sqrt{\mu} <\! \!< (b-a)$ one gets~\cite{eringen1983differential}:
$$
    (1- \mu \partial_{x^2}^2) \, \sigma = E \, \varepsilon \, .
$$
Note that these formulations are equivalent if one considers that both variables vanish at the boundary and outside $[a,b]$.
\paragraph{Modelling aspects}It turns out that on top of some functional analysis argument (allows for the study of a partial differential equation instead of an integro-partial differential equation), this formulation is much better numerically as it will be shown in the following. Such a constitutive relation is given in \textit{e.g.}~\cite{eringen1983differential,heidari2019port} and leads to the constitutive relations:
\begin{equation}\label{eqn:stress-nonlocal}
    \begin{bmatrix}
        (1 - \mu \, \partial_{x^2}^2) & 0 \\
        0 & I
    \end{bmatrix} \begin{bmatrix}
        \sigma \\ v
    \end{bmatrix} = \begin{bmatrix}
        E & 0 \\ 0 & \frac{1}{\rho}
    \end{bmatrix} \begin{bmatrix}
        \varepsilon \\ p
    \end{bmatrix},
\end{equation}
with $\mu > 0$ the nonlocality parameter.

This constitutive relation is implicit in the sense that an (unbounded) operator is to be found on the left-hand side of~\eqref{eqn:stress-nonlocal}, \textit{i.e.}, in front of the co-state variables. Hence, the Hamiltonian is not an explicit function of the state variables. Indeed, the co-state variables are, by definition, the variational derivatives of the Hamiltonian with respect to the state variables. Then, the presence of the operator on the left-hand side of~\eqref{eqn:stress-nonlocal} corresponds to, roughly speaking, $\delta_\varepsilon H = (1-\mu \partial_{x^2}^2)^{-1} E \varepsilon$ -- one should be cautious about boundary terms in the existence and definition of an inverse here --, which does determine the Hamiltonian $H$, but not in a straightforward manner, or in other words, implicitly.

To solve this issue, two possibilities have been explored: extend the flow and effort space in order to put this relation as a constraint in the Dirac structure~\cite{heidari2022nonlocal}, or use a Lagrange subspace to define the Hamiltonian~\cite{maschke2023linear}. In this article, we will focus on the latter, by first extending this setting to $N$-dimensional spatial domains (which will then be called a Stokes-Lagrange structure) and present some examples.

When using such an approach, it is useful to define the \emph{latent state space}. In the above linear 1D wave example, the latent deformation $\lambda$ is defined by $(1-\mu \partial_{x^2}^2) \lambda =  \varepsilon$.
\begin{remark}
    Note that as the parameter $\mu$ tends to zero, the relation \eqref{eqn:stress-nonlocal} into the classical local stress-strain relation~\eqref{eqn:wave-equation-explicit-2} (\textit{i.e.}, Hooke's law), this property is rather important when dealing with such systems, in particular for numerics where different model parametrization might require very small values of $\mu$. In particular, when using the representation proposed in~\cite{heidari2022nonlocal}, this property is lost: the relation diverges to infinity as this parameter tends to zero.
\end{remark}

\paragraph{Numerical aspects} Let us now discuss the advantages of such an implicit relation over an explicit nonlocal relation. Let $(\phi_i)_{i \in [1,N]}$ be a family of $\mathbb P^1$ Lagrange finite elements over $[a,b]$, then defining $\sigma^d(t,x) := \sum_{i} \phi_i(x) \overline{\sigma}_i(t)$ and $\varepsilon^d(t,x) := \sum_{i} \phi_i(x) \overline{\varepsilon}_i(t)$, one gets:
$$
    \forall \, i \in [1,N], \quad \sum_j \underbrace{\int_a^b \phi_i \phi_j}_{M_{i,j}} \, \overline{\sigma} = \sum_j \underbrace{\int_a^b\int_a^b \alpha(|x-x'|) \phi_i(x) E \phi_j(x') \d x \d x'}_{K^\alpha_{i,j}} \overline{\varepsilon} \, ,
$$
$$
    \forall \, i \in [1,N], \quad \sum_j \left[\int_a^b \phi_i \phi_j +  \underbrace{\int_a^b (\partial_x \phi_i \mu \partial_x \phi_j)}_{K^\mu_{i,j}}\right] \,\overline{\varepsilon} = \sum_j \underbrace{\int_a^b \phi_i E \phi_j}_{K^{E}_{i,j}} \, \overline{\varepsilon} \, .
$$
More compactly we get:
$$
\begin{aligned}
        M \, \overline{\sigma} =& K^\alpha \, \overline{\varepsilon} \, , & \quad \text{(Explicit formulation)}, \\
        (M +  K^\mu) \, \overline{\sigma} =& K^{E} \, \overline{\varepsilon} \, , &\quad \text{(Implicit formulation)}.
\end{aligned}
$$
Then, in both cases a linear system has to be solved (since $M \neq I_N$ and $ (M + K^\mu) \neq I_N$), however in the implicit case, the matrices are sparse whereas in the explicit case, a double integration has to be carried (which slows the assembly of the matrix) and $K^\alpha$ is \textbf{full}, which greatly degrades the numerical simulation CPU time and memory usage (see Figure~\ref{fig:impex-comparison}). In particular, the memory usage increases ($O(N^2)$ nonzero components instead of $O(N)$).
\begin{figure}[ht]
    \centering
    \includegraphics[width=.7\textwidth]{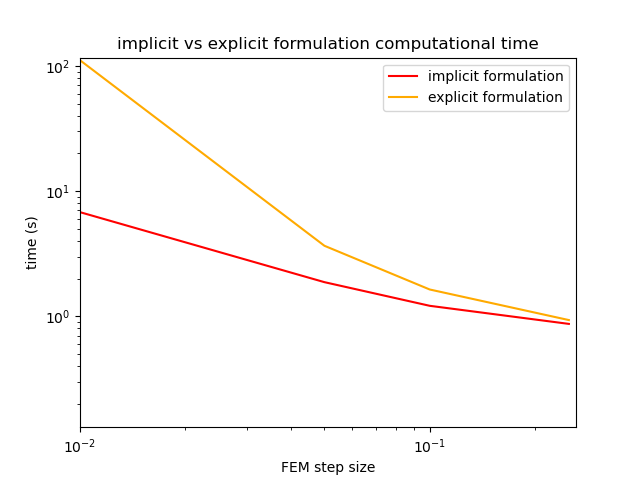}
    \caption{Comparison of the assembly + solving time of the implicit vs explicit formulations.}
    \label{fig:impex-comparison}
\end{figure}

\subsection{Operator transposition} \label{subsec:operator-transposition}

As presented in the previous subsection, the Stokes-Lagrange structure allows us to describe implicit constitutive relations. Let us now show that this Stokes-Lagrange structure gives us different choices of representations and that one can choose whether the differential operators are to be found in the Dirac structure or in the Hamiltonian~\cite{SchoberlSiukaECC2013}.  Throughout the article, given an operator $A$ we will denote by $A^*$ the adjoint of $A$~\cite{RudinWalter1921}.

We will consider the general case as follows (detailed assumptions and proofs are provided in~\ref{apx:operator-transposition}):
let us consider a spatial domain $\Omega \subset \mathbb{R}^N$, and the following wave-like system:
\begin{equation} \label{eqn:sys-K-matrix}
    \begin{cases}
        \partial_t \alpha_1 = \bm{\kappa} \alpha_2,\\
        \partial_t \alpha_2 = - \bm{K}^\dag(\bm{\eta} \bm{K} \alpha_1), 
    \end{cases}
\end{equation}
with $\alpha_1,\alpha_2$ of size $n$ and $\bm{\eta},\bm{\kappa} $ some bounded from above operators of appropriate sizes. $\bm{K}$ is a closed and densely-defined operator $\bm{K}: \mathcal{D}(\bm{K}) \subset L^2(\Omega, \mathbb{R}^n) \rightarrow L^2(\Omega, \mathbb{R}^m)$ satisfying a Stokes' identity. More precisely, for all $\phi \in D(\bm{K}), \, \psi \in D(\bm{K}^\dag)$:
$$
\int_\Omega \bm{K} (\phi) \cdot \psi \, \d x
=
\int_\Omega \phi \cdot \bm{K}^\dag (\psi) \, \d x
+ \left\langle  \beta(\psi), \gamma(\phi) \right\rangle_{\mathcal{U}, \mathcal{Y}},
$$
$\bm{K}^\dag$ being the \emph{formal adjoint}\footnote{$\bm{K}^\dag$ is the adjoint of $\bm{K}$ ``up to boundary terms" \textit{i.e.} it is the adjoint of $\bm{K}$ in the subspace $ \ker(\gamma) \subset D(\bm{K})$, more precisely $(\bm{K}_{|\ker(\gamma)})^* = \bm{K}^\dag$ and $(\bm{K}^\dag_{|\ker(\beta)})^* = \bm{K}$.}  of $\bm{K}$, $\mathcal{U}$ and $\mathcal{Y} := \mathcal{U}'$ the control and observation subspaces and $\gamma:D(\bm{K})\rightarrow\mathcal{Y}$ and $\beta:D(\bm{K}^\dag)\rightarrow\mathcal{U}$ the boundary operators.  
The associated Hamiltonian of \eqref{eqn:sys-K-matrix} is:
$$
    H := \frac{1}{2}\int_\Omega  (\bm{K} \alpha_1) \cdot \bm{\eta} (\bm{K} \alpha_1) + \alpha_2 \cdot \bm{\kappa} \, \alpha_2 \, \d x\geq 0 \, . 
$$
Now, in order to represent \eqref{eqn:sys-K-matrix} as a pH system, one can choose as energy variable either  $(\alpha_1^{SD},\alpha_2^{SD}) = (\bm{K}\alpha_1, \alpha_2)$ which removes the operator from the Hamiltonian, or one can choose $(\alpha_1^{SL},\alpha_2^{SL}) = (\alpha_1,\alpha_2)$. This yields the two following pH systems:
\begin{equation} \label{eqn:stokes-dirac-sys-k}
        \begin{bmatrix}
            \partial_t \alpha^{SD}_1 \\
            \partial_t \alpha^{SD}_2
        \end{bmatrix} = \underbrace{\begin{bmatrix}
            0 &  \bm{K}  \\ -\bm{K}^\dag & 0
        \end{bmatrix}}_{=: J^{SD}} \begin{bmatrix}
            e_1^{SD} \\ e_2^{SD}
        \end{bmatrix}, \quad
    \begin{cases}
        e^{SD}_1 =\bm{\eta}\, \alpha_1^{SD}, \\
        e^{SD}_2 =  \bm{\kappa} \, \alpha_2^{SD},
    \end{cases}
\end{equation}
being called the \emph{Stokes-Dirac representation}, and:
\begin{equation}\label{eqn:stokes-lagrange-sys-k}
        \begin{bmatrix}
            \partial_t \alpha^{SL}_1 \\
            \partial_t \alpha^{SL}_2
        \end{bmatrix} = \underbrace{\begin{bmatrix}
            0 & \Id \\ - \Id & 0
        \end{bmatrix}}_{=: J^{SL}} \begin{bmatrix}
            e_1^{SL} \\ e_2^{SL}
        \end{bmatrix}, \quad
        \begin{cases}
        e^{SL}_1 = \bm{K}^\dag( \bm{\eta}  \bm{K} \alpha_1^{SL}), \\
        e^{SL}_2 = \bm{\kappa} \, \alpha_2^{SL},   
        \end{cases}
\end{equation}
being called the \emph{Stokes-Lagrange representation}. 
\begin{remark}
    We call a representation of a pH system \emph{Stokes-Dirac representation} when the differential operators are only present in the structure matrix; and we call it \emph{Stokes-Lagrange representation} when the differential operators are only present in the Hamiltonian. Note that some systems (\textit{e.g.} the nanorod equation) possess differential operators both in the Hamiltonian and the structure matrix~\cite{maschke2023linear}. Precise definitions and details about these two structures are given in Section~\ref{sec:Stokes-Lagrange}.
\end{remark}
Finally, defining $G$ along with its formal adjoint $G^\dag$ as: 
$$
G = \begin{bmatrix}
    \bm{K} & 0 \\
    0 & \Id
\end{bmatrix}, \qquad
    G^\dag = \begin{bmatrix}
    \bm{K}^\dag & 0 \\
    0 & \Id
\end{bmatrix},
$$
we get the following theorem:
\begin{theorem} \label{thm:dirac-lagrange-equiv} The Stokes-Dirac \eqref{eqn:stokes-dirac-sys-k} and Stokes-Lagrange \eqref{eqn:stokes-lagrange-sys-k} representations of the system \eqref{eqn:sys-K-matrix} satisfy the following equalities:
    \begin{equation} \label{eqn:operator-transposition}
        \begin{cases}
            \alpha^{SD} = G \alpha^{SL}, \\
            G^\dag e^{SD} = e^{SL}, \\
            GJ^{SL}G^\dag = J^{SD}.
        \end{cases}
    \end{equation}
    They are then said to be equivalent.
\end{theorem}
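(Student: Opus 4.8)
The plan is to prove all three identities by direct substitution of the definitions of the energy and effort variables that precede the statement, since both representations are built from the same system~\eqref{eqn:sys-K-matrix}; the only genuine work lies in tracking the domains of the unbounded operators.

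First I would establish the state-space identity. By definition the Stokes-Dirac energy variable is $\alpha^{SD} = (\bm{K}\alpha_1, \alpha_2)$ while the Stokes-Lagrange one is $\alpha^{SL} = (\alpha_1, \alpha_2)$. Applying the block-diagonal operator $G$ to $\alpha^{SL}$ and using its definition gives $G \alpha^{SL} = (\bm{K}\alpha_1, \alpha_2) = \alpha^{SD}$, which is the first line of~\eqref{eqn:operator-transposition}. For the effort identity I would substitute the Stokes-Dirac constitutive laws $e_1^{SD} = \bm{\eta}\,\bm{K}\alpha_1$ and $e_2^{SD} = \bm{\kappa}\,\alpha_2$ and apply $G^\dag$ componentwise, obtaining $G^\dag e^{SD} = (\bm{K}^\dag(\bm{\eta}\,\bm{K}\alpha_1), \bm{\kappa}\,\alpha_2)$; comparing with the Stokes-Lagrange constitutive laws $e_1^{SL} = \bm{K}^\dag(\bm{\eta}\,\bm{K}\alpha_1^{SL})$ and $e_2^{SL} = \bm{\kappa}\,\alpha_2^{SL}$ and recalling $\alpha^{SL} = (\alpha_1, \alpha_2)$ yields the second line.

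The structure identity is a block-matrix computation: multiplying out
\[
G J^{SL} G^\dag = \begin{bmatrix} \bm{K} & 0 \\ 0 & \Id \end{bmatrix} \begin{bmatrix} 0 & \Id \\ -\Id & 0 \end{bmatrix} \begin{bmatrix} \bm{K}^\dag & 0 \\ 0 & \Id \end{bmatrix} = \begin{bmatrix} 0 & \bm{K} \\ -\bm{K}^\dag & 0 \end{bmatrix} = J^{SD},
\]
which is the third line. As a consistency check one verifies that feeding the constitutive laws of each representation back into its own structure operator reproduces~\eqref{eqn:sys-K-matrix}, so that both genuinely describe the same dynamics.

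The main obstacle is not the algebra but the operator-theoretic bookkeeping: $\bm{K}$ is only densely defined and closed, and $\bm{K}^\dag$ is its formal adjoint defined through the Stokes identity with the nonzero boundary pairing $\langle \beta(\psi), \gamma(\phi)\rangle_{\mathcal{U},\mathcal{Y}}$. I would therefore state the three identities as operator equalities valid on the domain where every composition makes sense---requiring $\alpha_1 \in D(\bm{K})$, $\bm{\eta}\,\bm{K}\alpha_1 \in D(\bm{K}^\dag)$, and correspondingly that $G^\dag$ maps the domain of $J^{SD}$ into that of $J^{SL}$. I expect the boundary terms to play no role in these three internal identities, since they involve no integration by parts, but they must be revisited when interpreting the word \emph{equivalent} at the level of the boundary power ports, where the Stokes identity is precisely what guarantees that the two boundary pairings coincide.
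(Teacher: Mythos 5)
Your proof is correct and follows essentially the same route as the paper's (Appendix~\ref{apx:operator-transposition-representation}): direct substitution of the definitions of $\alpha^{SD}$, $\alpha^{SL}$, the constitutive laws, and a block-matrix computation for $GJ^{SL}G^\dag = J^{SD}$. Your added remarks on operator domains and where the Stokes identity actually matters are a sensible refinement the paper leaves implicit, but they do not change the argument.
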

\begin{proof}
    See~\ref{apx:operator-transposition-representation}.
\end{proof}
\begin{remark}
    Note that the position of $G$ and $G^\dag$ are not the same between line one and two of \eqref{eqn:operator-transposition}. From a geometrical viewpoint, this can be explained by the fact that state variables are usually described as vectors whereas effort variables are described as covectors; leading to contravariant and covariant transformations respectively.
\end{remark}
\begin{remark}
    Note that $\bm{K}$ may have a nonzero kernel. Then, when using the Stokes-Dirac representation, a linear subspace of the state space is removed from the representation, namely $\ker(\bm{K})$ and by the definition of the Hamiltonian, this subspace exactly corresponds to states that have zero energy: $\forall \alpha \in \ker({\bm{K}}), \quad  \bm{K} \alpha \cdot \bm{\eta} \bm{K} \alpha = 0$. This will be commented in Example~\ref{example:wave-operator-transposition} where the rigid body motion is not taken into account in the Stokes-Dirac representation.
\end{remark}

\begin{remark}
    This procedure has been presented here using an operator theory point of view which translates directly to vector calculus; it has also been carried from a geometric point of view using jet extension in~\cite{preuster2024jet,SchoberlSiukaECC2013}.
\end{remark}

\begin{example}(Wave equation - N-dimensional case) \label{example:wave-operator-transposition}

Let us consider the wave equation on $\Omega \subset  \mathbb{R}^n$:
$$
    \partial_t(\rho  \, \partial_{t} w) = \diver(E \, \grad(w)) \, ,
$$
with the Hamiltonian $H = \frac{1}{2} \int_\Omega \rho \, (\partial_t w)^2 + E \, |\grad(w)|^2 \, \d x$.
Defining $\varepsilon := \partial_x w$ and $p := \rho \, \partial_t w$, one deduces the co-energy variables $\sigma = E \, \varepsilon$ and  $v = \frac{1}{\rho} p$ and gets the following Stokes-Dirac formulation:
$$
    \partial_t \begin{bmatrix}
        \varepsilon \\
        p
    \end{bmatrix} = \begin{bmatrix}
        0 & \grad \\ \diver & 0
    \end{bmatrix} \begin{bmatrix}
        \sigma \\ v
    \end{bmatrix}.
$$
Using directly $w$ and $p$, one can then write the Stokes-Lagrange representation as:
$$
    \partial_t \begin{bmatrix}
        w \\ p
    \end{bmatrix} = \begin{bmatrix}
        0 & 1\\ -1 & 0
    \end{bmatrix} \begin{bmatrix}
        \delta_w H \\ \delta_p H
    \end{bmatrix}, \quad \text{with} \quad \begin{bmatrix}
        \delta_w H \\ \delta_p H
    \end{bmatrix} = \begin{bmatrix}
        - \diver(E \grad(w))  \\ \frac{1}{\rho}p
    \end{bmatrix} .
$$
In particular, let us denote the transposition operator:
$$
G = \begin{bmatrix}
    \grad & 0 \\ 0 & \Id
\end{bmatrix}, \qquad
G^\dag = \begin{bmatrix}
    - \diver & 0\\
    0 & \Id
\end{bmatrix}, 
$$
which yields:
$$ 
\begin{bmatrix}
    \varepsilon \\ p
\end{bmatrix} = G \begin{bmatrix}
    w \\ p 
\end{bmatrix}, \qquad  
G^\dag \begin{bmatrix}
    \sigma \\ v
\end{bmatrix} = \begin{bmatrix}
    \delta_w H \\ v
\end{bmatrix}.
$$
\end{example}
\begin{remark}
Note that the kernel of $G$ is precisely the constant displacements: given $c\in\mathbb{R}$, $G \begin{bmatrix}
    c & 0
\end{bmatrix}^\top = 0$; hence in the Stokes-Dirac representation of the wave equation, the rigid body motion is lost~\cite{SchoberlSchlacherMathMod2015}. However, this is not the case if one extends the original state variables $(w,p)$ with the first jet $\partial_xw = \varepsilon$ while keeping the displacement $w$ in the state variable (see in particular~\cite{preuster2024jet}).
\end{remark}

To conclude and motivate the following section, let us summarize the important properties highlighted in the linear wave equation example.

\subsection{Summary and main results}

The pH formulation of the wave equation exhibits the following properties: this system can be decomposed into two structures; one that defines the energy linked to the state variables, and one that describes how this energy is exchanged between the ports. The former is usually called the Hamiltonian and the latter the Dirac structure. 

Let us emphasize once again that the usual Hamiltonian definition is not sufficient to describe implicit relations and does not take into account boundary terms when differential operators are to be found in the Hamiltonian.

Hence, as in the case of the Dirac structure, we need to extend the energy definition as a Hamiltonian to a so-called Lagrange subspace to account for implicit relations and as in the  case of the Stokes-Dirac structure~\cite{kurula2010dirac}, we need to define a special case of the Lagrange subspace, namely a Stokes-Lagrange structure to include boundary terms~\cite{maschke2023linear} when differential operators are present.

\section{Stokes-Dirac and Stokes-Lagrange structures}
\label{sec:Stokes-Lagrange}

The Stokes-Dirac structure is already well known in the pH community, and we will simply recall the basic facts presented in~\cite{kurula2010dirac} that will motivate our choices regarding the Stokes-Lagrange structure. The Stokes-Lagrange structure has only been studied on 1D domains with a restricted set of operators~\cite{maschke2023linear}, here we will extend it to $N$-dimensional domains. To do so, let us firstly define the Dirac and Lagrange structures.

\subsection{Dirac and Lagrange structures}
Let us consider a Hilbert space $\mathcal{P}$, then define the effort space $\mathcal{X} = \mathcal{F} \subset \mathcal{P}$ as a dense Hilbert subspace and the flow and state space as $ \mathcal{E} = \mathcal{F}'$, the dual of $\mathcal{F}$ with respect to the pivot space $\mathcal{P}$.
\begin{remark}
    Identifying the state and flow spaces $\mathcal{X}$ and $\mathcal{F}$ is only possible in the linear case (where we currently restrain ourselves), in the nonlinear case, given a manifold $\mathcal{M}$, the Dirac structure is defined on $T\mathcal{M} \bigoplus T^* \mathcal{M}$ and the Lagrange submanifold on $T^* \mathcal{M}$ (see \emph{\cite{krhac2024port,van2018generalized,van2020dirac}} for details).
\end{remark}
Now following~\cite{maschke2023linear} let us define the \emph{plus pairing} and \emph{minus pairing} as:
$$
\begin{aligned} \forall \begin{bmatrix}
        f_1 \\ e_1
    \end{bmatrix}, \begin{bmatrix}
        f_2 \\ e_2
    \end{bmatrix} \in \mathcal{F} \times \mathcal{E}, \; 
    \left\langle  \left\langle \begin{bmatrix}
        f_1 \\ e_1
    \end{bmatrix}, \begin{bmatrix}
        f_2 \\ e_2
    \end{bmatrix} \right\rangle \right\rangle_+  = \langle e_2,f_1\rangle_{\mathcal{E},\mathcal{F}} + \langle e_1,f_2\rangle_{\mathcal{E},\mathcal{F}} \, , \\
    \forall \begin{bmatrix}
        x_1 \\ e_1
    \end{bmatrix}, \begin{bmatrix}
        x_2 \\ e_2
    \end{bmatrix} \in \mathcal{X} \times \mathcal{E}, \; 
    \left\langle  \left\langle \begin{bmatrix}
        x_1 \\ e_1
    \end{bmatrix}, \begin{bmatrix}
        x_2 \\ e_2
    \end{bmatrix} \right\rangle \right\rangle_-  = \langle e_2,x_1\rangle_{\mathcal{E},\mathcal{X}} - \langle e_1,x_2\rangle_{\mathcal{E},\mathcal{X}} \, .
\end{aligned}
$$
Then, defining the orthogonal of a subspace with respect to the plus pairing as $\perp_+$, \textit{i.e.}: 
$$
    \forall \mathcal{Q}\subset \mathcal{F}\times\mathcal{E}, \quad \mathcal{Q}^{\perp_+} := \left\lbrace z \in \mathcal{F}\times\mathcal{E} \, \mid \, \forall \tilde z \in \mathcal{Q}, \langle\langle z, \tilde z \rangle\rangle_+ = 0 \right \rbrace \, ,
$$
and similarly with respect to the minus pairing as $\perp_-$, we can properly define Lagrange and Dirac structures as:
\begin{definition}
    A subspace $\mathcal{D} \subset \mathcal{F}\times\mathcal{E}$ is called a Dirac structure if:
    $$\mathcal{D} = \mathcal{D}^{\perp_+}.$$
\end{definition}
In particular a Dirac structure is isotropic $\mathcal{D} \subset \mathcal{D}^{\perp_+}$ and co-isotropic $\mathcal{D}^{\perp_+} \subset\mathcal{D}$ with respect to the plus pairing.
\begin{definition}
    A subspace $\mathcal{L} \subset \mathcal{X}\times\mathcal{E}$ is called a Lagrangian subspace if:
    $$\mathcal{L} = \mathcal{L}^{\perp_-}.$$
\end{definition}
In particular, a Lagrangian subspace is isotropic $\mathcal{L} \subset \mathcal{L}^{\perp_-}$ and co-isotropic $\mathcal{L}^{\perp_-} \subset\mathcal{L}$ with respect to the minus pairing.

Let us now illustrate the meaning of the two previous definitions.
The Dirac structure allows for describing the energy routing in a system (or equivalently entropy/free energy/free enthalpy...), and to represent a system as a network of subsystems exchanging energy through power ports. These ports are usually of three different natures (see~\cite{van2014port}): energy storage ports, input/output (control) ports and resistive ports.  
Given a set of storage ports $(f_s,e_s)$, resistive $(f_R,e_R)$ and control ports $(f_u,e_u)$ that belongs to a certain Dirac structure $\mathcal{D}$, one can deduce from the \emph{isotropy} the following \textbf{power balance}:
$$ 
\langle e_s,f_s \rangle_{\mathcal{E}_s,\mathcal{F}_s}  + \langle e_R,f_R \rangle_{\mathcal{E}_R,\mathcal{F}_R} + \langle e_u,f_u \rangle_{\mathcal{E}_u,\mathcal{F}_u} = 0 \, , 
$$
which states that given a port $(f_i,e_i)$, the power provided/received by this  port expressed as $\langle e_i,f_i  \rangle$ (\textit{e.g.} force$\times$velocity, current$\times$voltage) is preserved and can only be provided (resp. received) to (resp. by) other subsystems. 

The Lagrangian subspace defines the energy of the system, by extending the classical Hamiltonian definition~\cite{krhac2024port,van2018generalized,van2020dirac}.
Isotropy and co-isotropy with respect to the minus product enforces the Maxwell's reciprocity conditions (see Definition~\ref{def:maxwell-reciprocity-condition}) on the constitutive relations~\cite{gangi2000constitutive}. This guarantees the existence of a controlled Hamiltonian $H(z, \chi)$, with $(\chi, \delta_\chi H)$ being the energy control port (see~\ref{apx:maxwell-reciprocity} for a detailed exposition) and $z$ the latent state variable.
\begin{remark}
    Given an implicit linear relation between the state and effort variable as $ S^\top \alpha = P^\top e$ with $\alpha,e\in \mathbb{R}^n$ and $S,P \in \mathbb{R}^{n\times n}$, the Maxwell's reciprocity conditions read:
    $$ 
    S^\top P = P^\top S \, , 
    $$
    they ensure that a corresponding latent state variable $z\in \mathbb{R}^n$ such that $\alpha = Pz$, $e = Sz$ exists, along with a Hamiltonian $H(z)$ that satisfies the power balance:
    $$
    \frac{\rm d}{{\rm d}t}H(z) = \langle \partial_t \alpha, e\rangle \, .
    $$
     In the linear case, these conditions can be written as~\cite{gernandt2022equivalence}:
    $$  
    S^\top P - P^\top S = 0 \, ,
    $$
    which, given $i \in \lbrace 1,2\rbrace$, $\begin{bmatrix}
        \alpha_i \\ e_i
    \end{bmatrix} = \begin{bmatrix}
        P \\ S
    \end{bmatrix} z_i$ is equivalent to:
    $$ 
    \left\langle \left\langle \begin{bmatrix}
        \alpha_1\\ e_1
    \end{bmatrix}, \begin{bmatrix}
        \alpha_2 \\ e_2
    \end{bmatrix} \right\rangle \right\rangle_- = 0 \, .
    $$
    Note that in the nonlinear case, the Lagrange subspace becomes a Lagrangian submanifold~\cite{van2020dirac}. See~\ref{apx:maxwell-reciprocity} for a proof of the existence of a Hamiltonian given the Maxwell's reciprocity conditions.
\end{remark}
%Moreover, it allows to take into account energy control ports. As an example, consider $H(\alpha,\chi)$, $\alpha$ being the state, $\chi$ the energy control and $\varepsilon = \delta_\chi H$ the observation; one can then compute the following power balance: $$\frac{d}{dt} H = \langle \dot \alpha, \delta_z H\rangle  + \langle \dot \chi, \varepsilon \rangle.$$

In this work, the Lagrangian subspace is referred to as a Lagrange structure for its similarities with the Dirac structure (isotropy and co-isotropy with respect to the plus/minus pairing~\cite{mehrmann2023differential}). These two structures will give rise to equivalent representations as will be discussed later; note that such matter has been studied in the finite-dimensional case in~\cite{gernandt2022equivalence}.

\begin{remark}
    The Dirac and Lagrange structures can be studied in the broader notion of Kreĭn spaces~\cite{bognar2012indefinite} using the indefinite plus and minus product respectively; they would then correspond to \emph{hyper-maximal neutral} subspaces of Kreĭn spaces equipped with the plus and minus product respectively. Such an approach has been suggested in~\cite{kurula2010dirac} for Dirac structures.
\end{remark}

\subsection{Defining a port-Hamiltonian system}
A pH system is defined by a Dirac structure $\mathcal{D}$, a Lagrange structure $\mathcal{L}$ and a resistive subspace $\mathcal{R}$ such that\footnote{Note that $f_s = - \partial_t \alpha$ is due to the sign convention and allows us to write $\langle e, f\rangle = 0$.}:
$$
\begin{aligned}
    \underbrace{\begin{bmatrix}
        f_s & f_R & f_u & e_s & e_R & e_u
    \end{bmatrix}^\top \in \mathcal{D}}_{\text{Energy routing}} \, , & \qquad \underbrace{\begin{bmatrix}
        \alpha & \chi & e_s  & \varepsilon
    \end{bmatrix}^\top \in \mathcal{L}}_{\text{Energy definition}} \, , \\ \\
    \underbrace{f_s = - \partial_t \alpha}_{\text{Dynamics}} \, , & \qquad  \underbrace{(f_R,e_R) \in \mathcal{R}}_{\text{Resistive relation}} \, .
\end{aligned}
$$
$(f_s,e_s), (f_u,e_u)$ and $(f_R,e_R)$ are the state, control and resistive power ports respectively;  $(\alpha,e_s)$ and $(\chi,\varepsilon)$ are the state and control energy ports.
In particular, the resistive relation $(f_R,e_R) \in \mathcal{R}$ gives us $\langle e_R, f_R\rangle \leq 0$ (by definition of a resistive structure~\cite{van2014port}). This pH system can be represented as the following diagram:
\begin{figure}[ht]
    \centering
    \begin{tikzpicture}
        \node (X) at (0,2) {$\alpha \in \mathcal{X}$};
        \node (F) at (-2,0) {$f \in \mathcal{F}$};
        \node (E) at (2,0) {$e \in \mathcal{E}$};
        \node (D) at (0,-.5) {$\mathcal{D}, \mathcal{R}$};
        \node (L) at (1.75,1.25) {$\mathcal{L}$};
        \node (T) at (-2,1.25) {$- \frac{\partial}{\partial t}$};
        \draw (X)--(F)--(E)--(X);
    \end{tikzpicture}
    \caption{Diagram of an implicitly defined port-Hamiltonian system.}
    \label{fig:schema-port-hamiltonian}
\end{figure}
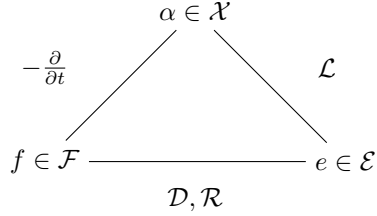

From the Lagrange structure, we get the existence of a Hamiltonian $H(z,\chi),$ $z$ being the latent state variable and $\chi$ the energy control variable. This Hamiltonian satisfies along the trajectories:
$$ 
\frac{\rm d}{{\rm d}t}H(z,\chi) = \langle e_s, \partial_t \alpha  \rangle_{\mathcal{E}_s, \mathcal{F}_s} + \langle \varepsilon, \partial_t \chi \rangle_{\mathcal{U},\mathcal{Y}} \, .
$$
Then, one can deduce from the Dirac and resistive structure the following power balance:
$$
\begin{aligned}
\frac{\rm d}{{\rm d}t} H(z, \chi ) = & \langle e_R,f_R \rangle_{\mathcal{E}_R,\mathcal{F}_R} + \langle e_u,f_u \rangle_{\mathcal{E}_u,\mathcal{F}_u} + \langle  \varepsilon,\partial_t \chi \rangle_{\mathcal{U},\mathcal{Y}} \\ \leq &  \langle e_u,f_u\rangle_{\mathcal{E}_u,\mathcal{F}_u} + \langle \varepsilon ,\partial_t  \chi\rangle_{\mathcal{U},\mathcal{Y}} \, .
\end{aligned}
$$
As a consequence, every pH system is cyclo-passive~\cite{ortega2008control} and is passive if $H\geq0$ (cyclo-passivity does not require the storage function $H$ to be positive whereas passivity does, see \textit{e.g.}~\cite{ortega2008control} for details).

\begin{remark}
    Even though pH systems are described here in implicit form (tuples of variable belonging to sets); explicit formulations of pH systems and in particular explicit representations of the Stokes-Dirac and Stokes-Lagrange structures are given in Subsection~\ref{subsec:an-explicit-representation}.
\end{remark}
\begin{remark}
    Note that  the Dirac structure and Lagrange structure are usually defined on different spaces: $(\mathcal{F}\times \mathcal{F}_{\mathcal{R}} \times \mathcal{F}_u) \times (\mathcal{E} \times \mathcal{E}_{\mathcal{R}} \times \mathcal{E}_u)$ and $(\mathcal{X} \times \mathcal{Y} \times  \mathcal{E} \times \mathcal{U})$ respectively, \textit{i.e.} the Dirac structure may contain a resistive structure $(\mathcal{F}_{\mathcal{R}},\mathcal{E}_{\mathcal{R}})$ and an input space with collocated output $(\mathcal{F}_u \times \mathcal{E}_u)$; whereas the Lagrange structure contains the state and costate $(\mathcal{X},\mathcal{E})$ along with an energy control port $(\mathcal{Y},\mathcal{U})$.
\end{remark}
\subsection{The Stokes-Dirac structure}

A Stokes-Dirac structure is a particular case of Dirac structures. More precisely, it allows us to explicitly take into account the boundary terms when differential operators define the dynamics. We will follow the definition presented in~\cite[Eq. (4.4)]{kurula2010dirac} where it is defined using operator colligation. To do so,
let us consider two Hilbert spaces  $\mathcal{F} = \mathcal{F}_s \times \mathcal{F}_u$ and $\mathcal{E} = \mathcal{E}_s \times \mathcal{E}_u$, the flow and effort spaces; $\mathcal{E}$ being the dual of $\mathcal{F}$.

Now, let $J: \mathcal{D}(J) \subset \mathcal{E}_s \rightarrow \mathcal{E}_s$ be the structure operator. Defining $\mathcal{W}_1 := \mathcal{D}(J)$, let $G: \mathcal{W}_1 \rightarrow \mathcal{E}_u$ and $K: \mathcal{W}_1 \rightarrow \mathcal{F}_u$ be the two observation and control operators; and $\mathcal{W}_0 = \ker(K) \cap \ker(G)$. Then, let us define the bond space $\mathcal{B} := \mathcal{F} \times \mathcal{E} $ and assume the following properties:
\begin{assumption}\label{eqn:skew-symmetry-assumption}(skew-symmetry, maximality and density)
    \begin{itemize}
        \item $\forall z \in \mathcal{W}_1, \quad \langle Jz, z\rangle_{\mathcal{E}_s} =  \langle G z, K z\rangle_{\mathcal{E}_u, \mathcal{F}_u} \, .$
        \item $J_0 := J_{|\mathcal{W}_0}$ is densely defined and satisfies $J_0^* = - J$.
        \item The range of $\begin{bmatrix}
            K \\ G
        \end{bmatrix}$ is dense in $\mathcal{F}_u \times \mathcal{E}_u$.
    \end{itemize}
\end{assumption}
The Stokes-Dirac structure can be defined as:
$$
    \mathcal{D} := \begin{bmatrix}
        J \\ K \\ \Id \\ G
    \end{bmatrix} \mathcal{W}_1 \subset \mathcal{B} \, .
$$
And Assumption~\ref{eqn:skew-symmetry-assumption} guarantees that it is a Dirac structure, \textit{i.e.} $\mathcal{D}^{\perp_+} = \mathcal{D}$ (see~\cite{kurula2010dirac}). 
\begin{remark}
Let us comment Assumption~\ref{eqn:skew-symmetry-assumption}; the first line is what is usually refereed to as a ``formally skew-symmetric'' operator, \textit{i.e.} skew-symmetric up to boundary terms; the second and third lines give us the co-isotropy of the Dirac structure and in particular the third lines ensures that the observation and control spaces are of the appropriate size. Assumption~\ref{eqn:skew-symmetry-assumption} will be used in~\ref{apx:operator-transposition} to prove that the wave-like system~\eqref{eqn:sys-K-matrix} is defined on a Stokes-Dirac structure.
\end{remark}
Note that this definition of a Dirac structure is explicit, in the sense that given $(f_s,f_u,e_s,e_u) \in \mathcal{D}$, we get by definition $f_s = J \, e_s$. In the next subsection, the Stokes-Lagrange subspace will allow for implicit relations, \textit{e.g.} having a (possibly unbounded) operator acting in front of $f_s$.

\subsection{The Stokes-Lagrange structure}

The goal of this section is to properly define a Stokes-Lagrange structure, and to give an explicit construction, as in~\cite{maschke2023linear}, by extending it to spatial domains with dimension greater than one. This is necessary as the following plate and electromagnetic examples require 2D and 3D domains respectively.

This will be carried out from a functional analysis point of view and not a differential geometric point of view as in~\cite{krhac2024port}, this allows us to look precisely at the effect of differential operators on the Lagrange structure, however only the linear case is explored, \textit{i.e.} systems with a quadratic Hamiltonian. 

Similarly to the Stokes-Dirac structure~\cite{brugnoli2023stokes,kurula2010dirac}, the goal here is to use differential operators in the Lagrange structure (\textit{i.e.} in the Hamiltonian) and to describe the role of the integration by parts in such a definition.

Firstly, let us consider a Hilbert space $\mathcal{Z}$ where the latent space will be defined, then let us define the control space as a Hilbert space $\mathcal{U}$ and the observation space $\mathcal{Y} := \mathcal{U}'$ as the dual of the control space. Additionally, let us define a Hilbert space $\mathcal{X}$ as the state space and $\mathcal{E} := \mathcal{X}'$, the effort space,  defined as the dual of the state space. We will now consider four operators, the first two $S$ and $P$ will allow us to define the Hamiltonian density in the spatial domain and the last two $\gamma$ and $\beta$ will allow us to take into account the integration by parts when $S$ or $P$ is a differential operator.

Let: $$P: \mathcal{D}(P) \subset \mathcal{Z} \rightarrow \mathcal{X}, \quad S: \mathcal{D}(S) \subset \mathcal{Z} \rightarrow \mathcal{E} \, ,$$ be two operators, in this work we will assume that either $P$ or $S$ is bounded on $\mathcal{Z}$ and that the other is densely-defined and closed, and define the latent space $\mathcal{Z}_1 = \mathcal{D} (P)$ or $\mathcal{Z}_1 = \mathcal{D} (S)$ accordingly.

\begin{assumption} \label{eqn:bounded-assumption} (Unbounded operator)
Either $P$ or $S$ is bounded on $\mathcal{Z}$.
\end{assumption}

\begin{remark}
    This simplification avoids the necessity of considering the intersection of operator domains $\mathcal{D}(P) \cap  \mathcal{D}(S)$ at the cost of only encompassing cases where at most one operator among $P$ and $S$ is unbounded. However, it does not imply that either $P$ or $S$ are invertible, hence constraints on the Lagrange structure are still allowed~\cite{van2018generalized}. In particular, the case where both $P$ and $S$ are bounded and $P$ is allowed to be singular is discussed in~\cite{erbay2024weierstra}.
\end{remark}
Now let us define the corresponding boundary observation and control operators:
$\gamma: \mathcal{Z}_1 \rightarrow \mathcal{Y}, \quad  \beta: \mathcal{Z}_1 \rightarrow \mathcal{U}$.
With those defined, let us denote by $\mathcal{Z}_0$ the subspace $\mathcal{Z}_0 := \ker(\gamma) \cap \ker(\beta)$ onto which controls and observations are set to zero.
Finally, let us state three assumptions on these operators that will allow us to properly define a Stokes-Lagrange structure:
\begin{assumption} \label{eqn:symmetry-assumption} (Symmetry) 
For all $z_1,z_2 \in \mathcal{Z}_1$ we have:
    $$
    \begin{aligned}
    &\langle \beta z_1, \gamma z_2\rangle_{\mathcal{U},\mathcal{Y}} + \langle S z_1, P z_2 \rangle_{\mathcal{E},\mathcal{X}} = \langle S z_2, P z_1 \rangle_{\mathcal{E},\mathcal{X}} + \langle \beta z_2, \gamma z_1\rangle_{\mathcal{U},\mathcal{Y}} \, .
    \end{aligned}        
    $$
\end{assumption}
This symmetry property is to be understood as an integration by parts \emph{on each operator}, where the operator $S^*P$ is \emph{formally} symmetric, \textit{i.e.} $S^*P = P^*S$ on $\mathcal{Z}_0$, or $S^*P$ is symmetric ``up to boundary terms''.

\begin{remark}
    Note that two boundary terms are present in Assumption~\ref{eqn:symmetry-assumption} whereas only one is required in Assumption~\ref{eqn:skew-symmetry-assumption}. This is due to the fact that two integrations by parts are required to pass from $P^*S$ to $S^*P$. For instance, with $P = \partial_{x^2}^2$, $S = \Id$, we have:
    $$
    \begin{aligned}
        \int_a^b \partial_{x^2}^2 \alpha_1 \, \alpha_2 \,  \d x & = - \int_a^b \partial_x \alpha_1 \, \partial_x \alpha_2  \,  \d x + [\partial_x \alpha_1 \, \alpha_2]_a^b \, , \\
        & = \int_a^b \alpha_1 \, \partial_{x^2}^2 \alpha_2 \,  \d x + [\partial_x \alpha_1 \, \alpha_2]_a^b -   [\alpha_1 \, \partial_x \alpha_2]_a^b \, .
    \end{aligned}
    $$
\end{remark}

\begin{assumption} \label{eqn:maximality-assumption} (Maximality) 
Let $x,e \in \mathcal{X} \times \mathcal{E}.$ If:
$$
    \forall z \in \mathcal{Z}_0, \quad \langle S z, x \rangle_{\mathcal{E},\mathcal{X}} - \langle e ,P z \rangle_{\mathcal{E},\mathcal{X}} = 0 \, ,
$$
then there exists $\tilde z \in \mathcal{Z}_1$ so that, $x = P\tilde z$, $e = S\tilde z$. Defining $S_0 := S_{|\mathcal{Z}_0}$ and $P_0 := P_{|\mathcal{Z}_0}$, we get more compactly (see~\ref{apx:stokes-lagrange} for a proof):
$$ 
\ker \begin{bmatrix}
    S_0^* & - P_0^*
\end{bmatrix} \subset \Ima \begin{bmatrix}
    P \\ S
\end{bmatrix}.
$$
\end{assumption}
\begin{definition}
    A couple $(P,S)$ defined on a Hilbert space $\mathcal{Z}_1$ is said to be \emph{maximally reciprocal}~\cite{maschke2023linear} with respect to a subspace $\mathcal{Z}_0 \subset \mathcal{Z}$ if it satisfies Assumption~\ref{eqn:maximality-assumption} for that subspace.
\end{definition}
\begin{remark}
This maximality assumption allows for the co-isotropy of the Lagrange structure to hold. It extends the second line of Assumption~\ref{eqn:skew-symmetry-assumption} to implicit relations.
\end{remark}
\begin{assumption} \label{eqn:surjectivity-assumption} (Density) The range of the operator
    $\begin{bmatrix}
        \gamma \\ \beta
    \end{bmatrix}$ is dense in $\mathcal{Y}\times\mathcal{U}$.
\end{assumption}
\begin{remark}
    The maximality assumption along with the symmetry allows us to formally state the Maxwell's reciprocity conditions. This reciprocity condition is sufficient in the finite-dimensional case  see~\cite{hairer2006geometric}; However, the density assumption is mandatory in the infinite-dimensional case where we are now considering boundary terms appearing because of integration by parts.
\end{remark}
This allows us to define properly a Stokes-Lagrange structure:
\begin{definition}\label{def:Stokes-Lagrange-candidate} Given four operators $P,\,S,\,\gamma,\,\beta$ satisfying assumptions~\ref{eqn:bounded-assumption} ,~\ref{eqn:symmetry-assumption},~\ref{eqn:maximality-assumption} and~\ref{eqn:surjectivity-assumption}, a Stokes-Lagrange structure is defined as: 
$$
    \mathcal{L} := \begin{bmatrix}
        P \\ \gamma \\ S \\ \beta
    \end{bmatrix} \mathcal{Z}_1 \, ,
$$ 
such a definition is called the \emph{image representation} of the Stokes-Lagrange structure.
\end{definition}
Let us extend the minus product on the augmented space $\mathcal{X} \times \mathcal{Y} \times \mathcal{E} \times \mathcal{U}$:
$$
    \left\langle\left\langle \begin{bmatrix}
        x_1 \\ \chi_1 \\ e_1 \\ \varepsilon_1
    \end{bmatrix}, \begin{bmatrix}
        x_2 \\ \chi_2 \\ e_2 \\ \varepsilon_2
    \end{bmatrix} \right\rangle \right\rangle_-= \langle e_2, x_1 \rangle_{\mathcal{E},\mathcal{X}} - \langle e_1, x_2 \rangle_{\mathcal{E},\mathcal{X}} + \langle \varepsilon_2, \chi_1 \rangle_{\mathcal{U},\mathcal{Y}} - \langle \varepsilon_1, \chi_2 \rangle_{\mathcal{U},\mathcal{Y}} \, .
$$
Denoting with a $\perp \! \!\!\!\perp_-$ the orthogonal of a subspace in  $\mathcal{X} \times \mathcal{Y} \times \mathcal{E} \times \mathcal{U}$  with this augmented minus product, we are now able to state the following theorem:
\begin{theorem} \label{thm:lagrange-iso-coisotropic} (Isotropy and co-isotropy of the Stokes-Lagrange structure) Given $\mathcal{L}$ as in Definition~\ref{def:Stokes-Lagrange-candidate} we have:
    $$
        \mathcal{L} = \mathcal{L}^{ \perpext},
    $$
\textit{i.e.} a Stokes-Lagrange structure is a Lagrange structure.
\end{theorem}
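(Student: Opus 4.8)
The plan is to prove the two inclusions $\mathcal{L}\subset\mathcal{L}^{\perpext}$ (isotropy) and $\mathcal{L}^{\perpext}\subset\mathcal{L}$ (co-isotropy) separately, each relying on one of the structural assumptions behind Definition~\ref{def:Stokes-Lagrange-candidate}.

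For isotropy I would take two arbitrary elements of $\mathcal{L}$, of the form $(Pz_1,\gamma z_1,Sz_1,\beta z_1)$ and $(Pz_2,\gamma z_2,Sz_2,\beta z_2)$ with $z_1,z_2\in\mathcal{Z}_1$, and insert them into the augmented minus product. The result is
$$
\langle Sz_2,Pz_1\rangle_{\mathcal{E},\mathcal{X}}-\langle Sz_1,Pz_2\rangle_{\mathcal{E},\mathcal{X}}+\langle \beta z_2,\gamma z_1\rangle_{\mathcal{U},\mathcal{Y}}-\langle\beta z_1,\gamma z_2\rangle_{\mathcal{U},\mathcal{Y}},
$$
which is precisely the rearrangement of the Symmetry Assumption~\ref{eqn:symmetry-assumption}, hence vanishes. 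This gives $\mathcal{L}\subset\mathcal{L}^{\perpext}$ directly.

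The substantive direction is co-isotropy. I would start from an element $(x,\chi,e,\varepsilon)\in\mathcal{L}^{\perpext}$, so that
$$
\langle Sz,x\rangle_{\mathcal{E},\mathcal{X}}-\langle e,Pz\rangle_{\mathcal{E},\mathcal{X}}+\langle\beta z,\chi\rangle_{\mathcal{U},\mathcal{Y}}-\langle\varepsilon,\gamma z\rangle_{\mathcal{U},\mathcal{Y}}=0
$$
for every $z\in\mathcal{Z}_1$. The first step is to restrict the test vector to $z\in\mathcal{Z}_0=\ker(\gamma)\cap\ker(\beta)$, which annihilates the two boundary pairings and leaves exactly the hypothesis of the Maximality Assumption~\ref{eqn:maximality-assumption}. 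This produces a latent element $\tilde z\in\mathcal{Z}_1$ with $x=P\tilde z$ and $e=S\tilde z$, identifying the interior components of the candidate.

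It remains to recover the boundary components, and this is where I expect the main difficulty. I would substitute $x=P\tilde z$, $e=S\tilde z$ back into the orthogonality identity for a general $z\in\mathcal{Z}_1$, then use the Symmetry Assumption~\ref{eqn:symmetry-assumption} applied to the pair $(z,\tilde z)$ to trade the interior difference $\langle Sz,P\tilde z\rangle-\langle S\tilde z,Pz\rangle$ against the boundary difference $\langle\beta\tilde z,\gamma z\rangle-\langle\beta z,\gamma\tilde z\rangle$. After collecting terms the identity reduces to
$$
\langle\beta\tilde z-\varepsilon,\gamma z\rangle_{\mathcal{U},\mathcal{Y}}+\langle\beta z,\chi-\gamma\tilde z\rangle_{\mathcal{U},\mathcal{Y}}=0\qquad\forall z\in\mathcal{Z}_1.
$$
The delicate point is that the pair $(\gamma z,\beta z)$ ranges only over the image of $\begin{bmatrix}\gamma\\\beta\end{bmatrix}$; invoking the Density Assumption~\ref{eqn:surjectivity-assumption}, this image is dense in $\mathcal{Y}\times\mathcal{U}$, so by continuity of the duality pairings the linear functional above vanishes on all of $\mathcal{Y}\times\mathcal{U}$. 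Testing separately against the two factors and using non-degeneracy of the pairing then forces $\varepsilon=\beta\tilde z$ and $\chi=\gamma\tilde z$. Hence $(x,\chi,e,\varepsilon)=(P\tilde z,\gamma\tilde z,S\tilde z,\beta\tilde z)\in\mathcal{L}$, which yields $\mathcal{L}^{\perpext}\subset\mathcal{L}$ and, together with isotropy, the claimed equality. The careful bookkeeping of the boundary pairings, together with the passage from the dense range to all of $\mathcal{Y}\times\mathcal{U}$, is the step most likely to require attention.
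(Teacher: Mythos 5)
Your proof is correct and follows essentially the same route as the paper's: symmetry for the isotropy inclusion, then restriction to $\mathcal{Z}_0$ to invoke maximality, back-substitution combined with symmetry to isolate the boundary terms, and the density of the range of $\begin{bmatrix}\gamma \\ \beta\end{bmatrix}$ to force $\chi = \gamma\tilde z$ and $\varepsilon = \beta\tilde z$. The reduced boundary identity you derive matches the paper's exactly (up to a sign rearrangement), so no gaps remain.
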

\begin{proof}
    See~\ref{apx:stokes-lagrange-proof}.
\end{proof}
\begin{remark}
    This construction is very similar to the (Stokes-)Dirac structure case presented earlier, where $S = L$, $\gamma = K$, $\beta = G$, and $P = \Id$ and the plus product is used instead of the minus product. As mentioned in~\cite{kurula2010dirac}, these subspaces can be understood in the broader notion of Kreĭn spaces~\cite{bognar2012indefinite}.
\end{remark}
Definition~\ref{def:Stokes-Lagrange-candidate} and Theorem~\ref{thm:lagrange-iso-coisotropic} give us a framework onto which we shall build many examples later. Let us come back to the wave equation example to see how a Hamiltonian can be obtained from the Stokes-Lagrange structure.
\begin{example}(Wave equation - continued)
    Defining the operator $P = I_2$ and $S = \textup{Diag}\begin{bmatrix}
        -\partial_x E \partial_x & \frac{1}{\rho}
    \end{bmatrix}$, one gets:
    $$
        \begin{cases}
            P \partial_t \begin{bmatrix}
                w \\ p
            \end{bmatrix} = \begin{bmatrix}
                0 & 1 \\ -1 & 0
            \end{bmatrix} e \, , \\
            e = S x \, .
        \end{cases}
    $$
In particular, since $P = \Id_2$, the latent state $z$ is equal to the state $x$. One can define a Hamiltonian as the total mechanical energy and obtain: 
$$
\begin{aligned}
    H(w,p) &:= \frac{1}{2}  \int_a^b \left ( \frac{1}{\rho}p^2 + (E \partial_x w) (\partial_x w) \right ) \, \d x \, , \\
        &= \frac{1}{2} \int_a^b  \left ( \frac{1}{\rho} p^2 - \partial_x (E \partial_x w) w \right )  \, \d x + \frac{1}{2}[w E \partial_x w]_a^b \, , \\%& \text{(Integration by parts)}\\
        & = \frac{1}{2} \langle P \alpha, S\alpha\rangle_{L^2([a,b])} + \frac{1}{2} \langle \beta(\alpha), \gamma(\alpha)\rangle_{\mathbb{R}^2} \, , 
\end{aligned}
$$
where $\alpha := \begin{bmatrix} w \\ p \end{bmatrix}$ and the boundary operators are defined by $\gamma (\alpha) := \begin{bmatrix}
    w(a) \\ w(b)
\end{bmatrix}$ (the Dirichlet trace) and $\beta(\alpha) := \begin{bmatrix}
    - E \partial_x w(a) \\
    E \partial_x w(b)
\end{bmatrix}$ (the Neumann trace).
\end{example}
Following the previous example, let us now define the Hamiltonian of a general Stokes-Lagrange structure as:
\begin{equation}\label{eqn:Hamiltonian-Lagrange-Definition}
    \forall \, z \in \mathcal{Z}_1, \quad H(z) := \frac{1}{2} \langle S z, P z \rangle_{\mathcal{E},\mathcal{X}} + \frac{1}{2} \langle \beta(z), \gamma(z) \rangle_{\mathcal{U},\mathcal{Y}} \, .
\end{equation}
\begin{remark}
    Note that this Hamiltonian is explicit with respect to the latent state variable $z$ but not with respect to the state variable $x$, it is the case only if $P = \Id$, \textit{i.e.} $x = Pz = z$, as in the latter example.
\end{remark}
\subsection{An explicit representation} \label{subsec:an-explicit-representation}
Now, considering a port-Hamiltonian system implicitly defined with a Stokes-Dirac structure $\mathcal{D}$ and a Stokes-Lagrange structure $ \mathcal{L}$ as:
$$
\begin{bmatrix}
    f & f_u & f_R & e & e_u & e_R
\end{bmatrix} \in \mathcal{D} \, , \quad \begin{bmatrix}
    x & \chi & e & \varepsilon
\end{bmatrix} \in \mathcal{L} \, , \quad f = - \partial_t x \, ,
$$
with $(f,e)$ the state flow and effort power port, $(f_R,e_R)$ the Resistive power port ($\langle f_R, e_R\rangle \leq 0$), $(f_u,e_u)$ the control power port and $(\chi,\varepsilon)$ the control \emph{energy} port. Denoting $z$ the latent space variable, $S$ and $P$ the Lagrange structure operators, $J$ the formally skew-symmetric Dirac structure operator and $R = R^* \geq 0$ the resistive structure operator, we get the following explicit dynamics:
$$
\left \lbrace
    \begin{aligned}
    \begin{bmatrix}
     P \partial_t z \\
     - f_R 
    \end{bmatrix} &= J \begin{bmatrix}
        e \\ e_R
    \end{bmatrix},\\
    e &= S z \, , \\
    f_R &= - R \, e_R \, .%, \quad R = R^\top \geq 0.
    \end{aligned} \right.
$$
Along the trajectories, this system satisfies the following power balance:
$$
\begin{aligned}
    \frac{\rm d}{{\rm d}t} H(z) & = \langle S z, P \partial_t z \rangle_{\mathcal{E}_s,\mathcal{F}_s}  + \frac{1}{2}(\langle \beta(z), \gamma (\partial_t  z)\rangle_{\mathcal{U},\mathcal{Y}} - \langle \beta(\partial_t z), \gamma (z) \rangle_{\mathcal{U},\mathcal{Y}}) \\ & \quad + \frac{1}{2}(\langle \beta(z), \gamma(\partial_t  z)\rangle_{\mathcal{U},\mathcal{Y}}  + \langle \beta (\partial_t z), \gamma(z) \rangle_{\mathcal{U},\mathcal{Y}}) \, , \\
    & = \langle e, \partial_t x \rangle_{\mathcal{E}_s,\mathcal{F}_s} + \langle \varepsilon, \partial_t \chi \rangle_{\mathcal{U},\mathcal{Y}} \, , \\
    & = \langle e_u, f_u \rangle_{\mathcal{E}_u,\mathcal{F}_u}  + \underbrace{\langle e_R, f_R \rangle_{\mathcal{E}_R,\mathcal{F}_R}}_{\leq0} +  \langle \varepsilon, \partial_t \chi \rangle_{\mathcal{U},\mathcal{Y}} \, , \\
    & \leq \langle e_u, f_u \rangle_{\mathcal{E}_u,\mathcal{F}_u}  +\langle \varepsilon, \partial_t \chi \rangle_{\mathcal{U},\mathcal{Y}} \, .
\end{aligned}
$$
Note that an additional port appears in the power balance, namely the \emph{energy port} $(\chi, \varepsilon)$. Moreover, the definition of a Hamiltonian is not unique~\cite{maschke2023linear}. Indeed, $H$ depends on $z$ and $\chi$ and one can perform a \emph{Legendre transform} with respect to $\chi$ and get the following:
$$
    \forall \, z \in \mathcal{Z}_1, \quad \tilde H(z) = \frac{1}{2} \langle S z, P z \rangle_{\mathcal{E},\mathcal{X}} - \frac{1}{2} \langle \beta z, \gamma z \rangle_{\mathcal{U},\mathcal{Y}} \, ,
$$
which yields:
$$
    \frac{\rm d}{{\rm d}t} \tilde H(z) = \langle e_u, f_u \rangle_{\mathcal{E}_u,\mathcal{F}_u} + \langle e_R, f_R \rangle_{\mathcal{E}_R,\mathcal{F}_R} - \langle  \partial_t \varepsilon,\chi \rangle_{\mathcal{U},\mathcal{Y}} \, .
$$
From a control perspective, choosing $\tilde H$ or $H$ gives us access to different input and output variables (the derivative of the input or the integral of the output) which will be discussed in detail in Section~\ref{sec:control}.

\begin{remark}
    Note that this representation of a Dirac and Lagrange structures is not the only possibility; in particular one could choose to put a partition of the effort variable on the left side of the first line and the corresponding flows on the right side yielding \textit{e.g.} $\begin{bmatrix}
        f_1 & e_2
    \end{bmatrix}^\top = J \begin{bmatrix}
        e_1 & f_2
    \end{bmatrix}^\top$, such representation has been studied in \cite{altmann2024novel}. In particular, since $J = -J^\top$, these systems are also defined on a Dirac structure with an explicit Hamiltonian; hence they are still pH systems.
\end{remark}

\section{Examples} \label{sec:examples}
Let us now apply the previous method to a set of examples, namely two plate models, the Maxwell's equation and the Dzektser seepage model. In particular, we will focus on the simplification from the Reissner-Mindlin thick plate model to the Kirchhoff-Love thin plate model~\cite{destuynder2013mathematical}  and from the classical Maxwell's equation to its low-frequency approximation. This will show us the different boundary ports and power balance obtained depending on the choice of representation, as well as how the constrained systems are written in either Stokes-Dirac or Stokes-Lagrange representations.

\subsection{Plate models}

Let us define the two following operators:
$$
    \begin{aligned}
        \forall \, \bm v \in H^1(\Omega, \mathbb R ^2),\qquad \Grad(\bm v) &:= \frac{1}{2}(\nabla \bm v + \nabla \bm v ^\top) \, , \\
        \forall \, \tensor T \in H^1(\Omega, \mathbb R ^{2\times 2}),\qquad \Diver(\tensor T)_{i} &:= \sum_{j=1}^n  \frac{\partial T_{i,j}}{ \partial {x_i}} \, ,
    \end{aligned}
$$
the first being the (tensor-valued) symmetric gradient  and the second the (vector-valued) tensor divergence.
Then, the Reissner-Mindlin model for a thick plate reads~\cite{mindlin1951influence}:
\begin{equation}\label{eqn:RM}
    \left \lbrace \begin{aligned}
        & \rho h \, \frac{\partial^2 w}{\partial t^2} = \diver( k G h \, (\grad(w) - \bm{\phi})) \, , \\
        &\frac{\rho h^3}{12} \, \frac{\partial^2\bm{\phi}}{\partial t^2} = \Diver( \mathbb{D} \, \Grad(\bm{\phi})) + k G h \, (\grad(w) - \bm{\phi}) \, ,
    \end{aligned} \right.
\end{equation}
with $w$ the vertical displacement, $\bm{\phi} = (\phi_1,\phi_2)$ the deflection of the vertical cross-section along the $x$ and $y$ axis respectively, $\rho$ the surface density of the plate, $h$ the thickness of  the plate, $G$ the shear modulus and $k$ the correction factor~\cite{brugnoli2019portI}.

\begin{remark}
    Such class of mechanical systems has been thoroughly studied in~\cite{ponce2024systematic}, where a method is proposed to derive --given a set of kinematic assumptions-- from the general linear elasticity problem of an $N$-dimensional system, a reduced pH model.
\end{remark}

This model allows us to describe a 2D plate under shear and bending constraints. Moreover it can be simplified to the Kirchhoff-Love plate model when neglecting the angular momentum $\partial_t^2 \bm{\phi} = 0$ and the shear deformation $ \grad(w) = \bm{\phi}$. Let us now write this system using firstly a Stokes-Dirac structure, then a Stokes-Lagrange structure.

Such a comparison regarding the position of the operators (in the Hamiltonian or in the structure matrix) has been explored in~\cite{SchoberlSiukaECC2013} using jet bundles and field theories, here we will study it in the Stokes-Lagrange formalism and additionally show that the obtained energy boundary ports can be put in the Stokes-Lagrange structure and that we can reduce the Reissner-Minldin plate model to the Kirchhoff-Love model through an added constraint.

\subsubsection{Stokes-Dirac formulation} %% STOKES DIRAC ?
\paragraph{Reissner-Mindlin} Following~\cite{brugnoli2019portI,brugnoli2019portII}, let us firstly define the Hamiltonian and choose the energy variables:
$$
\begin{aligned}
    H_{RM} := \frac{1}{2}\int_\Omega & \Big( \rho h (\partial_t w)^2 + \frac{\rho h^3}{12}|\partial_t \bm{\phi}|^2 + kGh \, | \grad(w) - \bm{\phi}|^2  \\
    & + \Grad(\bm{\phi}) : \mathbb{D}\,  \Grad(\bm{\phi}) \, \Big) \, \d x \, .
\end{aligned}
$$
Let us now select the 4 following quantities as energy variables: 
$$ \alpha_{RM}^{SD} = \begin{bmatrix}
    p_w \\ \tensor{\varepsilon}_{\bm{\phi}}\\ \bm{p}_{\bm{\phi}}\\ \bm{\varepsilon_{w\phi}}
\end{bmatrix}:= \begin{bmatrix}
     \rho h \partial_t w \\ \Grad(\bm{\phi}) \\ \frac{\rho h^3}{12}\partial_t \bm{\phi} \\ \grad(w) - \bm{\phi}
\end{bmatrix},$$
namely the linear momentum, the curvature tensor, the angular momentum and shear deformation, respectively.
Then, the Hamiltonian can be written as:
$$
\begin{aligned}
    H_{RM}^{SD} := \frac{1}{2}\int_\Omega & \frac{1}{\rho h}p_w^2 + \frac{12}{\rho h^3}|\bm{p_\phi}|^2 + \tensor{\varepsilon}_{\bm{\phi}}: \mathbb{D} \, \tensor{\varepsilon}_{\bm{\phi}} + kGh \, |\bm{\varepsilon_{w,\phi}}|^2 \, \d x \, .
\end{aligned}
$$
The dynamics~\eqref{eqn:RM} reads then:
\begin{equation} \label{eqn:reissner-mindlin-stokes-dirac}
    \begin{bmatrix}
        \partial_t p_w \\
        \partial_t \tensor{\varepsilon}_{\bm{\phi}} \\
        \partial_t \bm{p}_{\bm{\phi}} \\ 
        \partial_t \bm{\varepsilon_{w\phi}}
    \end{bmatrix} = \begin{bmatrix}
        0 & 0 & 0 & \diver \\
         0 & 0 & \Grad  & 0\\
         0 & \Diver & 0 & I_2 \\
         \grad & 0 & -I_2 & 0
    \end{bmatrix} \begin{bmatrix}
         v \\ \tensor{\sigma}_\phi \\ \bm{\omega} \\ \bm{N}
    \end{bmatrix},
\end{equation}
together with the constitutive relations:
$$
    \begin{bmatrix}
        v \\ \tensor{\sigma}_\phi \\ \bm{\omega} \\ \bm{N}
    \end{bmatrix} = \begin{bmatrix}
        \frac{1}{\rho h } & 0 & 0 & 0 \\
        0 & \mathbb{D} & 0 & 0 \\
        0 & 0 & \frac{12}{\rho h^3} & 0\\
        0  & 0 & 0 & kGh
    \end{bmatrix} \begin{bmatrix}
         p_w \\
         \tensor{\varepsilon}_{\bm{\phi}} \\
         \bm{p}_{\bm{\phi}} \\ 
        \bm{\varepsilon_{w\phi}}
    \end{bmatrix},
$$
where $v$ is the velocity, $\tensor{\sigma}_\phi$ is the momenta tensor, $\bm{p}_{\bm{\phi}}$ is the angular momentum and $N$ is the shear stress. Finally, the power balance reads:
\begin{theorem}(Reissner-Mindlin power balance - Stokes-Dirac case) \label{eqn:power-balance-reissner-mindlin-dirac}
    $$
        \frac{\rm d}{{\rm d}t}H_{RM}^{SD} = \int_{\partial \Omega}  v \bm{N} \cdot \bm{n} + \bm{\omega}^\top \tensor{\sigma}_\phi \bm{n} \, \d s \, .
    $$
\end{theorem}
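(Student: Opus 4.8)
The plan is to differentiate the Hamiltonian $H_{RM}^{SD}$ along a trajectory, substitute the closure (structure) equations~\eqref{eqn:reissner-mindlin-stokes-dirac}, and then apply the relevant Stokes' identities to collect the boundary terms. Since $H_{RM}^{SD}$ is quadratic in the energy variables and the co-energy variables $(v, \tensor{\sigma}_\phi, \bm{\omega}, \bm{N})$ are precisely its variational derivatives, the chain rule gives
$$\frac{\rm d}{{\rm d}t} H_{RM}^{SD} = \int_\Omega v\, \partial_t p_w + \tensor{\sigma}_\phi : \partial_t \tensor{\varepsilon}_{\bm{\phi}} + \bm{\omega} \cdot \partial_t \bm{p}_{\bm{\phi}} + \bm{N}\cdot \partial_t \bm{\varepsilon_{w\phi}} \, \d x.$$
First I would insert the four dynamical equations read off from the structure matrix, namely $\partial_t p_w = \diver \bm{N}$, $\partial_t \tensor{\varepsilon}_{\bm{\phi}} = \Grad \bm{\omega}$, $\partial_t \bm{p}_{\bm{\phi}} = \Diver \tensor{\sigma}_\phi + \bm{N}$ and $\partial_t \bm{\varepsilon_{w\phi}} = \grad v - \bm{\omega}$.

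After this substitution, the decisive algebraic step is that the interconnection terms coming from the off-diagonal $\pm I_2$ blocks cancel: the contribution $\bm{\omega}\cdot\bm{N}$ arising from $\partial_t \bm{p}_{\bm{\phi}}$ exactly annihilates the contribution $-\bm{N}\cdot\bm{\omega}$ arising from $\partial_t \bm{\varepsilon_{w\phi}}$. This reflects the skew-symmetry of the structure operator and is the mechanism by which no power is stored internally. What remains is
$$\frac{\rm d}{{\rm d}t} H_{RM}^{SD} = \int_\Omega \left( v\,\diver \bm{N} + \bm{N}\cdot\grad v \right) + \left( \tensor{\sigma}_\phi : \Grad \bm{\omega} + \bm{\omega}\cdot \Diver \tensor{\sigma}_\phi \right) \d x,$$
which groups into two dual pairs, each of the form ``operator plus its formal adjoint''.

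Next I would apply the two Stokes' identities (established in Appendix~\ref{apx:stokes-identities}) to each pair. For the scalar/vector pair, the product rule $\diver(v\bm{N}) = v\,\diver\bm{N} + \bm{N}\cdot\grad v$ together with the divergence theorem yields $\int_{\partial\Omega} v\,\bm{N}\cdot\bm{n}\,\d s$. For the tensor/vector pair, I would first use that $\tensor{\sigma}_\phi$ is symmetric --- it equals $\mathbb{D}\,\tensor{\varepsilon}_{\bm{\phi}}$ with $\tensor{\varepsilon}_{\bm{\phi}} = \Grad(\bm{\phi})$ symmetric --- so that $\tensor{\sigma}_\phi : \Grad\bm{\omega} = \tensor{\sigma}_\phi : \nabla\bm{\omega}$; integration by parts for the tensor divergence then produces the boundary term $\int_{\partial\Omega} \bm{\omega}^\top \tensor{\sigma}_\phi\,\bm{n}\,\d s$. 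Summing the two contributions gives exactly the claimed expression.

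The main obstacle is the rigorous justification of the integration by parts for the symmetric-gradient / tensor-divergence pair, where one must exploit the symmetry of $\tensor{\sigma}_\phi$ to replace $\Grad$ by the full gradient and to identify the correct boundary pairing; this is precisely the content of the Stokes' identity deferred to Appendix~\ref{apx:stokes-identities}. The remaining steps are routine, provided the co-energy variables are regular enough for the boundary traces to be well defined, which I would assume throughout (classical solutions, or the appropriate Sobolev regularity guaranteeing well-defined normal traces on $\partial\Omega$).
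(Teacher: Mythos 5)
Your proof is correct, and it is worth noting that the paper itself does not spell this computation out: the stated proof is simply a citation to \cite[eq.~(42)]{brugnoli2019portI}. Your argument is precisely the standard one, and it mirrors step for step the proof the paper \emph{does} give for the analogous Kirchhoff--Love power balance in the Stokes--Dirac case: differentiate the quadratic Hamiltonian so that the co-energy variables appear paired with the time derivatives of the energy variables, substitute the four structure equations, observe that the $\pm I_2$ coupling contributions $\bm{\omega}\cdot\bm{N}$ and $-\bm{N}\cdot\bm{\omega}$ cancel by skew-symmetry, and convert the two remaining ``operator plus formal adjoint'' pairs into boundary terms. Your handling of the tensor pair is the right one and matches the identity proved in the paper's appendix on Stokes' identities: the symmetry of $\tensor{\sigma}_\phi = \mathbb{D}\,\Grad(\bm{\phi})$ (which holds since $\Grad(\bm{\phi})$ is symmetric and $\mathbb{D}$ preserves symmetry) is exactly what lets one replace $\Grad\bm{\omega}$ by the full gradient $\nabla\bm{\omega}$ in the contraction and obtain the boundary pairing $\int_{\partial\Omega}\bm{\omega}^\top\tensor{\sigma}_\phi\,\bm{n}\,\d s$; note that the appendix states this identity on a 3D domain, so strictly speaking you should invoke its obvious 2D analogue, and that the symmetry hypothesis there is (up to an evident typo in the paper) on the tensor argument, as you use it. Your regularity caveat at the end is appropriate and is implicitly assumed by the paper as well.
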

\begin{proof}
    The proof is to be found in~\cite[eq. (42)]{brugnoli2019portI}.
\end{proof}

\paragraph{From Reissner-Mindlin to Kirchhoff-Love model}
The Kirchhoff-Love model is obtained by neglecting the angular momentum and assuming pure bending of the plate, \textit{i.e.} $\bm{p_\phi} = 0$ and $\bm{\varepsilon_{w,\phi}} = \grad(w) - \bm{\phi} = 0$, this leads to the following constrained Differential Algebraic Equation (DAE) formulation of the Kirchhoff-Love plate model (see \textit{e.g.}~\cite{Zwart_2024}):
\begin{equation} \label{eqn:kirchhoff-love-stokes-dirac}
    \begin{bmatrix}
        \partial_t p_w \\
        \partial_t \tensor{\varepsilon}_{\bm{\phi}} \\
        \bm{0} \\ \bm{0}
    \end{bmatrix} = \begin{bmatrix}
        0 & 0 & 0 & \diver \\
         0 & 0 & \Grad  & 0\\
         0 & \Diver & 0 & I_2 \\
         \grad & 0 & -I_2 & 0
    \end{bmatrix} \begin{bmatrix} v \\ \tensor{\sigma}_\phi \\ \bm{\omega} \\ \bm{N}
    \end{bmatrix},
\end{equation}
where $\bm{N}$ and $\bm{\omega}$ now play the role of Lagrange multipliers. Moreover, the Hamiltonian is now reduced to:
$$
    H^{SD}_{KL} = \frac{1}{2}\int_\Omega  \frac{1}{\rho h}p_w^2 + \tensor{\varepsilon}_{\bm{\phi}}: \mathbb{D} \, \tensor{\varepsilon}_{\bm{\phi}} \, \d x \, .
$$

\begin{theorem}(Kirchhoff-Love power balance - Stokes-Dirac case)
The power balance of the Kirchhoff-Love plate model written in Stokes-Dirac formulation reads:
    $$
        \frac{\rm d}{{\rm d}t}H_{KL}^{SD} = \int_{\partial \Omega}   v \Diver(\tensor{\sigma}_\phi) \cdot \bm{n}  + \grad(v)^\top \tensor{\sigma}_\phi \bm{n} \, \d s \, .
    $$
\end{theorem}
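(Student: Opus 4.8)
The plan is to differentiate the reduced Hamiltonian along trajectories and to push everything to the boundary by combining the two integration-by-parts (Stokes) identities of Appendix~\ref{apx:stokes-identities} with the algebraic constraints carried by the Lagrange multipliers $\bm{N}$ and $\bm{\omega}$. First I would use the constitutive relations $v = \frac{1}{\rho h}p_w$ and $\tensor{\sigma}_\phi = \mathbb{D}\,\tensor{\varepsilon}_{\bm{\phi}}$ to write
$$\frac{\rm d}{{\rm d}t}H_{KL}^{SD} = \int_\Omega v\,\partial_t p_w + \tensor{\sigma}_\phi : \partial_t \tensor{\varepsilon}_{\bm{\phi}}\, \d x \, ,$$
and then substitute the two \emph{dynamical} rows of~\eqref{eqn:kirchhoff-love-stokes-dirac}, namely $\partial_t p_w = \diver(\bm{N})$ and $\partial_t\tensor{\varepsilon}_{\bm{\phi}} = \Grad(\bm{\omega})$, to reach $\int_\Omega v\,\diver(\bm{N}) + \tensor{\sigma}_\phi : \Grad(\bm{\omega})\,\d x$.

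Next I would integrate each term by parts. The scalar Stokes identity gives $\int_\Omega v\,\diver(\bm{N})\,\d x = -\int_\Omega \grad(v)\cdot\bm{N}\,\d x + \int_{\partial\Omega} v\,\bm{N}\cdot\bm{n}\,\d s$, while the tensor identity --- valid because $\tensor{\sigma}_\phi$ is symmetric, being $\mathbb{D}$ applied to the symmetric tensor $\tensor{\varepsilon}_{\bm{\phi}} = \Grad(\bm{\phi})$, so that $\tensor{\sigma}_\phi : \Grad(\bm{\omega}) = \tensor{\sigma}_\phi : \nabla\bm{\omega}$ --- gives $\int_\Omega \tensor{\sigma}_\phi : \Grad(\bm{\omega})\,\d x = -\int_\Omega \Diver(\tensor{\sigma}_\phi)\cdot\bm{\omega}\,\d x + \int_{\partial\Omega}\bm{\omega}^\top\tensor{\sigma}_\phi\bm{n}\,\d s$. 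At this stage the balance is a volume part $-\int_\Omega \grad(v)\cdot\bm{N} + \Diver(\tensor{\sigma}_\phi)\cdot\bm{\omega}\,\d x$ plus the boundary part $\int_{\partial\Omega} v\,\bm{N}\cdot\bm{n} + \bm{\omega}^\top\tensor{\sigma}_\phi\bm{n}\,\d s$, which is precisely the Reissner-Mindlin balance of Theorem~\ref{eqn:power-balance-reissner-mindlin-dirac}; in fact the cleanest route is to start directly from that theorem, whose boundary term is unchanged in form.

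The decisive step is to invoke the two \emph{algebraic} rows of~\eqref{eqn:kirchhoff-love-stokes-dirac}, which force the multipliers to satisfy $\bm{\omega} = \grad(v)$ and $\bm{N} = -\Diver(\tensor{\sigma}_\phi)$. Substituting these into the volume part yields $-\grad(v)\cdot(-\Diver(\tensor{\sigma}_\phi)) - \Diver(\tensor{\sigma}_\phi)\cdot\grad(v) = 0$ pointwise, so the bulk contribution cancels identically. Injecting the same relations into the boundary part rewrites $v\,\bm{N}\cdot\bm{n}$ in terms of $v\,\Diver(\tensor{\sigma}_\phi)\cdot\bm{n}$ (with the sign fixed by the constraint $\bm{N} = -\Diver(\tensor{\sigma}_\phi)$) and $\bm{\omega}^\top\tensor{\sigma}_\phi\bm{n}$ into $\grad(v)^\top\tensor{\sigma}_\phi\bm{n}$, producing the announced expression. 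This cancellation is the structural reason why the Kirchhoff-Love energy flows only through the reduced ports involving $\Diver(\tensor{\sigma}_\phi)$ and $\grad(v)$.

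I expect the main obstacle to be the careful bookkeeping of the tensor integration by parts and its boundary pairing: one must justify replacing $\Grad$ by the full gradient $\nabla$ under the double contraction (using symmetry of $\tensor{\sigma}_\phi$), then apply the divergence theorem componentwise to the vector field $\tensor{\sigma}_\phi\bm{\omega}$ to recover $\int_{\partial\Omega}(\tensor{\sigma}_\phi\bm{n})\cdot\bm{\omega}\,\d s = \int_{\partial\Omega}\bm{\omega}^\top\tensor{\sigma}_\phi\bm{n}\,\d s$ with the correct index contraction. Everything else --- the vanishing of the volume integrals --- follows automatically once the constraint relations are inserted.
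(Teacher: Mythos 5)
Your proposal is correct and follows essentially the same route as the paper's proof: differentiate $H_{KL}^{SD}$, substitute the two dynamical rows of~\eqref{eqn:kirchhoff-love-stokes-dirac}, integrate by parts with the scalar and tensor Stokes identities of~\ref{apx:stokes-identities}, then invoke the algebraic rows $\bm{\omega} = \grad(v)$ and $\bm{N} = -\Diver(\tensor{\sigma}_\phi)$ to cancel the volume terms and rewrite the boundary ones. One remark: carried through consistently, your (correct) constraint $\bm{N} = -\Diver(\tensor{\sigma}_\phi)$ turns the boundary term $v\,\bm{N}\cdot\bm{n}$ into $-v\,\Diver(\tensor{\sigma}_\phi)\cdot\bm{n}$, so the first term of the announced formula should carry a minus sign; the paper's own final step drops this sign in exactly the same way, so your careful bookkeeping has in fact exposed a sign typo in the stated theorem rather than introduced an error of your own.
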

\begin{proof}
    Let us compute the time derivative of the Hamiltonian $H^{SD}_{KL}$:
    $$
        \begin{aligned}
            \frac{\rm d}{{\rm d}t} H^{SD}_{KL} & = \int_\Omega v \, \partial_t p_w + \tensor{\sigma}_{\bm{\phi}} : \partial_t \tensor{\varepsilon}_{\bm{\phi}} \, \d x \, , \\
            & = \int_\Omega v \, \diver(\bm{N}) + \tensor{\sigma}_{\bm{\phi}} : \Grad(\bm{w}) \, \d x \, ,\\
            & = \int_\Omega - \grad(v) \cdot \bm{N} - \Diver(\tensor{\sigma}_{\bm{\phi}}) \cdot \bm{w} \, \d x 
             + \int_{\partial \Omega} v \bm{N} \cdot \bm{n} + \bm{w}^\top \tensor{\sigma}_{\bm{\phi}} \bm{n} \, \d s \, , \\
            & = \int_\Omega - \bm{w} \cdot \bm{N} + \bm{N} \cdot \bm{w} \, \d x 
             + \int_{\partial \Omega} v \Diver(\tensor{\sigma}_{\bm{\phi}}) \cdot \bm{n} + \grad(v)^\top \tensor{\sigma}_{\bm{\phi}} \bm{n} \, \d s \, , \\
            & = \int_{\partial \Omega} v \Diver(\tensor{\sigma}_{\bm{\phi}}) \cdot \bm{n} + \grad(v)^\top \tensor{\sigma}_{\bm{\phi}} \bm{n} \, \d s \, ,
        \end{aligned}
    $$
    which ends the proof.
\end{proof}
\begin{remark}
Note that this model uses first order differential operators only, contrarily to the classical representation of the Kirchhoff-Love model, which requires second order differential operators;
the following unconstrained system is obtained when solving the constraints analytically:
$$
    \begin{bmatrix}
        \partial_t p_w \\
        \partial_t \tensor{\varepsilon}_{\bm{\phi}}
    \end{bmatrix} = \begin{bmatrix}
        0 & - \diver(\Diver(\cdot)) \\
       \Grad(\grad(\cdot)) &0
    \end{bmatrix} \begin{bmatrix}
        v \\
        \tensor{\sigma_\phi}
    \end{bmatrix},
$$
together with the constitutive relations $v = \frac{1}{\rho h} p_w$, $\tensor{\sigma}_{\bm{\phi}} = \mathbb{D} \, \tensor{\varepsilon}_{\bm{\phi}}$; such a formulation has been studied in~\cite{brugnoli2019portII}.
\end{remark}

\subsubsection{Stokes-Lagrange formulation}%% STOKES LAGRANGE ?
\paragraph{Reissner-Mindlin}
Let us now choose a different set of energy variables, namely:
$$
\alpha^{SL}_{RM} := \begin{bmatrix}
    w & p_w & \bm{\phi}^\top & \bm{p_\phi}^\top
\end{bmatrix}^\top,
$$
then the Hamiltonian becomes: 
$$
    \begin{aligned}
    H_{RM}^{SL} := \frac{1}{2}\int_\Omega & \Big( \frac{1}{\rho h} p_w^2 + \frac{12}{\rho h^3}|\bm{p_\phi}|^2 + kGh \, | \grad(w) - \bm{\phi}|^2  \\
    & +  \Grad(\bm{\phi}) : \mathbb{D}\,  \Grad(\bm{\phi}) \, \Big) \, \d x \, .
    \end{aligned}
$$
In order to define the system properly, let us compute the variational derivatives with respect to each variable.
\begin{lemma}\label{lem:var-der-RM-SL}
    The variational derivatives of $H_{RM}^{SL}$ are given as:
    $$
        \left\{ \begin{aligned}
        \delta_w H_{RM}^{SL} &= - \diver( kGh\, (\grad(w) - \bm{\phi})) \, , \\
        \delta_{p_w} H_{RM}^{SL} &= \frac{1}{\rho h} p_w \, ,\\
        \delta_{\bm{\phi}} H_{RM}^{SL} &= kGh \, (\grad(w) - \bm{\phi})) - \Diver(\mathbb{D} \, \Grad(\bm{\phi})) \, , \\
        \delta_{\bm{p_\phi}} H_{RM}^{SL} &= \frac{12}{\rho h^3} \bm{p}_{\bm{\phi}} \, .
        \end{aligned} \right.
    $$
\end{lemma}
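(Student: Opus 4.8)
The plan is to compute each of the four variational derivatives directly from the definition: for a state component $u \in \{w,\, p_w,\, \bm{\phi},\, \bm{p_\phi}\}$, the derivative $\delta_u H_{RM}^{SL}$ is the $L^2$-representative of the Gâteaux differential
$$
\frac{\rm d}{{\rm d}\epsilon}\Big|_{\epsilon=0} H_{RM}^{SL}(\ldots,\, u + \epsilon\,\delta u,\, \ldots) = \int_\Omega \delta_u H_{RM}^{SL} \cdot \delta u \, \d x \, ,
$$
taken over admissible variations $\delta u$ whose boundary traces vanish (variations in $\mathcal{Z}_0$), so that any surface contribution produced by an integration by parts is discarded. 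Since $H_{RM}^{SL}$ is a sum of four independent quadratic contributions, I would treat each variable separately and split the work into the purely algebraic cases and the differential ones.

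The momenta $p_w$ and $\bm{p_\phi}$ enter $H_{RM}^{SL}$ only through the pointwise quadratic terms $\frac{1}{2\rho h} p_w^2$ and $\frac{6}{\rho h^3}|\bm{p_\phi}|^2$; differentiating these requires no integration by parts and yields immediately $\delta_{p_w} H_{RM}^{SL} = \frac{1}{\rho h}\, p_w$ and $\delta_{\bm{p_\phi}} H_{RM}^{SL} = \frac{12}{\rho h^3}\,\bm{p_\phi}$. The displacement $w$ appears only inside the shear term $\frac{1}{2}\int_\Omega kGh\,|\grad(w) - \bm{\phi}|^2\,\d x$; its Gâteaux differential produces the integrand $kGh\,(\grad(w)-\bm{\phi})\cdot\grad(\delta w)$, and a single integration by parts (the divergence theorem) transfers the gradient onto the coefficient, giving $-\diver(kGh\,(\grad(w)-\bm{\phi}))$ plus a boundary term $\int_{\partial\Omega} kGh\,(\grad(w)-\bm{\phi})\cdot\bm{n}\, \delta w \, \d s$ that vanishes for admissible $\delta w$.

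The deflection $\bm{\phi}$ is the only component entering through a differential operator in the genuinely tensorial term. It contributes both algebraically through the shear energy (producing the term proportional to $kGh\,(\grad(w)-\bm{\phi})$) and through the bending energy $\frac{1}{2}\int_\Omega \Grad(\bm{\phi}):\mathbb{D}\,\Grad(\bm{\phi})\,\d x$. For the latter I would use the symmetry of the elasticity tensor $\mathbb{D}$ (so that $A:\mathbb{D}B = B:\mathbb{D}A$) to write the differential as $\int_\Omega \mathbb{D}\,\Grad(\bm{\phi}):\Grad(\delta\bm{\phi})\,\d x$, then invoke the Green identity for the symmetric-gradient/tensor-divergence pair, namely $\int_\Omega \tensor{T} : \Grad(\bm{v})\,\d x = -\int_\Omega \Diver(\tensor{T})\cdot\bm{v}\,\d x + \int_{\partial\Omega}(\tensor{T}\,\bm{n})\cdot\bm{v}\,\d s$ for a symmetric tensor field $\tensor{T}$, with $\tensor{T} = \mathbb{D}\,\Grad(\bm{\phi})$. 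Discarding the boundary term yields the bending contribution $-\Diver(\mathbb{D}\,\Grad(\bm{\phi}))$, which together with the shear contribution gives the stated $\delta_{\bm{\phi}}H_{RM}^{SL}$.

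I expect the only delicate step to be this tensorial integration by parts: one must correctly pair the symmetric gradient $\Grad$ with its formal adjoint $-\Diver$, and verify that the surface integral $\int_{\partial\Omega}(\mathbb{D}\,\Grad(\bm{\phi})\,\bm{n})\cdot\delta\bm{\phi}\,\d s$ indeed vanishes on the admissible variation class. This is precisely the Stokes' identity underlying the Stokes-Lagrange structure, and the boundary operators $\gamma,\beta$ are designed to absorb exactly these traces; a careful statement of the identity (as collected in~\ref{apx:stokes-identities}) closes the argument.
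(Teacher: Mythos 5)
Your method is exactly the paper's: term-by-term G\^ateaux differentials, with the two momenta handled purely algebraically, a single integration by parts for the shear term in $w$, and the $\Grad$/$\Diver$ Green identity of~\ref{apx:stokes-identities} for the bending term, all surface contributions being discarded on variations with vanishing boundary traces. For $\delta_w H_{RM}^{SL}$, $\delta_{p_w} H_{RM}^{SL}$ and $\delta_{\bm{p_\phi}} H_{RM}^{SL}$ your argument is complete and coincides with the paper's computation (the paper also uses $H^2_0$ test functions, and, like you, the symmetry of $\mathbb{D}$ to collect the bending cross terms).

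However, the one place where you gloss is precisely where a sign must be checked, and it does not come out as you claim. Perturbing $\bm{\phi} \mapsto \bm{\phi} + \epsilon\,\bm{F}$ in the shear energy $\frac{1}{2}\int_\Omega kGh\,|\grad(w)-\bm{\phi}|^2\,\d x$ produces the first-order term
$$
    -\,\epsilon \int_\Omega kGh\,\bigl(\grad(w)-\bm{\phi}\bigr)\cdot \bm{F} \, \d x \, ,
$$
because $\bm{\phi}$ enters with a minus sign inside the square; hence the shear contribution to $\delta_{\bm{\phi}} H_{RM}^{SL}$ is $kGh\,(\bm{\phi}-\grad(w))$, \emph{opposite} to the sign in the statement. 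Your phrase ``producing the term proportional to $kGh\,(\grad(w)-\bm{\phi})$'' hides exactly this factor $-1$, so the conclusion that this ``gives the stated $\delta_{\bm{\phi}}H_{RM}^{SL}$'' would fail under an honest expansion. A consistency check confirms the minus sign: the Stokes-Lagrange dynamics~\eqref{eqn:reissner-mindlin-stokes-lagrange} gives $\partial_t \bm{p_\phi} = -\delta_{\bm{\phi}} H_{RM}^{SL}$, and only $\delta_{\bm{\phi}} H_{RM}^{SL} = -kGh\,(\grad(w)-\bm{\phi}) - \Diver(\mathbb{D}\,\Grad(\bm{\phi}))$ reproduces the second line of the Reissner-Mindlin system~\eqref{eqn:RM}. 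For what it is worth, the paper's own appendix computation of $\Delta^h_{\bm{\phi}}$ drops the same minus sign on the shear cross term, so the slip is present in the source as well; but a blind proof should have surfaced it rather than asserting agreement with the printed formula.
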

\begin{proof}
    The proof is given in~\ref{apx-subsec:reissner-mindlin-varder-lagrange}.
\end{proof}
Now, we can write the dynamics as:
\begin{equation} \label{eqn:reissner-mindlin-stokes-lagrange}
    \begin{bmatrix}
        \partial_t w \\
        \partial_t p_w \\
        \partial_t \bm{\phi} \\
        \partial_t \bm{p_\phi}
    \end{bmatrix} = \begin{bmatrix}
        0 & 1 & 0 & 0\\
        -1 & 0 & 0 & 0\\
        0 & 0 & 0 & 1\\
        0 & 0 & -1& 0
    \end{bmatrix} \begin{bmatrix}
        \delta_w H_{RM}^{SL} \\
        \delta_{p_w} H_{RM}^{SL} \\
        \delta_{\bm{\phi}} H_{RM}^{SL} \\
        \delta_{\bm{p_\phi}} H_{RM}^{SL}
    \end{bmatrix},
\end{equation}
and with the Lagrange structure matrix:
$$
    S_{RM}^{SL} := \begin{bmatrix}
        - \diver ( kGh \, \grad( \cdot ) ) & 0 & - \diver ( kGh \, \cdot)  &  0\\
        0 & \frac{1}{\rho h} & 0 & 0\\
        kGh \, \grad(\cdot) & 0 &  -\Diver( \mathbb{D} \, \Grad(\cdot)) + kGh  & 0\\
        0 & 0 & 0 & \frac{12}{\rho h^3}
    \end{bmatrix},
$$
the constitutive relations of Lemma~\ref{lem:var-der-RM-SL} read:
$$
e_{RM}^{SL} = S_{RM}^{SL} \, \alpha^{SL}_{RM} \, ,
$$ 
where $e_{RM}^{SL} := \begin{bmatrix}
        \delta_w H_{RM}^{SL} &
        \delta_{p_w} H_{RM}^{SL} &
        \delta_{\bm{\phi}} H_{RM}^{SL}\,^\top &
        \delta_{\bm{p_\phi}} H_{RM}^{SL}\,^\top
    \end{bmatrix}^\top$. 
This system has the following power balance:
\begin{theorem}(Reissner-Mindlin power balance - Stokes-Lagrange case)
The power balance of the Reissner-Mindlin plate model written in Stokes-Lagrange formulation reads:
    $$
        \frac{\rm d}{{\rm d}t} H^{SL}_{RM} = \int_{\partial \Omega} \underbrace{(\partial_t w)}_{=: \partial_t \chi^\partial_{RM,1}} \,  \underbrace{kGh \,(\grad(w) - \phi)\cdot \bm{n}}_{\varepsilon^\partial_{RM,1}} + \underbrace{\partial_t \bm{\phi}^\top}_{=: \partial_t \chi^\partial_{RM,2 }} \, \underbrace{\mathbb{D} \, \Grad(\bm{\phi}) \bm{n}}_{\varepsilon^\partial_{RM,2}} \, \d x \, .
    $$
\end{theorem}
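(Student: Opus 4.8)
The plan is to compute $\frac{\rm d}{{\rm d}t} H^{SL}_{RM}$ directly along trajectories of~\eqref{eqn:reissner-mindlin-stokes-lagrange} and to isolate the boundary contributions generated by the differential operators that are now hidden inside the Hamiltonian. First I would differentiate $H^{SL}_{RM}$ term by term. The two algebraic kinetic terms $\frac{1}{\rho h}p_w^2$ and $\frac{12}{\rho h^3}|\bm{p_\phi}|^2$ differentiate without any spatial integration by parts, producing $\int_\Omega \frac{1}{\rho h}p_w\,\partial_t p_w + \frac{12}{\rho h^3}\bm{p_\phi}\cdot\partial_t\bm{p_\phi}\,\d x$. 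The two remaining terms $kGh\,|\grad(w)-\bm{\phi}|^2$ and $\Grad(\bm{\phi}):\mathbb{D}\,\Grad(\bm{\phi})$ carry spatial derivatives of the state, so their time derivatives involve $\grad(\partial_t w)$ and $\Grad(\partial_t\bm{\phi})$; these are precisely the places where the boundary ports will arise.

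Next I would integrate by parts on these two terms using the Stokes' identities of~\ref{apx:stokes-identities} to transfer $\grad$ and $\Grad$ off the time derivatives. For the shear term this gives $-\diver(kGh(\grad(w)-\bm{\phi}))\,\partial_t w$ in the bulk together with the boundary flux $kGh(\grad(w)-\bm{\phi})\cdot\bm{n}\,\partial_t w$, and a cross contribution $-kGh(\grad(w)-\bm{\phi})\cdot\partial_t\bm{\phi}$. For the bending term, the symmetric-gradient/tensor-divergence Green formula produces $-\Diver(\mathbb{D}\,\Grad(\bm{\phi}))\cdot\partial_t\bm{\phi}$ in the bulk and the boundary flux $(\partial_t\bm{\phi})^\top\mathbb{D}\,\Grad(\bm{\phi})\,\bm{n}$. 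After this step the whole bulk integrand is exactly $\delta_w H_{RM}^{SL}\,\partial_t w + \delta_{p_w} H_{RM}^{SL}\,\partial_t p_w + \delta_{\bm{\phi}} H_{RM}^{SL}\cdot\partial_t\bm{\phi} + \delta_{\bm{p_\phi}} H_{RM}^{SL}\cdot\partial_t\bm{p_\phi}$, i.e. $e_{RM}^{SL}\cdot\partial_t\alpha_{RM}^{SL}$, with the variational derivatives furnished by Lemma~\ref{lem:var-der-RM-SL}, while the two surviving surface integrals are already the two claimed boundary fluxes.

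I would then substitute the dynamics~\eqref{eqn:reissner-mindlin-stokes-lagrange}, namely $\partial_t\alpha_{RM}^{SL} = J^{SL} e_{RM}^{SL}$ with the block skew-symmetric $J^{SL}$. Since $J^{SL}=-\,(J^{SL})^\top$, the bulk integrand collapses pointwise to $e_{RM}^{SL}\cdot J^{SL} e_{RM}^{SL}=0$, so the entire volume contribution vanishes and only the boundary fluxes remain. This yields exactly the stated identity, with the identifications $\partial_t\chi^\partial_{RM,1}=\partial_t w$, $\varepsilon^\partial_{RM,1}=kGh(\grad(w)-\bm{\phi})\cdot\bm{n}$, $\partial_t\chi^\partial_{RM,2}=\partial_t\bm{\phi}$ and $\varepsilon^\partial_{RM,2}=\mathbb{D}\,\Grad(\bm{\phi})\,\bm{n}$. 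Structurally, this is the general Stokes-Lagrange power balance of Subsection~\ref{subsec:an-explicit-representation} specialized to $P=\Id$ and to the present skew-symmetric $J$ with no resistive or control power port, so that $\frac{\rm d}{{\rm d}t}H=\langle\varepsilon,\partial_t\chi\rangle_{\mathcal{U},\mathcal{Y}}$ reduces to the energy boundary port alone.

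The main obstacle I anticipate is the integration by parts for the bending term: one must apply the correct Green identity for the pair $(\Grad,\Diver)$ -- the symmetric gradient and the tensor divergence -- rather than a naive componentwise one, using the symmetry of $\mathbb{D}$ and of $\Grad(\bm{\phi})$ so that the trace organizes itself as $(\partial_t\bm{\phi})^\top\mathbb{D}\,\Grad(\bm{\phi})\,\bm{n}$ and the cross term from the shear part cancels correctly against the velocity substitution. This is exactly the content deferred to~\ref{apx:stokes-identities}, and keeping the traces and normal components consistent there is the only delicate point; the remaining skew-symmetry cancellation is automatic.
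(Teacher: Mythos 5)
Your proposal is correct and follows exactly the route the paper intends: the paper states this theorem without a written proof, but the computation you describe (time-differentiate $H^{SL}_{RM}$, transfer $\grad$ and $\Grad$ off the time derivatives via the Stokes' identities of~\ref{apx:stokes-identities}, recognize the bulk integrand as $e_{RM}^{SL}\cdot\partial_t\alpha_{RM}^{SL}$ using Lemma~\ref{lem:var-der-RM-SL}, and kill it by the skew-symmetry of $J^{SL}$) is precisely the argument the paper spells out for the neighbouring Kirchhoff-Love Stokes-Lagrange theorem. Your identification of the surviving surface fluxes, including the symmetric-gradient/tensor-divergence Green formula for the bending term and the absence of any boundary contribution from the $\bm{\phi}$-variation of the shear term, matches the stated boundary ports.
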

\begin{remark}
    Similarly to Remark~\ref{rmk:power-balance-wave-equation}, this power balance has the same physical interpretation  as in the Stokes-Dirac case (Theorem~\ref{eqn:power-balance-reissner-mindlin-dirac}). In particular one can identify the energy ports $\chi^\partial_{RM}, \varepsilon^\partial_{RM}$ with the power ports of the Stokes-Dirac case as: $\partial_t \chi^\partial_{RM,1} = v|_{\partial \Omega}$, $\partial_t \chi^\partial_{RM,2} = \bm{w}|_{\partial \Omega} $, $\varepsilon^\partial_{RM,1} = \bm{N}|_{\partial \Omega} \cdot \bm{n}$, and $\varepsilon^\partial_{RM,2} = \tensor{\sigma}_{\bm{\phi}} |_{\partial \Omega} \, \bm{n}$.
\end{remark}

\paragraph{From Reissner-Mindlin to Kirchhoff-Love model}
As before, let us apply the constraints $p_\phi = 0$ and $\grad\, w = \phi$, leading to:
\begin{equation}\label{eqn:kirchhoff-love-stokes-lagrange}
\begin{bmatrix}
    \Id & 0 & 0 & 0\\
    0 & \Id & 0 & 0\\
    \grad & 0 & 0 & 0\\
    0 & 0 & 0 & \bm{0}
\end{bmatrix}
    \begin{bmatrix}
        \partial_t w \\
        \partial_t p_w \\
        \bm{0} \\
        \bm{0}
    \end{bmatrix} = \begin{bmatrix}
        0 & 1 & 0 & 0\\
        -1 & 0 & 0 & 0\\
        0 & 0 & 0 & 1\\
        0 & 0 & -1& 0
    \end{bmatrix} \begin{bmatrix}
        \delta_w H_{KL}^{SL} \\
        \delta_{p_w} H_{KL}^{SL} \\
        \bm{N}_{\bm{\phi}}\\
        \bm{w}
    \end{bmatrix},
\end{equation}
where $\bm{N}_{\bm{\phi}}$ and  $\bm{w}$ play the role of Lagrange multipliers of the constraints. This allows us to define the constrained Hamiltonian as:
$$
     H_{RM}^{SL} := \frac{1}{2}\int_\Omega \frac{1}{\rho h} p_w^2  +  \Grad(\grad(w)) : \mathbb{D}\,  \Grad(\grad(w)) \, \d x \, ,
$$
and deduce the reduced power balance:
\begin{theorem}(Kirchhoff-Love power balance - Stokes-Lagrange case)
    $$
        \frac{\rm d}{{\rm d}t} H^{SL}_{RM} = \int_{\partial \Omega} \frac{\partial}{\partial t}\left[ \grad(w) \right]^\top \, \mathbb{D} \, \Grad(\grad(w)) \bm{n} \, \d s \, .
    $$
\end{theorem}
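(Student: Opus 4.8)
The plan is to mirror the Stokes-Dirac proof given just above, moving every operator that there sat in the structure matrix into the Hamiltonian and then integrating by parts twice. First I would differentiate the functional $H_{KL}^{SL}$ along a trajectory. Differentiating under the integral sign and using the major symmetry of the bending tensor $\mathbb{D}$ (so that $\Grad(\grad(\partial_t w)):\mathbb{D}\,\Grad(\grad(w)) = \tensor{\sigma}_\phi : \Grad(\grad(\partial_t w))$ with $\tensor{\sigma}_\phi := \mathbb{D}\,\Grad(\grad(w))$), I obtain
$$\frac{\rm d}{{\rm d}t}H_{KL}^{SL} = \int_\Omega \frac{1}{\rho h}\,p_w\,\partial_t p_w + \tensor{\sigma}_\phi : \Grad(\grad(\partial_t w)) \, \d x \, .$$
Next I would insert the dynamics read off from the constrained system~\eqref{eqn:kirchhoff-love-stokes-lagrange}: the first line gives $\partial_t w = \delta_{p_w}H_{KL}^{SL} = \frac{1}{\rho h}p_w =: v$, while the reduced momentum equation $\partial_t p_w = -\diver(\Diver(\tensor{\sigma}_\phi))$ is obtained either directly as $-\delta_w H_{KL}^{SL}$ (a double integration by parts on the bending term) or by eliminating the Lagrange multipliers $\bm{N}_\phi$ and $\bm{w}$ from the remaining lines (equivalently $\bm{N}_\phi = -\Diver(\tensor{\sigma}_\phi)$), recovering the unconstrained form recalled in the remark following~\eqref{eqn:kirchhoff-love-stokes-dirac}.

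The core of the argument is then a single application of the fourth-order Stokes identity associated with the pair $(\Grad\grad,\ \diver\Diver)$, that is, two successive integrations by parts (the tensor-divergence Green formula followed by the scalar one), which I would take from Appendix~\ref{apx:stokes-identities}:
$$\int_\Omega \tensor{\sigma}_\phi : \Grad(\grad(v)) \, \d x = \int_\Omega v\,\diver(\Diver(\tensor{\sigma}_\phi)) \, \d x + \int_{\partial\Omega} \grad(v)^\top \tensor{\sigma}_\phi\, \bm{n} \, \d s - \int_{\partial\Omega} v\,\Diver(\tensor{\sigma}_\phi)\cdot\bm{n} \, \d s \, .$$
Substituting the momentum equation into the first term of $\frac{\rm d}{{\rm d}t}H_{KL}^{SL}$ and this identity into the second, the two volume integrals are opposite and cancel, leaving only boundary contributions. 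The bending boundary term is precisely $\int_{\partial\Omega}\grad(v)^\top\tensor{\sigma}_\phi\,\bm{n}\,\d s = \int_{\partial\Omega}\partial_t[\grad(w)]^\top\,\mathbb{D}\,\Grad(\grad(w))\,\bm{n}\,\d s$, which is the claimed energy boundary port.

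The main obstacle is the careful bookkeeping of the boundary terms produced by the two integrations by parts, and in particular accounting for the second surviving contribution $-\int_{\partial\Omega}v\,\Diver(\tensor{\sigma}_\phi)\cdot\bm{n}\,\d s = \int_{\partial\Omega} v\,\bm{N}_\phi\cdot\bm{n}\,\d s$, the shear-force port inherited from the multiplier $\bm{N}_\phi$. To recover the stated single-term balance I would argue that this term is inactive under the boundary conditions retained for the plate (the vertical velocity $v=\partial_t w$ being prescribed on $\partial\Omega$), so that only the rotational port $(\grad(w),\ \mathbb{D}\,\Grad(\grad(w))\bm{n})$ contributes; equivalently, it is the term carried by the Lagrange structure's boundary operators $\gamma,\beta$ and absorbed into the energy port of the reduced model. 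I would finally check that the multiplier elimination of Step~2 is legitimate, i.e. that the constraints $\partial_t^2\bm{\phi}=0$ and $\grad(w)=\bm{\phi}$ are compatible with the regularity on $w$ needed for the fourth-order Stokes identity to apply.
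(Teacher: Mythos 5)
Your route is the same as the paper's proof (differentiate $H^{SL}_{KL}$ along trajectories, insert the dynamics $\partial_t w = \frac{1}{\rho h}p_w$ and $\partial_t p_w = -\diver(\Diver(\tensor{\sigma}_{\bm{\phi}}))$ after multiplier elimination, integrate by parts twice using the Green formulas of the appendix, and cancel the volume terms), and up to the last step your bookkeeping is correct --- in fact more careful than the paper's: the double integration by parts genuinely produces the second boundary contribution $-\int_{\partial\Omega}(\partial_t w)\,\Diver(\tensor{\sigma}_{\bm{\phi}})\cdot\bm{n}\,\d s$, which the paper's displayed proof passes over in silence (its second line jumps from the volume integral to the moment term alone). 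That this shear term does not vanish in general is confirmed by the Stokes-Dirac balance proved just before, which retains both $v\,\Diver(\tensor{\sigma}_{\bm{\phi}})\cdot\bm{n}$ and $\grad(v)^\top\tensor{\sigma}_{\bm{\phi}}\bm{n}$, and by the fact that under the constraint $\grad(w)=\bm{\phi}$ the Reissner-Mindlin shear stress $kGh\,(\grad(w)-\bm{\phi})$ converges to the reaction multiplier $\bm{N}_{\bm{\phi}}=-\Diver(\tensor{\sigma}_{\bm{\phi}})$, not to zero; since $H^{SD}_{KL}$ and $H^{SL}_{KL}$ coincide along the constrained dynamics, the two power balances must agree term by term.

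The genuine gap is your final step. You dispose of the shear port by invoking boundary conditions (``the vertical velocity $v=\partial_t w$ being prescribed on $\partial\Omega$'') that appear nowhere in the theorem, in the constrained system~\eqref{eqn:kirchhoff-love-stokes-lagrange}, or in the surrounding section --- no boundary conditions have been fixed at this stage, precisely because the purpose of these power-balance theorems is to exhibit the \emph{open} boundary ports. The alternative phrasing, that the term is ``carried by the Lagrange structure's boundary operators $\gamma,\beta$ and absorbed into the energy port,'' is not an argument: a contribution carried by the Stokes-Lagrange boundary operators shows up in the balance as an additional pairing $\langle \varepsilon^\partial, \partial_t \chi^\partial\rangle$, i.e. as an extra term on the right-hand side, not as zero. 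What your computation actually establishes is
$$
\frac{\rm d}{{\rm d}t} H^{SL}_{KL} \;=\; \int_{\partial\Omega} \frac{\partial}{\partial t}\left[\grad(w)\right]^\top \mathbb{D}\,\Grad(\grad(w))\,\bm{n} \;-\; (\partial_t w)\,\Diver\bigl(\mathbb{D}\,\Grad(\grad(w))\bigr)\cdot\bm{n}\,\d s \, ,
$$
so either the second term must be kept in the statement (restoring consistency with the Stokes-Dirac case through the identifications $\partial_t\chi^\partial_1 = v|_{\partial\Omega}$, $\varepsilon^\partial_1 = \bm{N}_{\bm{\phi}}\cdot\bm{n}$), or the hypothesis $\partial_t w|_{\partial\Omega}=0$ must be stated explicitly. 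As a proof of the literal single-term statement your argument therefore does not close; what you have actually uncovered is an omission in the paper's own proof, whose final display moreover carries a typo ($\Grad(\grad(\partial_t w))$ in the boundary integrand where $\Grad(\grad(w))\,\bm{n}$ is meant).
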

\begin{proof}
    Let us compute the time derivative of the Hamiltonian:
    $$
        \begin{aligned}
            \frac{\rm d}{{\rm d}t} H^{SL}_{KL} & = \int_\Omega \frac{1}{\rho h} p_w \, \partial_t p_w + \Grad(\grad(\frac{\partial}{\partial_t} w)) : \mathbb{D}\,  \Grad(\grad(w)) \, \d x \, , \\
            & = \int_\Omega \delta_{p_w} H^{SL}_{KL} \cdot \delta_w H^{SL}_{KL} - \delta_{w}H_{KL}^{SL} \cdot \delta_{p_w} H^{SL}_{KL} \, \d x \\
            & \qquad +   \int_{\partial \Omega} \frac{\partial}{\partial t}(\grad(w)) ^\top \mathbb{D}\,  \Grad(\grad(\partial_t w))\, \d s \, , \\
            &= \int_{\partial \Omega} \frac{\partial}{\partial t} (\grad(w)) ^\top \mathbb{D}\,  \Grad(\grad(\partial_t w))\, \d s \, .
        \end{aligned}
    $$
\end{proof}
Note that this is an example of \emph{constrained} Stokes-Lagrange structure.
\begin{remark}
    In this particular case, the two bottom lines of~\eqref{eqn:kirchhoff-love-stokes-lagrange} can be removed directly and yield a reduced system with the two state variables $w$ and $p_w$. Here they are kept as they will be useful when studying the relationships between the Stokes-Dirac and Stokes-Lagrange representations in Subsection~\ref{subsec:passing-representation-plate}.
\end{remark}

\subsubsection{Passing from one representation to the other} \label{subsec:passing-representation-plate}
As in Subsection~\ref{subsec:operator-transposition}, let us define the operator $G$ that will allow us to transform one representation into the other. Such an operator is given as:
$$
    G := \begin{bmatrix}
        0 & \Id & 0 & 0\\
        0 & 0 & \Grad & 0\\
        0 & 0 & 0 & \Id \\
        \grad & 0 & - \Id & 0  
    \end{bmatrix}, \qquad G^\dag = \begin{bmatrix}
        0 & 0 & 0 & -\diver \\
        \Id & 0 & 0 & 0\\
        0 & - \Diver & 0 & -\Id \\
        0 & 0 &  \Id & 0  
    \end{bmatrix}.
$$
Applying Theorem~\ref{thm:dirac-lagrange-equiv} shows the following:
\begin{theorem} The following diagram commutes:
    \begin{figure}[ht]
	\centering
	
	\begin{tikzpicture}
		\draw node at (-4,0) {\eqref{eqn:reissner-mindlin-stokes-dirac}};
		\draw node at (4,0) {\eqref{eqn:reissner-mindlin-stokes-lagrange}};
		\draw node at (-4,-2) {\eqref{eqn:kirchhoff-love-stokes-dirac}};
		\draw node at (4,-2) {\eqref{eqn:kirchhoff-love-stokes-lagrange}};

		\draw node at (-5.5,-1) {$\begin{bmatrix}
		    \bm{p_\phi} = \bm{0} \\ \bm{\varepsilon_{w,\phi}}=\bm{0} 
		\end{bmatrix}$};
		\draw node at (5.5,-1) {$\begin{bmatrix}
		    \bm{p_\phi} = \bm{0} \\ \grad \, w = \bm{\phi}
		\end{bmatrix}$} ;

		\draw node at (0,.5) {$G, G^\dag$};
		\draw node at (0,-2.5) {$G,G^\dag$};

		\draw[<->,>=latex] (-2.5,0) --(2.5,0);
		\draw[<->,>=latex] (-2.5,-2) --(2.5,-2);
		
		\draw[->,>=latex] (-4,-.5) --(-4,-1.5);
		\draw[->,>=latex] (4,-.5) --(4,-1.5);
	\end{tikzpicture}
 \caption{Commutative diagram showing the relations between the representations of the Reissner-Mindlin and Kirchhoff-Love plate models .}
 \label{fig:Plate-com-diag}
\end{figure}
\end{theorem}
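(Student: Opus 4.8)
The plan is to read the square of Figure~\ref{fig:Plate-com-diag} as two horizontal equivalences supplied by Theorem~\ref{thm:dirac-lagrange-equiv} and two vertical maps that impose the Kirchhoff--Love constraints, and then to check that the transposition operator $G$ intertwines those constraints. Concretely, I would show that the top edge and the bottom edge are each an instance of the triple of identities~\eqref{eqn:operator-transposition}, and that the constraint $\bm{p_\phi}=\bm{0},\,\bm{\varepsilon_{w\phi}}=\bm{0}$ used on the Stokes--Dirac side is carried by $G$ onto the constraint $\bm{p_\phi}=\bm{0},\,\grad\,w=\bm{\phi}$ used on the Stokes--Lagrange side. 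Commutativity then follows because both the horizontal and the vertical transitions are governed by one and the same $G$.

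First I would verify the top (Reissner--Mindlin) edge. The state identity $\alpha^{SD}_{RM}=G\,\alpha^{SL}_{RM}$ is immediate: applying $G$ to $(w,p_w,\bm{\phi},\bm{p_\phi})^\top$ returns $(p_w,\Grad(\bm{\phi}),\bm{p_\phi},\grad(w)-\bm{\phi})^\top$, which is exactly the Stokes--Dirac energy variable. The structural identity $G\,J^{SL}\,G^\dag=J^{SD}$ is a $4\times4$ block computation: multiplying out, the only nontrivial entries are produced by the formal adjoint pairs $(\grad,-\diver)$ and $(\Grad,-\Diver)$, and one recovers exactly the operator matrix of~\eqref{eqn:reissner-mindlin-stokes-dirac}. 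For the effort identity $G^\dag e^{SD}_{RM}=e^{SL}_{RM}$ I would insert the Stokes--Dirac co-states $(v,\tensor{\sigma}_\phi,\bm{\omega},\bm{N})$ into $G^\dag$ and compare with the variational derivatives of Lemma~\ref{lem:var-der-RM-SL}, using $\bm{N}=kGh(\grad(w)-\bm{\phi})$ and $\tensor{\sigma}_\phi=\mathbb{D}\,\Grad(\bm{\phi})$. Here one must track the signs of the shear-coupling term and the boundary contributions carefully; moreover, since $\ker(G)$ consists of the constant (rigid-body) displacements, the horizontal equivalence is to be understood up to this kernel, exactly as in the remark following Example~\ref{example:wave-operator-transposition}.

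Next I would handle the vertical maps. The key observation is that, under $\alpha^{SD}=G\,\alpha^{SL}$, the third Stokes--Dirac component obeys $\bm{p}_{\bm{\phi}}=\bm{p_\phi}$ and the fourth obeys $\bm{\varepsilon_{w\phi}}=\grad(w)-\bm{\phi}$. Hence the Stokes--Dirac constraints $\bm{p_\phi}=\bm{0}$ and $\bm{\varepsilon_{w\phi}}=\bm{0}$ are transported verbatim onto the Stokes--Lagrange constraints $\bm{p_\phi}=\bm{0}$ and $\grad\,w=\bm{\phi}$. Restricting the identities~\eqref{eqn:operator-transposition} to these constraint sets, and applying Theorem~\ref{thm:dirac-lagrange-equiv} once more to the resulting DAE pair~\eqref{eqn:kirchhoff-love-stokes-dirac}--\eqref{eqn:kirchhoff-love-stokes-lagrange}, yields the bottom edge with the same $G$; this shows that descending then crossing equals crossing then descending, i.e. the diagram commutes.

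The main obstacle I anticipate is the differential-algebraic nature of the bottom row. Because $G$ and $G^\dag$ contain $\grad,\Grad,\diver,\Diver$, I must verify that they remain well defined after restriction to the constraint manifold and that the image representation of the Stokes--Lagrange structure (Definition~\ref{def:Stokes-Lagrange-candidate}) is preserved, i.e. that imposing $\grad\,w=\bm{\phi}$ together with $\bm{p_\phi}=\bm{0}$ is compatible with the boundary operators $\gamma,\beta$ so that the symmetry and maximality assumptions survive. In particular I would check that the boundary terms produced when transposing $G$ through the constraints cancel against the Lagrange multipliers $\bm{N}_{\bm{\phi}}$ and $\bm{w}$ of~\eqref{eqn:kirchhoff-love-stokes-lagrange}; controlling these boundary contributions, rather than the interior algebra, is the delicate part of the argument.
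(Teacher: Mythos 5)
Your proposal is correct and takes essentially the same route as the paper, which proves this theorem simply by invoking Theorem~\ref{thm:dirac-lagrange-equiv} with the stated $G$, $G^\dag$: your three-identity check of the top edge, the transport of the constraints $\bm{p_\phi}=\bm{0}$, $\bm{\varepsilon_{w\phi}}=\bm{0} \leftrightarrow \grad w = \bm{\phi}$ through $G$, and the restricted application to the constrained pair \eqref{eqn:kirchhoff-love-stokes-dirac}--\eqref{eqn:kirchhoff-love-stokes-lagrange} is exactly the intended argument, worked out in more detail than the paper records. Your instinct to track the shear-coupling sign is well placed — the third component of $G^\dag e^{SD}$ is $-\Diver(\tensor{\sigma}_\phi)-\bm{N}$, consistent with the dynamics \eqref{eqn:RM} but differing by the sign of the $kGh(\grad(w)-\bm{\phi})$ term from Lemma~\ref{lem:var-der-RM-SL} as printed — and note that the multiplier matching on the bottom row is the purely algebraic identification $\bm{N}_{\bm{\phi}}=-\Diver(\tensor{\sigma}_\phi)-\bm{N}$, $\bm{w}=\bm{\omega}$, rather than a boundary-term cancellation.
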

\subsection{Maxwell's equations}
Let us now write the Maxwell's equations~\cite{kovetz2000electromagnetic} in Stokes-Dirac and Stokes-Lagrange representations. In this particular example, the former is the classical representation of the Maxwell's equation, whereas the latter coincides with the  \emph{vector potential formulation}.
\subsubsection{Stokes-Dirac formulation}
Let us firstly write the classical port-Hamiltonian representation of the Maxwell's equations~\cite{van2002hamiltonian}:
\begin{equation}  \label{eqn:maxwell-dirac}
	\begin{bmatrix}
		\partial_t \bm{D} \\
		\partial_t \bm{B}
	\end{bmatrix} = \begin{bmatrix}
		0 & \curl \\ - \curl & 0
	\end{bmatrix}
	\begin{bmatrix}
		\bm{E} \\ \bm{H}
	\end{bmatrix} - \begin{bmatrix}
		\bm{J} \\ 0
	\end{bmatrix},
\end{equation}
with $\bm{D}$ the electric field, $\bm{E}$ the electric flux, $\bm{B}$ the magnetic field, $\bm{H}$ the magnetic flux and $\bm{J}$ the current density. The Hamiltonian reads:
$$
    H_{EM} = \frac{1}{2}\int_\Omega \frac{1}{\varepsilon_0} |\bm{D}|^2 + \frac{1}{\mu_0} |\bm{B}|^2 \, \d x \, ,
$$
with $\mu_0>0$ the magnetic permeability, $\varepsilon_0>0$ and electric permittivity, along with the constitutive relations:
$$
\left \lbrace
	\begin{aligned}
		\bm{E} = \delta_{\bm{D}} H_{EM} = \frac{1}{\varepsilon_0} \bm{D} \, , \qquad \diver(\bm{D}) = \rho \, , \\
		\bm{H} = \delta_{\bm{B}} H_{EM} = \frac{1}{\mu_0} \bm{B} \, , \qquad         \diver(\bm{B}) = 0 \, .	     
	\end{aligned} \right.
$$
In the following we will assume that the charge density $\rho$ is zero.
\begin{remark}
    Note that the gauge conditions $ \diver(\bm{D}) = 0 = \diver(\bm{B}) $ lie in  the Stokes-Lagrange structure of the system, however for the sake of simplicity, they will be dropped in the following as they do not play a role in the dynamics of the system. Indeed $\diver \, \curl = 0$, hence $\frac{\partial}{\partial t}\diver(\bm{D}) = \diver(\curl(\bm{H})) = 0$.
\end{remark}
Let us now define the Lagrange structure operators as: 
$$
P^{SD} =\begin{bmatrix}
    I_3 & 0\\
    0 & I_3
\end{bmatrix} , \qquad S^{SD} = \begin{bmatrix}
	\frac{1}{\varepsilon_0} & 0 \\ 0 & \frac{1}{\mu_0}
\end{bmatrix},
$$ 
the Dirac structure operator:
$$
J^{SD} := \begin{bmatrix}
	0 & \curl \\ - \curl & 0
\end{bmatrix},
$$ 
the control operator:
$$
B^{SD} = \begin{bmatrix}
-I_3 & 0
\end{bmatrix}^\top,
$$ 
and the \emph{distributed} control $u^{SD} = \bm{J}$. In the following, we denote the state and co-state variables of this system as $ \alpha^{SD} = \begin{bmatrix} 
	\bm{D} & \bm{B}
\end{bmatrix} ^\top  $ and $e^{SD}= \begin{bmatrix}
\bm{E} & \bm{H}
\end{bmatrix} ^\top$. The system now reads:
$$
	\begin{cases}
		\partial_t P^{E} \alpha^{SD} = J^{SD} e^{SD} + B^{SD} u^{SD} \, , \\
		e^{SD} = S^{E} \alpha^{SD} \, .
	\end{cases}
$$
Finally, the power balance is:

\begin{theorem}(Maxwell's equations power balance - Stokes-Dirac case)
$$
    \frac{\rm d}{{\rm d}t}H_{EM} = \int_{\partial \Omega} (\bm{H} \times \bm{E}) \cdot \bm{n}\, \d s - \int_\Omega \bm{J} \cdot \bm{E} \, \d x \, .
$$
\end{theorem}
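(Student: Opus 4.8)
The plan is to differentiate the Hamiltonian along trajectories, substitute the dynamics, and then invoke the integration-by-parts (Stokes') identity for the $\curl$ operator to move the interior curl terms to the boundary. First I would compute the time derivative of $H_{EM}$. Since the constitutive relations give $\frac{1}{\varepsilon_0}\bm{D} = \bm{E}$ and $\frac{1}{\mu_0}\bm{B} = \bm{H}$, differentiating under the integral sign yields
$$
\frac{\rm d}{{\rm d}t} H_{EM} = \int_\Omega \frac{1}{\varepsilon_0} \bm{D}\cdot\partial_t\bm{D} + \frac{1}{\mu_0}\bm{B}\cdot\partial_t\bm{B} \,\d x = \int_\Omega \bm{E}\cdot\partial_t\bm{D} + \bm{H}\cdot\partial_t\bm{B} \,\d x \, .
$$

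Next I would substitute the dynamics~\eqref{eqn:maxwell-dirac}, namely $\partial_t\bm{D} = \curl\bm{H} - \bm{J}$ and $\partial_t\bm{B} = -\curl\bm{E}$, to obtain
$$
\frac{\rm d}{{\rm d}t} H_{EM} = \int_\Omega \bm{E}\cdot\curl\bm{H} - \bm{H}\cdot\curl\bm{E} \,\d x - \int_\Omega \bm{E}\cdot\bm{J}\,\d x \, .
$$
The term $-\int_\Omega \bm{E}\cdot\bm{J}\,\d x$ is already the distributed control contribution appearing in the statement, so it can be set aside. The remaining work is to identify the first integral with the boundary term $\int_{\partial\Omega}(\bm{H}\times\bm{E})\cdot\bm{n}\,\d s$.

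This is handled by the vector-calculus identity $\diver(\bm{H}\times\bm{E}) = \bm{E}\cdot\curl\bm{H} - \bm{H}\cdot\curl\bm{E}$, which is exactly the Stokes' identity for the $\curl$ operator (of the type established in~\ref{apx:stokes-identities} and used throughout to express the boundary power ports). Applying the divergence theorem then gives $\int_\Omega \bm{E}\cdot\curl\bm{H} - \bm{H}\cdot\curl\bm{E} \,\d x = \int_{\partial\Omega}(\bm{H}\times\bm{E})\cdot\bm{n}\,\d s$, and combining with the dissipation term yields the claimed power balance. The computation is entirely routine once the correct Stokes' identity is in hand; the only point requiring care is the sign and orientation in that identity, since swapping $\bm{E}$ and $\bm{H}$ flips the sign of the cross product, so I would double-check that the antisymmetric pairing $\bm{E}\cdot\curl\bm{H}-\bm{H}\cdot\curl\bm{E}$ matches $\diver(\bm{H}\times\bm{E})$ rather than $\diver(\bm{E}\times\bm{H})$.
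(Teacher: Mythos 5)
Your proposal is correct and follows essentially the same route as the paper: the paper proves this power balance through the $\curl$--$\curl$ Stokes' identity of its appendix, whose own proof is exactly your step $\int_{\partial\Omega}(\bm{\psi}\times\bm{\phi})\cdot\bm{n}\,\d s = \int_\Omega \diver(\bm{\psi}\times\bm{\phi})\,\d x$ followed by the vector-calculus identity, so your computation (differentiate $H_{EM}$, substitute the dynamics, move the antisymmetric curl pairing to the boundary) reproduces the intended argument, including the correct sign $\diver(\bm{H}\times\bm{E})=\bm{E}\cdot\curl\bm{H}-\bm{H}\cdot\curl\bm{E}$.
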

\begin{remark}
    Defining the Poynting vector as $\bm{\Pi} := \bm{E} \times \bm{H}$, one can rewrite the power balance as:
$$
    \frac{\rm d}{{\rm d}t}H_{EM} = - \int_{\partial \Omega} \bm{\Pi} \cdot \bm{n} \, \d s - \int_\Omega \bm{J} \cdot \bm{E} \, \d x \, ,
$$
which shows that such a vector corresponds to the energy flux through the boundary of the considered domain.
\end{remark}
\begin{remark}
    Note that $\bm{J}$ is usually decomposed in two parts, $\bm{J} = \bm{J}_u + \bm{J}_r$, with $\bm{J}_u$ the controlled current and $\bm{J}_r = \sigma \bm{E},$ being the induced current, with $\sigma \geq0$ the conductivity, leading to the additional dissipative port $(\bm{E}, \bm{J}_r)$. With such a distributed dissipation, the power balance more precisely reads:
$$
\begin{aligned}
    \frac{\rm d}{{\rm d}t}H_{EM} &= \int_{\partial \Omega} (\bm{H} \times \bm{E}) \cdot \bm{n} \, \d s - \int_\Omega \bm{J}_u \cdot \bm{E} \, \d x - \int_\Omega \sigma |\bm{E}|^2 \, \d s \, , \\
    &\leq \int_{\partial \Omega} (\bm{H} \times \bm{E}) \cdot \bm{n} \, \d s - \int_\Omega \bm{J}_u \cdot \bm{E} \, \d x \, .
\end{aligned}
$$
\end{remark}

\subsubsection{Stokes-Lagrange representation}

Let us now write the system using the Stokes-Lagrange representation. It turns out that such a representation exactly coincides with the vector potential formulation of the Maxwell's equations. To construct it, let us assume that $\Omega$ is simply connected, then provided that $\diver(B) = 0$, there exists an $\bm{A}$ such that:
$$
        \curl(\bm{A}) = \bm{B} \, ,
$$
which allows us to rewrite the Hamiltonian as:
$$
    H_{EM}^{SL} = \int_\Omega \frac{1}{\mu_0} |\curl(A)|^2 + \frac{1}{\varepsilon_0} | \bm{D}|^2 \, \d x \, ,
$$
and the system can be written using a Stokes-Lagrange formulation as:
\begin{equation} \label{eqn:maxwell-lagrange}
	\begin{bmatrix}
		\partial_t \bm{D} \\
		 \partial_t  \bm{A}
	\end{bmatrix} = \begin{bmatrix}
		0 &I_3 \\ - I_3 & 0
	\end{bmatrix}
	\begin{bmatrix}
		\bm{E} \\ \delta_{\bm{A}} H
	\end{bmatrix} - \begin{bmatrix}
		\bm{J} \\ 0
	\end{bmatrix}.
\end{equation}
 Let us note $J^{SL} = \begin{bmatrix}
	0 & I_3 \\
	- I_3 & 0
\end{bmatrix}$, $ P^{SL} = \begin{bmatrix}
I_3 & 0 \\ 0 & 1I_3
\end{bmatrix}$, $ S^{SL} = \begin{bmatrix}
I_3 & 0 \\ 0 & \curl ( \frac{1}{\mu_0} \curl\cdot  )
\end{bmatrix}$, and $ B^{SL} = \begin{bmatrix}
-I_3 & 0
\end{bmatrix}$.
We will denote the associated state and effort variables as: $\alpha^{SL} = \begin{bmatrix}
	\bm{D} & \bm{A}
\end{bmatrix} ^ \top$ and $e^{SL} = \begin{bmatrix}
\delta_{\bm{D}} H_{EM}^{SL}&\delta_{\bm{A}} H_{EM}^{SL}
\end{bmatrix}  ^ \top $. 
The Stokes-Lagrange formulation now reads:
$$
    \begin{cases}
		\partial_t P^{I} \alpha^{SL} = J^{SL} e^{SL} + B^{I} u \, , \\
		e^{SL} = S^{I} \alpha^{SL} \, .
	\end{cases}
$$
Finally, the power balance of this system is:
\begin{theorem}(Maxwell's equations power balance - Stokes-Lagrange case)
    $$
        \frac{\rm d}{{\rm d}t} H_{EM}^{SL} = \int_{\partial \Omega} \frac{1}{\mu_0}\curl(\bm A) \times \partial_t \bm A \cdot \bm{n} \, \d s - \int_{\partial \Omega} \bm{J} \cdot \bm{E}\, \d s \, .
    $$
\end{theorem}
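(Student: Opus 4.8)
The plan is to differentiate $H_{EM}^{SL}$ along a trajectory and then use the curl integration-by-parts identity (of~\ref{apx:stokes-identities}) to transfer the spatial derivative off the time-differentiated magnetic term and onto the dynamics. First I would compute, using $\partial_t(\curl(\bm A))=\curl(\partial_t\bm A)$,
$$
\frac{\rm d}{{\rm d}t}H_{EM}^{SL} = \int_\Omega \frac{1}{\mu_0}\curl(\bm A)\cdot\curl(\partial_t\bm A) + \frac{1}{\varepsilon_0}\bm D\cdot\partial_t\bm D\,\d x \, ,
$$
and abbreviate $\bm H:=\frac{1}{\mu_0}\curl(\bm A)$ and $\bm E:=\frac{1}{\varepsilon_0}\bm D$, so that the effort relation $e^{SL}=S^{SL}\alpha^{SL}$ supplies $\delta_{\bm A}H_{EM}^{SL}=\curl(\bm H)$ and $\delta_{\bm D}H_{EM}^{SL}=\bm E$.

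Next I would read off the dynamics from~\eqref{eqn:maxwell-lagrange}: the second row gives $\partial_t\bm A=-\bm E$, and the first row gives $\partial_t\bm D=\curl(\bm H)-\bm J$, equivalently $\curl(\bm H)=\partial_t\bm D+\bm J$. Applying the Stokes identity for $\curl$,
$$
\int_\Omega \bm H\cdot\curl(\partial_t\bm A)\,\d x = \int_\Omega \partial_t\bm A\cdot\curl(\bm H)\,\d x + \int_{\partial\Omega}(\partial_t\bm A\times\bm H)\cdot\bm n\,\d s \, ,
$$
to the magnetic term and substituting $\partial_t\bm A=-\bm E$ and $\curl(\bm H)=\partial_t\bm D+\bm J$, the volume part of this term becomes $-\int_\Omega \bm E\cdot\partial_t\bm D\,\d x-\int_\Omega\bm E\cdot\bm J\,\d x$.

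Then I would add the electric contribution $\int_\Omega\frac{1}{\varepsilon_0}\bm D\cdot\partial_t\bm D\,\d x=\int_\Omega\bm E\cdot\partial_t\bm D\,\d x$ and observe that the two $\int_\Omega\bm E\cdot\partial_t\bm D$ terms cancel exactly. This cancellation is the structural heart of the computation and reflects the skew-symmetry of $J^{SL}$. What survives is the distributed source term $-\int_\Omega\bm J\cdot\bm E\,\d x$ (matching the distributed input already present in the Stokes-Dirac case) together with the boundary term $\int_{\partial\Omega}(\partial_t\bm A\times\bm H)\cdot\bm n\,\d s$, which after substituting $\bm H=\frac{1}{\mu_0}\curl(\bm A)$ is the claimed energy port on $\partial\Omega$.

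The main obstacle I anticipate is the careful bookkeeping of signs and orientation in the curl Stokes identity: the triple product $(\partial_t\bm A\times\bm H)\cdot\bm n$ must be matched to the stated $\frac{1}{\mu_0}\curl(\bm A)\times\partial_t\bm A\cdot\bm n$, which requires tracking the antisymmetry $\bm a\times\bm b=-\bm b\times\bm a$ together with the orientation convention fixed for the integration-by-parts formula in~\ref{apx:stokes-identities}. Once the convention is pinned down the remainder is routine; the conceptual point worth emphasising is that, exactly as in Remark~\ref{rmk:power-balance-wave-equation}, the boundary port here is carried by $\partial_t\bm A$, a time derivative of a state variable, rather than by a trace of a co-state variable, which is the characteristic signature of the Stokes-Lagrange representation.
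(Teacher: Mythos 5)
Your proof is correct and is exactly the intended argument: differentiate the (tacitly $\tfrac12$-normalized) Hamiltonian, use the $\curl$--$\curl$ Stokes identity of~\ref{apx:stokes-identities} with $\bm\phi=\bm H$, $\bm\psi=\partial_t\bm A$, substitute the dynamics $\partial_t\bm A=-\bm E$, $\curl(\bm H)=\partial_t\bm D+\bm J$, and let the two $\int_\Omega \bm E\cdot\partial_t\bm D$ terms cancel by skew-symmetry of $J^{SL}$; the paper offers no other proof, and none simpler exists. One point you should not leave hanging, though: the ``orientation bookkeeping'' you defer at the end cannot actually be pinned down in favour of the printed statement. Your derivation yields the boundary density $(\partial_t\bm A\times\bm H)\cdot\bm n$, and since $\partial_t\bm A=-\bm E$ this equals
$$
(\partial_t\bm A\times\bm H)\cdot\bm n=(\bm H\times\bm E)\cdot\bm n\,,
$$
which is precisely the boundary term of the Stokes--Dirac power balance, as it must be because the two Hamiltonians coincide under $\bm B=\curl(\bm A)$. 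The expression printed in the theorem, $\frac{1}{\mu_0}\curl(\bm A)\times\partial_t\bm A\cdot\bm n=(\bm H\times\partial_t\bm A)\cdot\bm n$, is the \emph{negative} of this, so the statement as printed carries a typo in the order of the cross product (the correct reading is $\partial_t\bm A\times\frac{1}{\mu_0}\curl(\bm A)\cdot\bm n$); your cross-check against the Stokes--Dirac case settles the sign, and you should state that conclusion explicitly rather than attribute the mismatch to an unfixed convention. Two further printing slips your computation silently (and correctly) repairs: $H_{EM}^{SL}$ must carry the factor $\tfrac12$ for your first display to hold, and the current term is a volume integral $\int_\Omega \bm J\cdot\bm E\,\d x$, not an integral over $\partial\Omega$.
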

\subsubsection{Passing from one representation to the other}
Following Subsection~\ref{subsec:operator-transposition}, let us define $G := \begin{bmatrix}
1& 0 \\ 0 & \curl
\end{bmatrix}$ an operator from $L^2(\Omega, \mathbb{R}^N) \times H^\curl(\Omega)$ to $L^2(\Omega, \mathbb{R}^N) \times L^2(\Omega, \mathbb{R}^N)$. This operator is \textbf{not} invertible in general but by restricting ourselves to the subspace $ \mathcal{\bm{K}} = \{ \bm{B} \in H^\curl(\Omega), \diver(\bm{B})  = 0\}$ and assuming that the domain $\Omega$ is simply connected, allows us to consider $\curl$ as invertible~\cite{arnold2018finite}. Then:
$$
    \begin{cases}
		G \alpha ^{SL} = \alpha ^{SD} \, , \\
		e^{SL} = G^\dag e^{SD} \, .
	\end{cases}
$$
And we have the following relations:
$$ 
J^{SD} = G J^{SL} G^\dag \, , \qquad S^{SL} = G S ^{SD} G^\dag \, , \qquad P^{SL} = G P^{SD} G^{-1} \, .
$$

\subsubsection{Low-frequency approximation}

The low-frequency approximation (LF) is a usual assumption when considering technical applications~\cite{buffa2000justification} (\textit{e.g.} electrical machines). It consists in neglecting the displacement currents $\partial_t \bm D = 0$, the Stokes-Dirac representation then reads:
\begin{equation} \label{eqn:maxwell-dirac-lowfrequency}
        \begin{bmatrix}
            \bm{0}\\
            \partial_t \bm{B}
        \end{bmatrix} = \begin{bmatrix}
            0 & \curl \\ \curl & 0
        \end{bmatrix} \begin{bmatrix}
            \bm{E} \\ \bm{H}
        \end{bmatrix} - \begin{bmatrix}
            \bm{J} \\ 0
        \end{bmatrix},
\end{equation}
and the Stokes-Lagrange representation:
\begin{equation} \label{eqn:maxwell-lagrange-lowfrequency}
    \begin{bmatrix}
            \bm{0} \\
            \partial_t \bm{A}
        \end{bmatrix} = \begin{bmatrix}
            0 & 1 \\ -1 &0
        \end{bmatrix} \begin{bmatrix}
            \bm{E} \\ \delta_{\bm{A}}H^{SL}
        \end{bmatrix} - \begin{bmatrix}
            \bm{J} \\ 0
        \end{bmatrix}.
\end{equation}
\begin{remark}
    This representation has been studied in~\cite{reis2023passivity} when setting $\bm{J} = \sigma \bm{E}$ and solving the constraint $\delta_{\bm{A}} H^{SL} - \bm{J} = 0$, yielding the diffusion equation $\sigma \partial_t \bm{A} - \curl(\frac{1}{\mu} \curl(\bm{A})) = 0$.
\end{remark}

\begin{theorem}
    Denoting by $\Pi= \text{Diag} \begin{bmatrix}
    \bm{0} & \Id
\end{bmatrix}$ the projection operator, one gets the commutative diagram:% of Figure~\ref{fig:Maxwell-com-diag}.
\begin{figure}[ht]
	\centering
	
	\begin{tikzpicture}
		\draw node at (-4,0) {Stokes-Dirac \eqref{eqn:maxwell-dirac}};
		\draw node at (4,0) {Stokes-Lagrange \eqref{eqn:maxwell-lagrange}};
		\draw node at (-4,-2) {LF Stokes-Dirac \eqref{eqn:maxwell-dirac-lowfrequency}};
		\draw node at (4,-2) {LF Stokes-Lagrange \eqref{eqn:maxwell-lagrange-lowfrequency}};

		\draw node at (-4.5,-1) {$\Pi$};
		\draw node at (4.5,-1) {$\Pi$};

		\draw node at (0,.5) {$G$};
		\draw node at (0,-2.5) {$G$};

		\draw[<->,>=latex] (-1,0) --(1,0);
		\draw[<->,>=latex] (-1,-2) --(1,-2);
		
		\draw[->,>=latex] (-4,-.5) --(-4,-1.5);
		\draw[->,>=latex] (4,-.5) --(4,-1.5);
	\end{tikzpicture}
 \caption{Commutative diagram showing the relations between the two representations of the Maxwell's equation and their corresponding LF approximations.}
 \label{fig:Maxwell-com-diag}
\end{figure}
\end{theorem}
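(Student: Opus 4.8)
The plan is to recognize the two horizontal arrows as instances of the operator-transposition equivalence of Theorem~\ref{thm:dirac-lagrange-equiv}, the two vertical arrows as the low-frequency projection $\Pi$, and then to reduce commutativity of the square to the single algebraic identity $\Pi G = G \Pi$. First I would record, from the preceding subsection, the transposition relations tying the full representations \eqref{eqn:maxwell-dirac} and \eqref{eqn:maxwell-lagrange}: $\alpha^{SD} = G\alpha^{SL}$, $e^{SL} = G^\dag e^{SD}$, $J^{SD} = G J^{SL} G^\dag$, $S^{SL} = G S^{SD} G^\dag$ and $P^{SL} = G P^{SD} G^{-1}$, where $G = \text{Diag}(I_3, \curl)$. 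A useful preliminary observation is that $\curl$ is formally self-adjoint (its Stokes identity pairs it with itself up to a boundary term), so here $G^\dag = G$; this is exactly why the same operator $G$ can label both horizontal arrows.

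Next I would treat the vertical arrows. The low-frequency approximation sets $\partial_t \bm{D} = 0$; equivalently it replaces the flow vector $\partial_t\alpha$ on the left-hand side by $\Pi\,\partial_t\alpha$, turning the first line into the algebraic constraint $0 = \curl\bm{H} - \bm{J}$ while leaving the structure operator $J$, the constitutive operators $S,P$, and the source term untouched. Thus, on both the Stokes-Dirac and Stokes-Lagrange sides, the downward arrow is simply left-multiplication of the dynamics by $\Pi = \text{Diag}(\bm{0}, \Id)$, producing \eqref{eqn:maxwell-dirac-lowfrequency} and \eqref{eqn:maxwell-lagrange-lowfrequency} respectively.

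Commutativity then follows from the block structure. Since $G = \text{Diag}(I_3,\curl)$ and $\Pi = \text{Diag}(\bm{0}, \Id)$ are block-diagonal with respect to the same splitting, $\Pi G = \text{Diag}(\bm{0}, \curl) = G\Pi$, and likewise $\Pi$ commutes with $G^\dag = G$ and with $G^{-1}$. Starting from \eqref{eqn:maxwell-dirac}, going right then down gives $\Pi\,\partial_t\alpha^{SL} = J^{SL}e^{SL} + B^{SL}u$; going down then right, I would use $\Pi\,\partial_t\alpha^{SD} = \Pi G\,\partial_t\alpha^{SL} = G\Pi\,\partial_t\alpha^{SL}$ together with $J^{SD} = GJ^{SL}G^\dag$, $e^{SL} = G^\dag e^{SD}$ and $B^{SL} = G^{-1}B^{SD}$, and then peel off the leftmost $G$ to recover the identical system \eqref{eqn:maxwell-lagrange-lowfrequency}. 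The same manipulation, carried on the constitutive relation $e = S\alpha$, propagates $S^{SL} = GS^{SD}G^\dag$ and $P^{SL} = GP^{SD}G^{-1}$ to the projected systems, closing all four edges.

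The delicate point, and the main obstacle, is the invertibility of $G$, i.e.\ of $\curl$: it holds only after restricting to the divergence-free subspace $\mathcal{K} = \{\bm{B}\in H^{\curl}(\Omega)\,:\,\diver\bm{B}=0\}$ of a simply connected $\Omega$, where the vector potential $\bm{A}$ is recovered from $\bm{B}$. I would therefore verify that $\Pi$ preserves $\mathcal{K}$ along the bottom arrow, which it does, since the projected Faraday law keeps $\diver(\partial_t\bm{B}) = -\diver\,\curl\bm{E} = 0$, so that $G^{-1}$ remains available. One should also check that the source transforms consistently, namely $B^{SL}u = G^{-1}B^{SD}u$ with $B^{SD}u = -(\bm{J},\,0)^\top$ unaffected by either $\Pi$ or $G^{-1}$ (both act as the identity on the first block), which confirms that the constraint line matches on both routes and the square indeed commutes.
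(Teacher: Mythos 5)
Your proposal is correct and takes essentially the route the paper intends: the theorem is stated without an explicit proof, resting on the transposition relations $\alpha^{SD}=G\alpha^{SL}$, $e^{SL}=G^\dag e^{SD}$, $J^{SD}=GJ^{SL}G^\dag$ from the preceding subsection, and your reduction of the square to the block-diagonal commutation $\Pi G = G\Pi$ (together with the observations that $G^\dag=G$ since $\curl$ is formally self-adjoint, that the LF step is left-multiplication of the flow by $\Pi$, and that invertibility of $\curl$ survives projection because the dynamics preserve $\diver\bm{B}=0$) supplies exactly the computation the paper leaves implicit. One minor remark: your argument keeps the bottom-left entry of the LF Stokes--Dirac structure operator as $-\curl$, consistent with \eqref{eqn:maxwell-dirac}, whereas \eqref{eqn:maxwell-dirac-lowfrequency} prints $+\curl$; your sign convention is the consistent one, and the diagram commutes only with it.
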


\subsubsection{Maxwell's equations on a 2D domain}

The vector potential formulation of the Maxwell's equation is particularly useful when considering 2D domains~\cite[Chap. 9]{assous2018mathematical} (\textit{e.g.} a rotor/stator setup). Let us consider the electromagnetic field to lie parallel to the horizontal plane $\bm{B} = \begin{bmatrix}
    B_x & B_y &0
\end{bmatrix}^\top$ and the electric field to be vertical $\bm{D} = \begin{bmatrix}
    0 & 0 & D_z
\end{bmatrix}^\top$, denoting the reduced operators:
$$
    \begin{cases}
        \grad^\perp (v) := \begin{bmatrix}
            - \partial_x v \\
            \partial_y v
        \end{bmatrix},  \\
        \curldeuxD (\bm{w}) := \partial_x w_y - \partial_y w_x \, ,
    \end{cases}
$$
along with the reduced variable $\bm{B} = \begin{bmatrix}
    B_x & B_y
\end{bmatrix}^\top$, $E = E_z$, the Maxwell's equations in Transverse Electric (TE) mode read:
$$
    \begin{bmatrix}
        \partial_t D \\
        \partial_t \bm{B}
    \end{bmatrix} = \begin{bmatrix}
        0 & \curldeuxD \\
        - \grad^\perp & 0
    \end{bmatrix} \begin{bmatrix}
        E \\ \bm{H}
    \end{bmatrix} - \begin{bmatrix}
        J \\ 0
    \end{bmatrix}.
$$
When the vector potential formulation is considered, we get: $\bm{B} = \grad^\perp A$, hence the vector potential $A$ is now a scalar field instead of a vector field; and the vector potential formulation reads:
$$
    \begin{bmatrix}
        \partial_t D \\
        \partial_t A
    \end{bmatrix} = \begin{bmatrix}
        0 & 1 \\
        -1 & 0
    \end{bmatrix} \begin{bmatrix}
        E \\ \delta_A H
    \end{bmatrix} - \begin{bmatrix}
        J  \\ 0
    \end{bmatrix},
$$
with $\delta_AH = \curldeuxD \left( \frac{1}{\mu} \grad^\perp A \right)$.
\begin{remark}
    Even if these representations are equivalent at the continuous level, they might not be from a modelling perspective. When discretizing the system; one requires only scalar-valued functions in the vector potential formulation, compared to vector-valued function in the classical case, which reduces memory usage (see~\cite{cardoso2024port,haine2021incompressible} for a similar approach applied to fluid dynamics).
\end{remark}

\subsection{Dzektser equation}

As a last example, let us present a nonlocal model, namely the Dzektser equation~\cite{Dzektser72}. Let us consider a $2D$ domain $\Omega$ and $h(t,x,y)$ the (vertical) hydraulic head defined at each point of the domain, the Dzektser equation describes the evolution of the hydraulic head $h$ of underground water while neglecting  momentum, yielding a diffusion-like equation. Contrarily to the Boussinesq equation, it takes into account second order term when modelling the water column which gives us a nonlocal term in the equation. This gives us a linear system depending on two damping parameters $a,b \geq0$ and a parameter of \emph{nonlocality} $\mu \geq 0$, that reads:
$$
(1-\mu \Delta) \partial_t h = a \Delta h - b \Delta^2 h \, ,
$$
and the pH formulation can be formulated as~\cite{bendimerad2023implicit}:
$$
    \begin{bmatrix}
        1 - \mu \Delta & 0  & 0\\
        0 & 1  & 0\\
        0 & 0 & 1
    \end{bmatrix} \begin{bmatrix}
        \partial_t h \\
        \bm{F}_\nabla \\ F_\Delta
    \end{bmatrix} = \begin{bmatrix}
        0 & \diver & \Delta \\ \grad & 0 & 0\\
        -\Delta & 0 & 0
    \end{bmatrix} \begin{bmatrix}
        h \\ \bm{E}_\nabla \\ E_\Delta
    \end{bmatrix}, \quad
    \begin{cases}
        \bm{E}_\nabla = a \bm{F}_\nabla \, , \\
        E_\Delta = b F_\Delta \, ,
    \end{cases}
$$
with $(\bm{F}_\nabla, \bm{E}_\nabla)$, $(F_\Delta, E_\Delta)$ the two dissipative ports. The associated Hamiltonian is:
$$
H_D = \frac{1}{2} \int_\Omega h^2 + \mu |\grad h |^2 \, \d x \, .
$$
Note that the operator $(1 - \mu \Delta)$ with $\Delta = \frac{\partial^2}{\partial x^2} + \frac{\partial^2}{\partial y^2}$ from $H^1_0(\Omega)$ to $L^2$ is invertible; in particular it does not introduce constraints to the dynamics.

Then, one can define the Stokes-Lagrange operators as, $P = 1 - \mu \Delta$, $S = \Id$, and the Hamiltonian reads:
$$
    H_D = \frac{1}{2} \int_\Omega h^2 + \mu |\grad h |^2 \, \d x = \frac{1}{2} \langle S\,h, P\,h \rangle + \frac{1}{2} \int_{\partial_\Omega} \underbrace{h}_{=: \gamma(h)} \underbrace{\mu \grad h \cdot n}_{ =:\beta(h)} \, \d s \, .  
$$

\iffalse
\begin{remark}
Note that the Stokes-Lagrange framework allows us to correctly find the Hamiltonian of this system; even though it is implicit (an unbounded linear operator is in front of the time derivative). \red{Cette remarque manque de clareté. On se demande ce qu'elle approte puisqu'on est dans la section exemples de toute façon.}
\end{remark}
\fi

\section{Application to control systems.} \label{sec:control}

One of the key features of port-Hamiltonian systems is the passivity of the power control port. More precisely, the natural boundary controls of a system are obtained (\textit{e.g.} force/velocity, flow/pressure) and allow the use of passivity to design controllers and interconnect pH system through their Dirac structures~\cite{ortega2008control}. In a recent work~\cite{borja2023interconnection}, it was shown that another type of boundary control could be useful; those are defined as the time integral of a passive output or time derivative of a passive output. Let us show that the energy control ports allow us to design such interconnection and that choosing between the two proposed Hamiltonians allows for choosing between the time integral of the passive output or the time derivative of the passive input. As a result, one can interconnect the two systems using a non-separable Hamiltonian (see Definition~\ref{def:separable-hamiltonian}). Such property is valuable as the usual power port interconnection of two pH systems yields an interconnected pH system whose Hamiltonian is the sum of the Hamiltonian of the two subsystem; adding a non separable term allows for a broader set of interconnections.

\begin{definition}\label{def:separable-hamiltonian}(Separable Hamiltonian) Given a latent state $z = \begin{bmatrix}
    z_1 & z_2
\end{bmatrix}^\top \in \mathcal{Z} = \mathcal{Z}_1 \times \mathcal{Z}_2$, a Hamiltonian $H: \mathcal{Z} \rightarrow \mathbb{R}$ is called \emph{separable}, if there exist $H_1:\mathcal{Z}_1 \rightarrow \mathbb{R}$ and $H_2:\mathcal{Z}_2 \rightarrow \mathbb{R}$ (corresponding to the Hamiltonian functionals of the two subsystems), such that:
$$ H(z_1,z_2) = H_1(z_1) + H_2(z_2).$$
In particular, $\delta_zH(z) = 0$ if and only if $\delta_{z_1}H_1(z_1) = 0$ and $\delta_{z_2}H_2(z_2) = 0$.
\end{definition}

Firstly, we will recall the interconnection of two pH systems through power ports, then show how this can be performed using energy ports. The latter has already been considered in the recent work~\cite{schaft2024boundary}; here we will extend it to both methods proposed in~\cite{borja2023interconnection}.

\subsection{Stokes-Dirac interconnection}

Let us take two port-Hamiltonian systems implicitly defined on a Stokes-Dirac structure as follows:
$$
    \begin{bmatrix}
        f^i\\ y^i \\e^i \\u^i
    \end{bmatrix} \in \mathcal{D}^i \subset \mathcal{F}^i \times \mathcal{Y} \times \mathcal{E}^i \times \mathcal{U} \, , \qquad 
    f^i = - \partial_t \alpha^i \, , \qquad 
    e^i = \delta_{\alpha^i} H^i(\alpha^i) \, ,
$$
for $i \in \{1,2\}$. Note that the control $\mathcal{U}$ and observation $\mathcal{Y}$ spaces are assumed to be identical. Then, assuming an interconnection of the form:
$$
    \begin{bmatrix}
        u^1 \\u^2
    \end{bmatrix} = \begin{bmatrix}
        0 & \Id \\ - \Id & 0
    \end{bmatrix} \begin{bmatrix}
        y^1 \\ y^2
    \end{bmatrix} + \begin{bmatrix}
        \nu^1 \\ \nu^2
    \end{bmatrix},
$$
the system is (cyclo-)passive with respect to the composed Hamiltonian $\tilde H = H^1(\alpha^1) + H^2(\alpha^2)$:
$$
    \frac{\rm d}{{\rm d}t} \tilde H = \langle \nu^1,y^1\rangle_{\mathcal{U}, \mathcal{Y}} + \langle \nu^2,y^2\rangle_{\mathcal{U}, \mathcal{Y}} \, .
$$
In particular, $H^1$ and $H^2$ only depends on $\alpha^1$ and $\alpha^2$ respectively and no cross-terms are present: the Hamiltonian $\tilde H$ is said to be \emph{separable}.
\subsection{Stokes-Lagrange interconnection}
Let us now take for $i \in \{1,2\}$, two pH systems implicitly defined on a Stokes-Lagrange structure as follows:
$$
    \begin{bmatrix}
        f^i\\e^i
    \end{bmatrix} \in \mathcal{D}^i \subset \mathcal{F}^i  \times \mathcal{E}^i \, , \qquad 
    f^i = - \partial_t \alpha^i \, , \qquad 
    \begin{bmatrix}
        \alpha^i \\ \chi^i \\ e^i \\ \varepsilon^i
    \end{bmatrix} \in \mathcal{L}^i \subset \mathcal{X}^i \times \mathcal{Y} \times \mathcal{E}^i \times \mathcal{U} \, ,
$$
along with the Stokes-Lagrange operators $P^i$, $S^i$, and $\gamma^i$, $\beta^i$. These two systems are defined along with some energy ports which we will use for the interconnection \cite{krhac2024port}.

Denoting by $z^i$ the latent space variable, and following~\eqref{eqn:Hamiltonian-Lagrange-Definition}, their respective Hamiltonian reads either: 
$$
\begin{aligned}
    & H^i_+(z^i) = \frac{1}{2} \langle S^i z^i, P^iz^i \rangle + \frac{1}{2} \langle \beta^i (z^i), \gamma^i (z^i) \rangle \, , \\
    \text{or} \qquad  & 
    H^i_-(z^i) = \frac{1}{2} \langle S^i z^i, P^iz^i \rangle - \frac{1}{2} \langle \beta^i (z^i), \gamma^i (z^i) \rangle \, .
\end{aligned}
$$

\paragraph{Interconnection through the integration of a passive output}
When the plus Hamiltonian $H_+^i$ is chosen, the system is (cyclo-)passive with respect to $( \varepsilon^i,\partial_t \chi^i)$, \textit{i.e.} $\frac{\rm d}{{\rm d}t} H^i_+ = \langle \varepsilon^i, \partial_t \chi^i \rangle_{\mathcal{U}, \mathcal{Y}}$. Following~\cite{borja2023interconnection}, let us denote by $\phi_T: \mathcal{Y} \times \mathcal{Y} \rightarrow \mathbb R$ a differentiable potential, then let us define the following interconnection scheme as:
$$
    \begin{bmatrix}
        \varepsilon^1 \\
        \varepsilon^2
    \end{bmatrix} = - \begin{bmatrix}
        \Id & 0\\
        0 & \Id
    \end{bmatrix} \begin{bmatrix}
        \delta_{\chi^1} \phi_T(\chi^1,\chi^2) \\
        \delta_{\chi^2} \phi_T(\chi^1,\chi^2)
    \end{bmatrix} + \begin{bmatrix}
        \nu^1 \\ \nu^2
    \end{bmatrix}.
$$
\begin{remark}
    Note that in this case, the interconnection is carried on the whole control/interconnection space. One could also consider an interconnection on a subdomain of the boundary (\textit{e.g.} the right boundary of a rectangular plate interconnected with the left boundary of a rectangular plate).
\end{remark}
As a result, the interconnected system becomes passive with respect to the Hamiltonian defined as $\tilde H = H^1_+ + H^2_+ + \phi_T$:
$$
    \begin{aligned}
        \frac{\rm d}{{\rm d}t} \tilde H & = \langle \varepsilon^1, \partial_t \chi^1 \rangle_{\mathcal{U},\mathcal{Y}} + \langle \varepsilon^1, \partial_t \chi^1 \rangle_{\mathcal{U},\mathcal{Y}} \\
        & \qquad + \langle \delta_{\chi^1} \phi_T(\chi^1,\chi^2), \partial_t \chi^1\rangle_{\mathcal{U}, \mathcal{Y}} + \langle \delta_{\chi^2} \phi_T(\chi^1,\chi^2), \partial_t \chi^2 \rangle_{\mathcal{U},\mathcal{Y}} \, , \\
        &= \langle \nu^1, \partial_t \chi^1 \rangle_{\mathcal{U},\mathcal{Y}} + \langle \nu^2, \partial_t \chi^2 \rangle_{\mathcal{U},\mathcal{Y}} \, .
    \end{aligned}
$$
As a consequence, this interconnected system is (cyclo-)passive with respect to an \emph{a priori} non-separable Hamiltonian. In the setting of Control by Interconnection (CBI), \textit{i.e.} when the first system is a physical system and the second a controller, such property can be used to design an interconnection that assigns a desired equilibrium point. However, this method requires the integral of a passive output to be available which is given here by Stokes-Lagrange structure; the passive output being $\partial_t \chi^i$ and its integral $\chi^i$. 

\begin{remark}
    The Stokes-Lagrange structure gives us a direct interpretation of the integral of the passive output as an energy control port. In the case of a mechanical system (\textit{e.g.} Kirchhoff-Love plate model), such an output would be the displacement.
\end{remark}

\begin{example}\label{example:wave-lagrange-interconnection}(Wave equation - continued) In this example we will consider a boundary interconnection using energy port control variables.
    Let us consider two wave equations written in Stokes-Lagrange representation; for $i \in \{1,2\}$, we consider:
    $$
        \frac{\partial}{\partial t} \begin{bmatrix}
            w^i \\ p^i
        \end{bmatrix} = \begin{bmatrix}
            0 & 1 \\ -1 & 0
        \end{bmatrix} \begin{bmatrix}
            N^i \\ v^i
        \end{bmatrix},
    $$
    with the constitutive relations $N^i = - \partial_x E \partial_x w^i$ and $v^i = \frac{1}{\rho}p^i$ and the boundary energy ports:
    $$
        \chi^i = \begin{bmatrix}
            w^i(a) \\ w^i(b)
        \end{bmatrix}, \qquad \varepsilon^i = \begin{bmatrix}
            - E \partial_x w^i(a) \\ E \partial_x w^i(b)
        \end{bmatrix}.
    $$
    Let us now interconnect the wave equation $1$ on the right boundary with the wave equation $2$ on the left boundary thanks to the following potential:
    $$
        V(\chi^1, \chi^2) = V(\chi^1_2,\chi^2_1) = \frac{k}{2}(\chi^1_2 - \chi^2_1)^2 = \frac{k}{2}(w^1(b) - w^2(a))^2 \, ,
    $$
    \begin{figure}[ht]
        \centering
        \begin{tikzpicture}
            \node[draw] (A) at (-4.0,1) {$\chi^1_1,\varepsilon_1^1$};
            \node[draw] (B) at (0,1) {$\chi^1_2,\varepsilon_2^1$};
            \node[draw] (C) at (0.3,-1) {$\chi^2_1,\varepsilon_1^2$};
            \node[draw] (D) at (4.3,-1) {$\chi^2_2,\varepsilon_2^2$};
            \draw (A) -- (B);
            \draw[dashed,thick,,color=red] (B)--(C) node[midway,right] {$V(\chi^1_2,\chi^2_1)  $};
            \draw (C) -- (D);

            \node (E) at (2.3,-1.40) {$w^2,p^2$}; 
            \node (F) at (-2,1.40) {$w^1,p^1$};

            \node (nu12) at (0,2.7) {$\nu^1_2$};
            \node (nu21) at (0.3,-2.5) {$\nu^2_1$};
            \node (nu11) at (-4,2.7) {$\nu^1_1$};
            \node (nu22) at (4.3,-2.5) {$\nu^2_2$};
            \draw[<-,thick,dashed, color=blue] (A) -- (nu11);
            \draw[<-,thick,dashed, color=blue] (D) -- (nu22);

            \draw[<-,thick,dashed, color=blue] (B) -- (nu12);
            \draw[<-,thick,dashed, color=blue] (C) -- (nu21);

            \draw[dotted] (-2.0,1) ellipse (4.25cm and 0.75cm);
            \draw (-4.0,0) node[below left]{Wave eq. 1};
            \draw[dotted] (2.13,-1) ellipse (4.25cm and 0.75cm);
            \draw (4.3,0) node[above right]{Wave eq. 2};
        \end{tikzpicture}
        \caption{Energy port interconnection using a potential $V$.}
        \label{fig:wave-stokes-lagrange-interconnection}
    \end{figure}
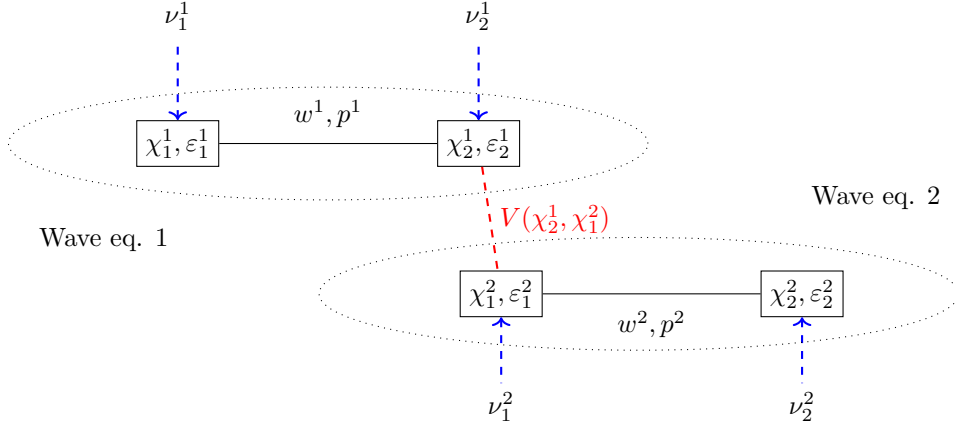
    with $k>0$, see Figure~\ref{fig:wave-stokes-lagrange-interconnection}. This can be interpreted as adding a spring of stiffness $k$ between the two boundaries. This yields the control laws:
        $$
            \begin{bmatrix}
                \varepsilon^1 \\
                \varepsilon^2
            \end{bmatrix} = \begin{bmatrix}
                - \Id & 0\\
                0 & -\Id
            \end{bmatrix} \begin{bmatrix}
                \nabla_{\chi^1} V \\
                \nabla_{\chi^2} V
            \end{bmatrix} + \begin{bmatrix}
                \nu^1 \\ \nu^2
            \end{bmatrix}= k\begin{bmatrix}\begin{bmatrix}
                0\\
                \chi^2_1 - \chi^1_2 
            \end{bmatrix}
                \\ \\
                \begin{bmatrix}
                   \chi^1_2 - \chi^2_1 \\
                0 
                \end{bmatrix}
            \end{bmatrix} + \begin{bmatrix}
                \nu^1 \\ \nu^2
            \end{bmatrix}.
        $$
    Let us now compute the total Hamiltonian $\tilde H = H^1 + H^2 + V$:
    $$
        \tilde H  = \frac{1}{2}\int_a^b \left( E (w^1)^2 + \frac{1}{\rho} (p^1)^2 + E (w^2)^2 + \frac{1}{\rho} (p^2)^2 \right) \, \d x + \frac{k}{2}(w^1(a) - w^2(b))^2 \, ,
    $$
    and the power balance reads:
    $$
        \frac{\rm d}{{\rm d}t} \tilde H = \partial_t \chi^1 \nu^1 + \partial_t \chi^2 \nu^2 \, .
    $$
    In particular, when adding dissipation to the interconnection, \textit{e.g.} $\nu^1_2 = - \gamma \, \partial_t \chi^1_2$ and $\nu^2_1 = - \gamma \, \partial_t \chi^2_1$, with $\gamma > 0$ a scalar, one gets:
    $$
        \frac{\rm d}{{\rm d}t} \tilde H \leq 0 \, .
    $$
\end{example}

\paragraph{Interconnection through differentiation of a passive output} 
Let us choose $H_-^i$ and consider the following nonlinear operator 
$\Psi: \mathcal{X}^1 \times \mathcal{X}^2 \rightarrow \mathcal{L}(\mathcal{Y}\times\mathcal{Y}, \mathbb{R}), (\alpha^1,\alpha^2) \mapsto \psi[\alpha^1,\alpha^2](\cdot,\cdot)$, and assume that it is differentiable with respect to $\alpha^1$, $\alpha^2$, and denote $\beta_i[\alpha^1,\alpha^2]\{u\} := \delta_{\alpha^i} \psi[\alpha^1,\alpha^2]\{u \} \in \mathcal{L}(\mathcal{Y}\times\mathcal{Y}, \mathbb{R}) $. Then, we get:
$$
    \frac{\rm d}{{\rm d}t} \Psi[\alpha^1, \alpha^2] = \beta_1[ \partial_t  \alpha^1, \alpha^2]\{\partial_t \alpha^1\} + \beta_2[\alpha^1, \partial_t  \alpha^2]\{\partial_t \alpha^2\} \in \mathcal{L}(\mathcal{Y}\times\mathcal{Y}, \mathbb{R}) \, .
$$
Let us now define the controls as:
$$
    \varepsilon^1 =  \Psi[\alpha^1,\alpha^2](\cdot,\chi^2) - \int_0^t \left[ \beta_1[  \alpha^1, \alpha^2]\{\partial_t \alpha^1\}( \cdot, \chi^2) \right] \, \d t + \nu^1 \, , \qquad \in \mathcal{Y}' = \mathcal{U} \, ,
$$
$$
    \varepsilon^2 =  \Psi[\alpha^1,\alpha^2](\chi^1,\cdot) - \int_0^t \left[ \beta_2[\alpha^1, \alpha^2]\{\partial_t \alpha^2\}( \chi^1, \cdot)\right]\, \d t  + \nu^2 \, , \qquad \in \mathcal{Y}' = \mathcal{U} \, .
$$
Then, the coupled system is passive with respect to the total Hamiltonian:
$$
    \tilde H = H_-^1 + H_-^2 + \Psi[\alpha^1, \alpha^2](\chi^1,\chi^2) \, ,
$$
and the power balance reads:
$$
    \begin{aligned}
    \frac{\rm d}{{\rm d}t} \tilde H  & = - \langle \chi^1, \partial_t \varepsilon^1\rangle - \langle \chi^2,  \partial_t \varepsilon^2\rangle 
    + \Psi[\alpha^1, \alpha^2](\partial_t \chi^1,\chi^2) + \Psi[\alpha^1, \alpha^2](\chi^1, \partial_t \chi^2) \\
    & \qquad   + \beta_1[   \alpha^1, \alpha^2]\{\partial_t \alpha^1 \}(\chi^1,\chi^2) + \beta_2[\alpha^1,   \alpha^2]\{\partial_t \alpha^2 \}(\chi^1,\chi^2) \, , \\
    & = - \langle \chi^1, \partial_t \nu^1 \rangle - \langle \chi^2, \partial_t \nu^2\rangle \, .
    \end{aligned}
$$
This method uses the fact that $\varepsilon^i$ is differentiated with respect to time in the power balance; hence it allows us to differentiate the passive output $\chi^i$ with respect to time.
\begin{example}\label{example:piezo}(Piezoelectric effect) In this example we will consider a \emph{distributed interconnection} through distributed \emph{energy control ports}. In particular, we will interpret the piezoelectric effect as a coupling through energy ports, yielding the pH representation found in~\cite{Macchelli2004piezo}.
Let us define the 1D domain $\Omega = [a,b]$ and consider two pH systems; the first being the 1-D wave equation and the second the 1-D Maxwell's equations with $\bm{D} = \begin{bmatrix}
    0 & 0 & D
\end{bmatrix}^\top$ and $\bm{B} = \begin{bmatrix}
    0 & B & 0
\end{bmatrix}^\top$:
$$
    \partial_t\begin{bmatrix}
        \varepsilon \\
        p
    \end{bmatrix} = \begin{bmatrix}
        0 & \partial_x \\
        \partial_x &  0
    \end{bmatrix}\begin{bmatrix}
        \sigma \\ v
    \end{bmatrix}, \qquad \partial_t \begin{bmatrix}
        D \\ B
    \end{bmatrix} = \begin{bmatrix}
        0 & \partial_x \\
        \partial_x & 0
    \end{bmatrix}\begin{bmatrix}
        E \\ H
    \end{bmatrix},
$$
with the two \emph{controlled} Hamiltonian defined as:
$$
\begin{aligned}
    H_W(\varepsilon,p,u_1) &= \int_\Omega \frac{1}{2\rho}p^2 + \frac{k}{2} \varepsilon^2 + \varepsilon u_1 \, \d x \, , 
    \\ H_{EM}(D,B,u_2) &= \int_\Omega \frac{1}{2\varepsilon_0} D^2 + \frac{1}{2\mu_0}B^2 + D u_2 \, \d x \, .
\end{aligned}
$$ 
The constitutive relations are given by:
$$
    \sigma = k \varepsilon + u_1 \, , \qquad v = \frac{1}{\rho}p \, , \qquad E = \frac{1}{\varepsilon_0}D + u_2 \, , \qquad H = \frac{1}{\mu_0}B \, .
$$

$u_1$ corresponds to a nonzero stress-free configuration, meaning that the stress-free configuration is attained for a nonzero deformation. $u_2$ corresponds to a dielectric polarization term (usually denoted by $P$~\cite{kovetz2000electromagnetic}). Controlling these terms allows us to control the stress-free configuration and dielectric polarization of the electric field; and we will use these two controls to interconnect the systems through their Hamiltonian functionals.
The two energy ports read:
$$
    u_1, \chi_1 = \frac{\delta H_{W}}{\delta u^1} = \varepsilon \, , \qquad u_2, \chi_2 = \frac{\delta H_{EM}}{\delta u^2} = D \, .
$$
In particular, the latent state for the wave equation is $z = \begin{bmatrix}
    \varepsilon & p & u_1
\end{bmatrix}$, and the Stokes-Lagrange control/observation operators $\gamma$ and $\beta$ select $u_1$ and $\varepsilon$ respectively: $u_1 = \gamma(z), \quad \varepsilon=\beta(z)$. Moreover, writing the constitutive relations with the energy ports reveals the symmetry:
$$ \begin{bmatrix}
    \sigma \\ v \\ \chi_1
\end{bmatrix} = \begin{bmatrix}
    k & 0 & 1\\
    0 & \frac{1}{\rho} & 0\\
    1 & 0 & 0
\end{bmatrix} \begin{bmatrix}
    \varepsilon \\ p \\ u_1
\end{bmatrix}, \qquad \begin{bmatrix}
    E \\ H \\ \chi_2
\end{bmatrix} = \begin{bmatrix}
    \frac{1}{\varepsilon_0} & 0 & 1\\
    0 & \frac{1}{\mu_0} & 0\\
    1 & 0 & 0
\end{bmatrix} \begin{bmatrix}
    \varepsilon \\ p \\ u_2
\end{bmatrix}. $$

Let us now define $q \in L^\infty(\Omega), \, q(x)\geq0$ a distributed parameter and $\alpha_1 = \begin{bmatrix}
    \varepsilon & p
\end{bmatrix}^\top$, $\alpha_2 = \begin{bmatrix}
    D & B
\end{bmatrix}^\top$ the state variables. Then, let us interconnect the two systems by considering the potential:
$$
    \Psi(\alpha_1,\alpha_2)[\chi_1,\chi_2] := - \langle q \chi_1,\chi_2 \rangle_{L^2} =  - \int_\Omega q \,  \chi_1 \chi_2 \, \d x \, .
$$
In particular, $\beta_1 = \frac{\delta \Psi}{\delta \alpha_1} = 0$, $\beta_2 = \frac{\delta \Psi}{\delta \alpha_2} = 0$.
This gives us the following controls:
$$
    u_1 = -\Psi(\alpha_1,\alpha_2)[\cdot, \chi_2] =  q \, \chi_2 =  q \, D \, , \quad u_2 = - \Psi(\alpha_1,\alpha_2)[\chi_1, \cdot] = q \, \chi_1 =  q \, \varepsilon \, ,
$$
where we used the fact that $\Psi(\alpha_1,\alpha_2)$ is a continuous bilinear function on a Hilbert space to identify $u_1$ and $u_2$ as vectors.
Computing the total Hamiltonian $\tilde H = H_{W} + H_{EM} + \Psi(\alpha_1,\alpha_2)[\chi_1,\chi_2]$ yields:
$$
    \tilde H(\varepsilon,p,D,B) = \frac{1}{2}\int_\Omega \frac{1}{\rho}p + k \varepsilon  + \frac{1}{\varepsilon_0} D^2 + \frac{1}{\mu_0}B^2 + 2q\, D \varepsilon \, \d x \, .
$$
Computing the variational derivatives with respect to $\varepsilon$ and $D$ gives us:
$$
    \sigma = \frac{\delta \tilde H}{\delta \varepsilon} = k \varepsilon + q D \, , \qquad E = \frac{\delta \tilde H}{\delta D} = \frac{1}{\varepsilon_0} D + q \varepsilon \, ,
$$
which corresponds to the model presented in~\cite{macchelli2011energy}.
\end{example}

\begin{remark}
    Both methods are an extension of CBI, in particular and contrarily to CBI, they do not, however, suffer from the Dissipation Obstacle that occurs when using power boundary ports as was shown in~\cite{borja2023interconnection}. The Dissipation Obstacle prevents the stabilization of the system in the subspace where dissipation occurs~\cite{ortega2008control}, \textit{e.g.} non-zero velocity for mechanical systems with dissipation.
\end{remark}

\section{Conclusion \& Outlook}

We presented a definition of the Stokes-Lagrange structure extending the classical definition of a Hamiltonian functional;  this allows for describing classical pH systems in various representation and defining implicit constitutive relations which are necessary when studying \textit{e.g.} \emph{nonlocal} phenomenons. Through a set of examples, namely the Reissner-Mindlin and Kirchhoff-Love plate models, Maxwell's equation of electromagnetism and Dzektser's equation; we showed how these different representations and implicit constitutive relations are obtained. Moreover, we showed how this structure enables the use of energy ports as controls and gives a physical interpretation (\textit{e.g.} boundary displacement) based on the system's operators; which yields a new set of tools to design passivity-based controllers.

Moreover, it is yet to describe the structure-preserving discretization of Stokes-Lagrange structures (see~\cite{brugnoli2019portI} for the Stokes-Dirac case) though some examples have already been treated in $1$D (see~\cite{bendimerad2023implicit}) and an extension to $N$-dimensional case has to be carried out.
These representations coupled with a structure-preserving discretization could be useful when studying crack propagation where nonlocal relations are present; moreover, they will allow the study of nonlocal boundary conditions for the Maxwell's equation in the pH formalism.

%% If you have bibdatabase file and want bibtex to generate the
%% bibitems, please use
%%
\bibliographystyle{siam} 
\bibliography{biblio}

\appendix

\section{Operator transposition} \label{apx:operator-transposition} 

%\red{This appendix is not consistent with the notations of Section 2.3 / 2.4. Furthermore, the density assumption on $\mathcal{W}_0$ can not be added here and should be in Assumption 1. In general, appendix may not add assumptions, this is just the place for technical proofs.}

In this appendix, we will show that the Stokes-Dirac and Stokes-Lagrange representations of the wave-like system \eqref{eqn:sys-K-matrix} are properly defining a Stokes-Dirac and Stokes-Lagrange structures respectively. 

To do so, we will assume the two following hypothesis which follows from Assumption~\ref{eqn:skew-symmetry-assumption} on the operators $\bm{K}, \gamma, \beta$ that allowed us to construct the Stokes-Dirac and Stokes-Lagrange representation and will be used to prove that these are defining Stokes-Dirac and Stokes-Lagrange structures. These are: \begin{itemize}
    \item  $\gamma$ and $\beta$ have dense image in $\mathcal{Y}$ and $\mathcal{U}$ respectively.
    \item $\ker(\gamma)$ and $\ker(\beta)$ are dense  in $L^2(\Omega,\mathbb{R}^n)$ and $L^2(\Omega,\mathbb{R}^m)$ respectively.
    \end{itemize}
%\red{Ces hypothèses sont-elles conséquences de Assumption~1 dans le cas général ? Dans une annexe, on ne peut pas ajouter d'hypothèse, même technique. J'ai l'impression que oui cela dit. Dans ce cas, on pourrait peut-être le dire quand on mentione "following hypothesis".}

\subsection{Stokes-Dirac representation}
Let us consider the Stokes-Dirac representation \eqref{eqn:stokes-dirac-sys-k}:
$$
        \begin{bmatrix}
            \partial_t \alpha^{SD}_1 \\
            \partial_t \alpha^{SD}_2
        \end{bmatrix} = \underbrace{\begin{bmatrix}
            0 &  \bm{K}  \\ -\bm{K}^\dag & 0
        \end{bmatrix}}_{=: J^{SD}} \begin{bmatrix}
            e_1^{SD} \\ e_2^{SD}
        \end{bmatrix}, \qquad
    \begin{cases}
        e^{SD}_1 = \bm{\eta} \, \alpha_1^{SD}, \\
        e^{SD}_2 = \bm{\kappa} \, \alpha_2^{SD},
    \end{cases}
$$
and show that this system is defined on a Stokes-Dirac structure. 

Firstly, we will define the flows and effort spaces as $\mathcal{F}_s = \mathcal{E}_s = L^2(\Omega, \mathbb{R}^m)\times L^2(\Omega, \mathbb{R}^n)$, and $\mathcal{F}_u = \mathcal{Y}$, $\mathcal{E}_u = \mathcal{U}$.

Secondly, let us consider the Stokes-Dirac structure operator $J^{SD}$ along with the control and observation operators $K = \begin{bmatrix}
    0 & \gamma
\end{bmatrix}$ and  $G = \begin{bmatrix}
    \beta & 0
\end{bmatrix}$.

Thirdly, let us define the appropriate spaces: $\mathcal{W}_1 = \mathcal{D}(J^{SD}) \subset \mathcal{E}_s$ and $\mathcal{W}_0 = \ker(G) \cap \ker(K) \subset \mathcal{D}(J^{SD})$. Note that one can deduce that $\mathcal{W}_0 = \ker(\beta) \times \ker(\gamma)$ which is dense in $L^2(\Omega,\mathbb{R}^m) \times L^2(\Omega,\mathbb{R}^n) =  \mathcal{E}_s$ by assumption.

Let us now show that $J^{SD}$, $K$ and $G$ satisfy the three conditions of Assumption~\ref{eqn:skew-symmetry-assumption}:

\begin{enumerate}
    \item Let $x=\begin{bmatrix}
        x_1 \\ x_2
    \end{bmatrix} \in \mathcal{W}_1$, then computing $\left \langle \begin{bmatrix}
                x_1 \\ x_2
            \end{bmatrix}, J^{SD} \begin{bmatrix}
                x_1 \\ x_2
            \end{bmatrix} \right \rangle_{\mathcal{E}_s}$ yields: \begin{equation*}
        \begin{aligned}
            \left \langle \begin{bmatrix}
                x_1 \\ x_2
            \end{bmatrix}, J^{SD} \begin{bmatrix}
                x_1 \\ x_2
            \end{bmatrix} \right \rangle_{\mathcal{E}_s} &= \langle x_1, \bm{K} x_2 \rangle_{L^2} - \langle x_2, \bm{K}^\dag x_1 \rangle_{L^2} \\
            & = \langle \begin{bmatrix}
                \beta & 0
            \end{bmatrix}x, \begin{bmatrix}
                0 & \gamma 
            \end{bmatrix} x \rangle_{\mathcal{E}_u,\mathcal{F}_Y} \\
            &= \langle Gx,Kx \rangle_{\mathcal{E}_u, \mathcal{F}_u}.
        \end{aligned}
    \end{equation*}
    \item $J_0 := J_{|\mathcal{W}_0}$ is densely defined as $\mathcal{W}_0$ is dense in $\mathcal{E}_s$. Let us now prove that  satisfies $J_0^* = - J$. From the block structure of $J$, this gives us the following conditions:
$$
     (\bm{K}^\dag_{|\ker(\beta)})^* = \bm{K} \, , \qquad (\bm{K}_{|\ker(\gamma)})^* = \bm{K}^\dag \, , 
$$
which is true by the definition of the formal adjoint. 

    \item $\gamma$ and $\beta$ have dense image in $\mathcal{Y}$ and $\mathcal{U}$ respectively, hence, $\begin{bmatrix}
        K \\ G
    \end{bmatrix} = \begin{bmatrix}
        \begin{bmatrix}
            0 & \gamma
        \end{bmatrix} \\
        \begin{bmatrix}
            \beta & 0
        \end{bmatrix}
    \end{bmatrix}$ has a dense range in $\mathcal{F}_u\times \mathcal{E}_u$.
\end{enumerate}

This shows that \eqref{eqn:stokes-dirac-sys-k} is defined on a Stokes-Dirac structure.

\subsection{Stokes-Lagrange representation}

Let us now study the Stokes-Lagrange representation \eqref{eqn:stokes-lagrange-sys-k}. To do so, let us define the Lagrange structure operators $P$, $S$ and $\tilde \gamma$ and $\tilde \beta$ the observation and control operators as:
$$
P = \begin{bmatrix}
    \Id & 0 \\
    0 & \Id
\end{bmatrix}, \quad S = \begin{bmatrix}
    \bm{K}^\dag \bm{\eta} \bm{K} & 0\\
    0 & \bm{\kappa}
\end{bmatrix}, \, \text{ and } \, \tilde \gamma = \begin{bmatrix}
    - \gamma \\ 0
\end{bmatrix}, \quad \tilde \beta = \begin{bmatrix}
     \beta(\bm{\eta} \bm{K} \cdot ) \\ 0
\end{bmatrix},
$$
along with the appropriate spaces $\mathcal{Z} = \mathcal{X}=\mathcal{E}=L^2(\Omega,\mathbb{R}^n)\times L^2(\Omega,\mathbb{R}^n)$,  $\mathcal{Z}_1 = \mathcal{D}(\bm{K}^\dag \bm{\eta} \bm{K}) \times  L^2(\Omega,\mathbb{R}^n)$ and $\mathcal{Z}_0 = \ker(\tilde\gamma)\cap\ker(\tilde\beta)$. To prove that these operators define a Stokes-Lagrange subspace, let us assume that
\begin{itemize}
    \item $\tilde \beta$ has a dense image in $\tilde{\mathcal{U}} := \mathcal{U}\times \{\bm 0\}$ (the density of the image of $\tilde \gamma$ in $\tilde{\mathcal{U}} := \mathcal{Y}\times \{\bm 0\}$ is immediate by assumption);
    \item $\mathcal{Z}_0$ is dense in $\mathcal{Z}$.
\end{itemize}
\begin{remark}
    Note that cases exist where $\beta$ has a dense image but $\tilde \beta$ does not; in particular, if $\bm \eta$ is null on a part of the boundary. In such cases, one can restrict the control space from $\mathcal{U}$ to the subspace $\tilde{\mathcal{U}} = \overline{\Ima(\beta \bm \eta \bm K)} \times \{\bm 0\} $.
\end{remark}
Now, let us prove that these operators satisfy Assumptions~\ref{eqn:symmetry-assumption},~\ref{eqn:maximality-assumption},~\ref{eqn:surjectivity-assumption}, hence that they define a Stokes-Lagrange structure. 

\begin{lemma}(Unbounded operator) \label{lemma:boundedness-transposition} $P$ and $S$ satisfy the boundedness assumption~\ref{eqn:bounded-assumption}.    
\end{lemma}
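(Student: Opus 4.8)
The plan is to verify Assumption~\ref{eqn:bounded-assumption} by exhibiting a bounded operator among $P$ and $S$, and then, to fit the standing framework, to confirm that the remaining operator is densely defined and closed. Since here $P = \Id$ on $\mathcal{Z} = L^2(\Omega,\mathbb{R}^n)\times L^2(\Omega,\mathbb{R}^n)$, it is defined on all of $\mathcal{Z}$ and has operator norm $1$; hence $P$ is bounded on $\mathcal{Z}$ and Assumption~\ref{eqn:bounded-assumption} holds immediately. The remaining content is therefore entirely about $S$, which is block diagonal with blocks $\bm{K}^\dag \bm{\eta}\bm{K}$ and $\bm{\kappa}$.

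Density of $\mathcal{D}(S) = \mathcal{Z}_1$ is straightforward: by construction $\mathcal{Z}_0 = \ker(\tilde\gamma)\cap\ker(\tilde\beta) \subseteq \mathcal{Z}_1$, and $\mathcal{Z}_0$ was assumed dense in $\mathcal{Z}$, so $\mathcal{Z}_1$ is \emph{a fortiori} dense. Thus $S$ is densely defined, and it only remains to establish that it is closed.

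For closedness -- the main obstacle -- I would exploit the block-diagonal structure of $S$. The second block $\bm{\kappa}$ is bounded, hence closed, so the question reduces to closedness of $A := \bm{K}^\dag\bm{\eta}\bm{K}$ on its natural composition domain $\{x \in \mathcal{D}(\bm{K}) : \bm{\eta}\bm{K}x \in \mathcal{D}(\bm{K}^\dag)\}$. Here one cannot argue naively, since a sequence $x_k \to x$ with $A x_k \to y$ in $L^2$ gives no \emph{a priori} control on $\bm{K}x_k$. The plan is to recover this control using the Stokes identity together with the adjoint relations $\bm{K}^\dag = (\bm{K}_{|\ker(\gamma)})^*$ and $\bm{K} = (\bm{K}^\dag_{|\ker(\beta)})^*$: testing $\langle A x_k, \psi\rangle_{L^2}$ against $\psi \in \ker(\gamma)$ removes the boundary contribution and transfers $\bm{K}$ onto the fixed test function, after which the closedness of $\bm{K}$ and $\bm{K}^\dag$ lets one pass to the limit and identify $y = A x$. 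The cleanest instance is when $\bm{\eta}$ is additionally bounded below (coercive): then the restriction of $A$ to $\mathcal{Z}_0$ is of the form $\bm{K}_0^*\,\bm{\eta}\,\bm{K}_0$ with $\bm{K}_0 := \bm{K}_{|\ker(\gamma)}$, and a von Neumann-type argument yields self-adjointness, hence closedness, of that restriction, the boundary operators $\tilde\gamma,\tilde\beta$ then accounting for the extension to $\mathcal{Z}_1$. I expect this closedness step -- and in particular keeping the boundary terms controlled so that the graph limit remains inside $\mathcal{D}(\bm{K})$ -- to be the delicate part, whereas boundedness of $P$ and density of $\mathcal{D}(S)$ are essentially immediate.
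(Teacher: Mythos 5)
Your opening paragraph already contains the paper's entire (implicit) proof: Assumption~\ref{eqn:bounded-assumption} only requires that \emph{one} of $P$, $S$ be bounded on $\mathcal{Z}$, and since $P=\Id$ on $L^2(\Omega,\mathbb{R}^n)\times L^2(\Omega,\mathbb{R}^n)$ this is immediate, which is exactly why the paper states the lemma without further argument. The density and closedness analysis of $S=\begin{bmatrix}\bm{K}^\dag\bm{\eta}\bm{K} & 0\\ 0 & \bm{\kappa}\end{bmatrix}$ occupying the rest of your proposal is not part of the lemma's claim -- it belongs to the standing framework hypothesis that the operator among $P,S$ which is not bounded is densely defined and closed -- so while your sketch there is reasonable (and, as you note yourself, would still need care, e.g.\ coercivity of $\bm{\eta}$ to control $\bm{K}x_k$ along graph sequences), it is superfluous for establishing this particular statement.
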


\begin{lemma}(Symmetry) \label{lemma:symmetry-transposition} The quadruple $(P,\tilde \gamma, S, \tilde \beta)$ satisfies the symmetry assumption~\ref{eqn:symmetry-assumption}; for all $z_1, z_2 \in \mathcal{Z}_1$:
    $$
        \langle \tilde \beta z_2, \tilde \gamma z_1 \rangle_{\mathcal{U},\mathcal{Y}} + \langle Sz_2, Pz_1 \rangle_{\mathcal{E},\mathcal{X}} = \langle Sz_1, Pz_2 \rangle_{\mathcal{E},\mathcal{X}} + \langle \tilde \beta z_1, \tilde \gamma z_2 \rangle_{\mathcal{U},\mathcal{Y}} \, .
    $$
\end{lemma}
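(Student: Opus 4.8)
The plan is to exploit the block-diagonal structure of $P$ and $S$ together with the self-adjointness of the bounded operators $\bm\eta$ and $\bm\kappa$, and to reduce the whole claim to a single \emph{double integration by parts} governed by the Stokes' identity for $\bm K$. Writing $z_i = (z_i^1, z_i^2)\in\mathcal Z_1$, the equality $P = \Id$ gives $Pz_i = z_i$, so each pairing $\langle Sz_i, Pz_j\rangle_{\mathcal E,\mathcal X}$ splits along the two blocks. The second block contributes $\langle \bm\kappa z_i^2, z_j^2\rangle_{L^2}$; since $\bm\kappa$ is bounded and self-adjoint (it is the coefficient of a quadratic Hamiltonian density), this term is symmetric under $i\leftrightarrow j$ and carries no boundary contribution, so it cancels identically between the two sides of the asserted identity. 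Hence the entire statement reduces to the first block, for which I abbreviate $a := z_1^1$ and $b := z_2^1$.

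For the first block I would establish the key formula
$$\langle \bm K^\dag\bm\eta\bm K a,\, b\rangle_{L^2} - \langle a,\, \bm K^\dag\bm\eta\bm K b\rangle_{L^2} = \langle \beta(\bm\eta\bm K b),\gamma(a)\rangle_{\mathcal U,\mathcal Y} - \langle \beta(\bm\eta\bm K a),\gamma(b)\rangle_{\mathcal U,\mathcal Y},$$
which is exactly the ``two integrations by parts'' alluded to in the remark following Assumption~\ref{eqn:symmetry-assumption}. I would obtain it by applying the Stokes' identity for $\bm K$ twice: first with $\phi = b$ and $\psi = \bm\eta\bm K a$ to move $\bm K^\dag$ off of $\bm\eta\bm K a$ (producing one boundary pairing), then, after transporting $\bm\eta$ across the $L^2$ product using $\bm\eta = \bm\eta^*$, once more with $\phi = a$ and $\psi = \bm\eta\bm K b$ (producing the second boundary pairing). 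Both applications are legitimate because $z^1\in\mathcal D(\bm K^\dag\bm\eta\bm K)$ by the very definition of $\mathcal Z_1$, so $\bm\eta\bm K a,\bm\eta\bm K b\in\mathcal D(\bm K^\dag)$ while $a,b\in\mathcal D(\bm K)$, which is precisely what the identity requires.

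Finally I would identify the boundary terms of the displayed formula with the pairings appearing in Assumption~\ref{eqn:symmetry-assumption}. Using the definitions $\tilde\beta z = (\beta(\bm\eta\bm K z^1),0)$ and $\tilde\gamma z = (-\gamma z^1, 0)$, the combination $\langle\tilde\beta z_2,\tilde\gamma z_1\rangle_{\mathcal U,\mathcal Y} - \langle\tilde\beta z_1,\tilde\gamma z_2\rangle_{\mathcal U,\mathcal Y}$ collapses to the two boundary terms above (the second slot being zero throughout), and rearranging yields the claimed symmetry equality. The main obstacle is not conceptual but bookkeeping: one must keep track of which argument carries $\bm K$ versus $\bm K^\dag$ through the two integrations by parts, place each function in the correct domain ($\mathcal D(\bm K)$ or $\mathcal D(\bm K^\dag)$) so that the Stokes' identity applies, and match the signs of the resulting boundary pairings with those fixed in the definitions of $\tilde\beta$ and $\tilde\gamma$; the self-adjointness of $\bm\eta$ is the ingredient that makes the two bulk terms coincide, leaving only the antisymmetric boundary contribution.
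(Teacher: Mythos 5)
Your strategy is the right one, and in fact it is the only one available: the paper \emph{states} Lemma~\ref{lemma:symmetry-transposition} without proof, so the argument must be reconstructed exactly as you do — reduce to the first block using $P=\Id$ and the self-adjointness of $\bm{\kappa}$, then apply the Stokes identity for $\bm{K}$ twice with $\bm{\eta}=\bm{\eta}^*$ in between. Your key displayed formula is correct (I verified it: with $\phi=b$, $\psi=\bm{\eta}\bm{K}a$ and then $\phi=a$, $\psi=\bm{\eta}\bm{K}b$, the two bulk terms coincide through $\langle \bm{\eta}\bm{K}a,\bm{K}b\rangle = \langle \bm{K}a,\bm{\eta}\bm{K}b\rangle$), and your domain justification is exactly right: $z^1\in\mathcal{D}(\bm{K}^\dag\bm{\eta}\bm{K})$ means precisely $z^1\in\mathcal{D}(\bm{K})$ and $\bm{\eta}\bm{K}z^1\in\mathcal{D}(\bm{K}^\dag)$, which is what both applications of the identity require.

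However, the ``bookkeeping'' you defer to the last step does not close with the definitions as printed, and this is a genuine sign gap. With $\tilde\beta z = (\beta(\bm{\eta}\bm{K}z^1),0)$ and $\tilde\gamma z = (-\gamma z^1,0)$, the combination you invoke collapses to
$$
\langle\tilde\beta z_2,\tilde\gamma z_1\rangle - \langle\tilde\beta z_1,\tilde\gamma z_2\rangle
= \langle\beta(\bm{\eta}\bm{K}z_1^1),\gamma(z_2^1)\rangle - \langle\beta(\bm{\eta}\bm{K}z_2^1),\gamma(z_1^1)\rangle \, ,
$$
which is the \emph{negative} of the right-hand side of your key formula. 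Consequently what your computation actually establishes is
$$
\langle Sz_1,Pz_2\rangle - \langle\tilde\beta z_1,\tilde\gamma z_2\rangle
= \langle Sz_2,Pz_1\rangle - \langle\tilde\beta z_2,\tilde\gamma z_1\rangle \, ,
$$
i.e.\ Assumption~\ref{eqn:symmetry-assumption} with the boundary pairings entering with the opposite sign. A 1D sanity check ($\bm{K}=\partial_x$, $\bm{K}^\dag=-\partial_x$, $\bm{\eta}=1$ on $[a,b]$) confirms the lemma is false as literally printed: it reduces to $[ab']_a^b = [a'b]_a^b$, which fails for generic $a,b$. The root cause is the lone minus sign in the paper's definition of $\tilde\gamma$: the symmetry identity holds precisely when $\tilde\beta$ and $\tilde\gamma$ carry coherent signs (take $\tilde\gamma = (\gamma,0)$, or negate both maps), and with that correction your rearrangement goes through verbatim. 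Note also that a single integration by parts already suffices and makes the sign constraint transparent:
$$
\langle Sz_1,Pz_2\rangle + \langle\beta(\bm{\eta}\bm{K}z_1^1),\gamma(z_2^1)\rangle
= \langle\bm{\eta}\bm{K}z_1^1,\bm{K}z_2^1\rangle + \langle\bm{\kappa}z_1^2,z_2^2\rangle \, ,
$$
whose right-hand side is manifestly symmetric in $(z_1,z_2)$ by the self-adjointness of $\bm{\eta}$ and $\bm{\kappa}$; this halves the work of your double-IBP route and shows immediately which relative sign between $\tilde\beta$ and $\tilde\gamma$ the lemma demands.
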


\begin{lemma}(Maximality) \label{lemma:maximality-transposition} The couple $(P,S)$ is maximally reciprocal with respect to $\mathcal{Z}_0$.
\end{lemma}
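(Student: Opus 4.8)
The plan is to exploit the block-diagonal structure $P=\Id$, $S=\mathrm{diag}(\bm{K}^\dag\bm\eta\bm{K},\bm\kappa)$ together with the two adjoint identities $(\bm{K}_{|\ker\gamma})^*=\bm{K}^\dag$ and $(\bm{K}^\dag_{|\ker\beta})^*=\bm{K}$ that define the formal adjoint. Writing the condition of Assumption~\ref{eqn:maximality-assumption} for $z=(z_1,z_2)\in\mathcal{Z}_0$, $x=(x_1,x_2)$, $e=(e_1,e_2)$ and using $P=\Id$, it reads $\langle \bm{K}^\dag\bm\eta\bm{K} z_1,x_1\rangle+\langle\bm\kappa z_2,x_2\rangle=\langle e_1,z_1\rangle+\langle e_2,z_2\rangle$ for all such $z$. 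Since the second component of $\mathcal{Z}_0$ is all of $L^2(\Omega,\mathbb{R}^n)$ and $\bm\kappa$ is bounded self-adjoint, testing with $z_1=0$ yields $e_2=\bm\kappa x_2$, so that $\tilde z_2:=x_2$ satisfies $P\tilde z_2=x_2$, $S\tilde z_2=e_2$; the second block is thus settled immediately, and the whole difficulty concentrates on the first block.

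Setting $A:=\bm{K}^\dag\bm\eta\bm{K}$ on $\mathcal{D}(A)=\{z_1\in\mathcal{D}(\bm{K}):\bm\eta\bm{K} z_1\in\mathcal{D}(\bm{K}^\dag)\}$ and $A_0:=A_{|\mathcal{Z}_0^{(1)}}$ with $\mathcal{Z}_0^{(1)}:=\mathcal{D}(A)\cap\ker\gamma\cap\{z_1:\beta(\bm\eta\bm{K} z_1)=0\}$, the first-block statement is exactly that $\langle A z_1,x_1\rangle=\langle e_1,z_1\rangle$ for all $z_1\in\mathcal{Z}_0^{(1)}$ forces $x_1\in\mathcal{D}(A)$ and $Ax_1=e_1$; that is, $A_0^*\subseteq A$. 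The key observation is that $A_0$ is the \emph{minimal} realization of the formally self-adjoint expression $\bm{K}^\dag\bm\eta\bm{K}$ (both traces $\gamma z_1$ and $\beta(\bm\eta\bm{K} z_1)$ switched off), while $A$ is its \emph{maximal} realization (no boundary constraint), so the claim $A_0^*=A$ is the standard minimal/maximal adjoint duality, of which $(-\Delta)_{\min}^*=(-\Delta)_{\max}$ is the model case.

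I would prove $A_0^*=A$ by the two inclusions. The inclusion $A\subseteq A_0^*$ is the symmetry already recorded in Lemma~\ref{lemma:symmetry-transposition}: on $\mathcal{Z}_0^{(1)}$ both boundary pairings vanish, so $A_0$ is symmetric and the Stokes identities give $\langle A z_1,x_1\rangle=\langle z_1,Ax_1\rangle$ for every $z_1\in\mathcal{Z}_0^{(1)}$ and $x_1\in\mathcal{D}(A)$. For the reverse inclusion $A_0^*\subseteq A$ I would bootstrap regularity in two steps from the identity $\langle \bm{K}^\dag(\bm\eta\bm{K} z_1),x_1\rangle=\langle e_1,z_1\rangle$. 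Writing $\psi:=\bm\eta\bm{K} z_1\in\ker\beta$, one first uses $(\bm{K}^\dag_{|\ker\beta})^*=\bm{K}$ and the density of $\ker\beta$ to show that $x_1\in\mathcal{D}(\bm{K})$ and to move one $\bm{K}^\dag$ across; one then uses $(\bm{K}_{|\ker\gamma})^*=\bm{K}^\dag$ together with $\gamma z_1=0$ and the density of $\ker\gamma$ to conclude that $\bm\eta\bm{K} x_1\in\mathcal{D}(\bm{K}^\dag)$ with $\bm{K}^\dag\bm\eta\bm{K} x_1=e_1$. Hence $x_1\in\mathcal{D}(A)$ and $Ax_1=e_1$, so $\tilde z_1:=x_1\in\mathcal{Z}_1$ realizes $x_1=P\tilde z_1$, $e_1=S\tilde z_1$, which combined with the second block yields the required $\tilde z\in\mathcal{Z}_1$.

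The main obstacle is precisely this regularity bootstrapping, namely proving that membership in the a priori weak adjoint domain $\mathcal{D}(A_0^*)$ forces $x_1$ into the strong domain $\mathcal{D}(\bm{K}^\dag\bm\eta\bm{K})$. The delicate point is that the available test elements $\bm\eta\bm{K} z_1$, with $z_1$ ranging over $\mathcal{Z}_0^{(1)}$, need not exhaust $\mathcal{D}(\bm{K}^\dag)\cap\ker\beta$ in graph norm, so the adjoint characterizations cannot be invoked verbatim; closing this gap is exactly where the closedness of $\bm{K}$ and the density of $\ker\gamma$ and $\ker\beta$ assumed at the start of~\ref{apx:operator-transposition} become indispensable, and it is the heart of the argument.
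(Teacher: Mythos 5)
Your plan reconstructs the paper's proof essentially step for step. The paper also dispatches the second block first: since $\mathcal{Z}_0$ constrains only the first component (so $\{0\}\times L^2(\Omega,\mathbb{R}^n)\subset\mathcal{Z}_0$), symmetry of $\bm{\kappa}$ and density give $e^2=\bm{\kappa}\tilde z^2$. For the first block the paper sets $\tilde z:=x$ (so $x=P\tilde z$ is trivial because $P=\Id$), notes $\tilde z\in\mathcal{D}(S_0^*)$, and then performs exactly your two adjoint moves: $\langle \bm{K}^\dag(\bm{\eta}\bm{K}z_0^1),\tilde z^1\rangle=\langle \bm{\eta}\bm{K}z_0^1,\bm{K}\tilde z^1\rangle$ using $(\bm{K}^\dag_{|\ker(\beta)})^*=\bm{K}$ and $\bm{\eta}\bm{K}z_0^1\in\ker(\beta)$, then $\langle \bm{K}z_0^1,\bm{\eta}\bm{K}\tilde z^1\rangle=\langle z_0^1,\bm{K}^\dag(\bm{\eta}\bm{K}\tilde z^1)\rangle$ using $(\bm{K}_{|\ker(\gamma)})^*=\bm{K}^\dag$ and $z_0^1\in\ker(\gamma)$, concluding $e^1=\bm{K}^\dag\bm{\eta}\bm{K}\tilde z^1$ by density of $\mathcal{Z}_0$. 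So in strategy there is no divergence at all: your $A_0^*\subseteq A$ formulation is precisely the paper's argument recast as a minimal/maximal duality statement, and your observation that $A\subseteq A_0^*$ is just Lemma~\ref{lemma:symmetry-transposition} is likewise how the easy inclusion works there.

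The one genuine issue is the step you yourself flag and then leave open: the regularity bootstrap. As you note, to invoke $(\bm{K}^\dag_{|\ker(\beta)})^*=\bm{K}$ one must first know $x_1\in\mathcal{D}(\bm{K})$, and the adjoint characterization cannot be applied verbatim, since the hypothesis only tests against $\psi=\bm{\eta}\bm{K}z_1$ with $z_1\in\mathcal{Z}_0^{(1)}$ — a family that need not be graph-dense in $\mathcal{D}(\bm{K}^\dag)\cap\ker(\beta)$ — and the right-hand side $\langle e_1,z_1\rangle$ is controlled by $z_1$, not of the form $\langle\psi,w\rangle$ (indeed $z_1\mapsto\bm{\eta}\bm{K}z_1$ may have a kernel). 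Your proposal announces this bootstrap ("one first uses\dots to show that $x_1\in\mathcal{D}(\bm{K})$") but does not supply it, so as a proof it is incomplete at its declared heart. It is worth saying plainly, however, that the paper's proof does not supply it either: its displayed chain of equalities is legitimate only if $\tilde z^1\in\mathcal{D}(\bm{K})$ (first move) and $\bm{\eta}\bm{K}\tilde z^1\in\mathcal{D}(\bm{K}^\dag)$ (second move), i.e., it presupposes the very membership $\tilde z^1\in\mathcal{D}(\bm{K}^\dag\bm{\eta}\bm{K})$ that the final appeal to density of $\mathcal{Z}_0$ is claimed to deliver; that density argument only identifies $e^1$ once the regularity is granted. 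What is actually needed is that the minimal realization $A_0$ of $\bm{K}^\dag\bm{\eta}\bm{K}$ have adjoint equal to the maximal one, which holds in the paper's concrete instances ($\bm{K}=\grad$, $\partial_x$, $\curl$, with $\bm{\eta}$ coercive) by the classical minimal/maximal duality you cite for $-\Delta$, but is not derived from the abstract assumptions of the appendix — neither by you nor by the paper. So your reconstruction matches the paper's route, and your "main obstacle" is an accurate diagnosis of a step the paper passes over silently rather than a defect peculiar to your attempt.
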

\begin{proof}
Let  $x,e \in \mathcal{X}\times\mathcal{E}$ such that:
$$ 
\forall z_0 \in \mathcal{Z}_0, \quad \langle S z_0, x\rangle_{\mathcal{E},\mathcal{X}} - \langle e, P z_0\rangle_{\mathcal{E},\mathcal{X}}  = 0 \, , 
$$
and prove that there exists $Z \in \mathcal{Z}_1$ such that $x = Pz$ and $e = Sz$. Let us define $\tilde{z} = x$, which trivially yields: $ x = P \tilde z = \tilde z \in \mathcal{Z}$. 

Let us now show that 
$\tilde z\in \mathcal{Z}_1$ and $e = S \tilde z$. Firstly, we have that:
$$ 
\forall z \in \mathcal{Z}_0, \quad \langle S z,  \tilde z \rangle_{\mathcal{E},\mathcal{X}} - \langle e, z\rangle_{\mathcal{E},\mathcal{X}} = \langle S z, P \tilde z \rangle_{\mathcal{E},\mathcal{X}} - \langle e, P z\rangle_{\mathcal{E},\mathcal{X}}  = 0 \, , 
$$
in particular, defining $S_0 := S_{|\mathcal{Z}_0}$, we have:
$$
\forall z_0 \in \mathcal{Z}_0, \quad \langle e, z_0\rangle_{\mathcal{E},\mathcal{X}} = \langle S_0 z_0, \tilde z \rangle_{\mathcal{E},\mathcal{X}} \, ,
$$
hence, $\tilde z \in \mathcal{D}(S_0^*)$. Now using the block structure of $S$ leads to:
$$
\forall \begin{bmatrix}
    z_0^1 \\ z_0^2
\end{bmatrix} \in \mathcal{Z}_0, \qquad \begin{cases}
    \langle e^1, z_0^1 \rangle_{\mathcal{E},\mathcal{X}} = \langle \bm{K}^\dag(\bm{\eta} \bm{K}) z_0^1 , \tilde{z}^1\rangle_{\mathcal{E},\mathcal{X}} \, , \\
    \langle e^2, z_0^2 \rangle_{\mathcal{E},\mathcal{X}} = \langle \bm{\kappa} z_0^2,  \tilde{z}^2\rangle_{\mathcal{E},\mathcal{X}} =  \langle  z_0^2,  \bm{\kappa} \tilde{z}^2\rangle_{\mathcal{E},\mathcal{X}} \, .
\end{cases}
$$
Since $\mathcal{X}$ is a Hilbert space, we can identify $\mathcal{X}$ with its dual and the second line gives us that $e^2 = \bm{\kappa} \tilde{z}^2$.

Let us focus on the second line. We have: 
$$
\langle e^1, z_0^1 \rangle_{\mathcal{E},\mathcal{X}} = \langle \bm{K}^\dag(\bm{\eta} \bm{K}) z_0^1 , \tilde{z}^1\rangle_{\mathcal{E},\mathcal{X}} \, ,
$$
with $z_0^1 \in \ker(\gamma) \cap \ker(\beta \bm \eta \bm K)$. Hence, given that $\bm \eta \bm K z_0^1 \in \ker(\beta)$ and that $(\bm K^\dag_{|\ker(\beta)})^* = \bm K$, this yields:
$$ 
\langle \bm{K}^\dag(\bm{\eta} \bm{K}) z_0^1 , \tilde{z}^1\rangle_{\mathcal{E},\mathcal{X}} =  \langle \bm{\eta} \bm{K} z_0^1 , \bm{K} \tilde{z}^1\rangle_{\mathcal{E},\mathcal{X}} = \langle  \bm{K} z_0^1 , \bm{\eta} \bm{K} \tilde{z}^1\rangle_{\mathcal{E},\mathcal{X}} \, . 
$$
Now using the fact that $z_0^1 \in \ker(\gamma)$, we get:
$$
\langle  \bm{K} z_0^1 , \bm{\eta} \bm{K} \tilde{z}^1\rangle_{\mathcal{E},\mathcal{X}} = \langle   z_0^1 , \bm{K}^\dag(\bm{\eta} \bm{K} )\tilde{z}^1\rangle_{\mathcal{E},\mathcal{X}} \, .
$$
Finally, this yields, since $\mathcal{X}=\mathcal{E}$:
$$
     \langle e^1, z_0^1 \rangle_{\mathcal{X}} = \langle   z_0^1 , \bm{K}^\dag(\bm{\eta} \bm{K} )\tilde{z}^1\rangle_{\mathcal{X}} \, ,
$$
which, by density of $\mathcal{Z}_0$, gives us that $\tilde{z}^1 \in \mathcal{D}(\bm{K}^\dag(\bm{\eta} \bm{K}))$ and $e^1 = \bm{K}^\dag(\bm{\eta} \bm{K} )\tilde{z}^1$.

Gathering the results yields $\tilde z \in \mathcal{Z}_1$ and $x = P \tilde z$, $e = S \tilde z$; which proves the maximality.
\end{proof}

\begin{lemma}(surjectivity) \label{lemma:surjectivity-transposition} The operator $ \begin{bmatrix}
    \tilde \gamma  \\ \tilde \beta 
\end{bmatrix}$ has a dense image in $\tilde{\mathcal{Y}} \times \tilde{\mathcal{U}}$.
\end{lemma}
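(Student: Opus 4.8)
The plan is to reduce the statement to a density claim about a single combined boundary map, and then to establish that claim by an annihilator argument in which the two boundary traces are decoupled with the help of the kernel-density hypotheses stated at the start of the appendix.

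First I would observe that, since the second components of $\tilde{\mathcal{Y}} = \mathcal{Y}\times\{\bm 0\}$ and $\tilde{\mathcal{U}} = \mathcal{U}\times\{\bm 0\}$ are trivial and the free component $z^2\in L^2(\Omega,\mathbb{R}^n)$ of $z=(z^1,z^2)\in\mathcal{Z}_1$ enters neither $\tilde\gamma$ nor $\tilde\beta$, proving the lemma is equivalent to showing that the map
$$ T : \mathcal{D}(\bm{K}^\dag\bm{\eta}\bm{K}) \longrightarrow \mathcal{Y}\times\mathcal{U}, \qquad T z^1 := \begin{bmatrix} -\gamma z^1 \\ \beta(\bm{\eta}\bm{K}z^1) \end{bmatrix}, $$
has dense range. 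I would then use the standard characterization of density: it suffices to prove that any pair $(a,b)\in\mathcal{Y}'\times\mathcal{U}'$ annihilating $\Ima(T)$ must vanish, i.e.\ if $-\langle a,\gamma z^1\rangle_{\mathcal Y',\mathcal Y} + \langle b,\beta(\bm\eta\bm K z^1)\rangle_{\mathcal U',\mathcal U} = 0$ for all $z^1\in\mathcal{D}(\bm K^\dag\bm\eta\bm K)$, then $a=0$ and $b=0$.

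The core of the argument is to decouple the two traces. Restricting the identity to $z^1\in\ker(\gamma)\cap\mathcal{D}(\bm K^\dag\bm\eta\bm K)$ kills the first pairing and leaves $\langle b,\beta(\bm\eta\bm K z^1)\rangle = 0$ on this subspace; the density of $\ker(\gamma)$ in $L^2(\Omega,\mathbb{R}^n)$ together with the assumed dense image of $\tilde\beta$ is then used to show that $\{\beta(\bm\eta\bm K z^1) : z^1\in\ker(\gamma)\}$ remains dense in $\mathcal U$, whence $b=0$. Symmetrically, choosing $z^1$ with $\bm\eta\bm K z^1\in\ker(\beta)$ — possible because $\ker(\beta)$ is dense in $L^2(\Omega,\mathbb{R}^m)$ and $\bm K$ has dense range — removes the second pairing and leaves $\langle a,\gamma z^1\rangle=0$, so the dense image of $\gamma$ in $\mathcal Y$ forces $a=0$. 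This is the transposed analogue of the third item of Assumption~\ref{eqn:skew-symmetry-assumption}, where for the Stokes-Dirac structure the two traces act on independent components and the decoupling is automatic.

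I expect the main obstacle to be exactly this decoupling step. Unlike the Stokes-Dirac case, here both $\tilde\gamma$ and $\tilde\beta$ act on the \emph{same} component $z^1$, so marginal density of each trace does not by itself yield joint density. The delicate point is to verify that restricting $z^1$ to $\ker(\gamma)$ (respectively imposing $\bm\eta\bm K z^1\in\ker(\beta)$) does not destroy the density of the other trace; this is where the two kernel-density hypotheses of the appendix are indispensable, and it morally reflects the independence of the Dirichlet-type data carried by $\gamma$ and the Neumann-type data carried by $\beta(\bm\eta\bm K\,\cdot\,)$.
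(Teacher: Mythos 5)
You should first note what the paper actually does here: Lemma~\ref{lemma:surjectivity-transposition} is stated \emph{without proof}. The bullet list at the start of~\ref{apx:operator-transposition} only records the \emph{marginal} densities (the image of $\tilde\beta$ is assumed dense in $\tilde{\mathcal{U}}$, and the density of the image of $\tilde\gamma$ is immediate since $\gamma$ has dense image), and the joint density of the stacked operator is then asserted; in effect the paper treats it as part of the standing hypotheses. Your proposal attempts to genuinely \emph{derive} the joint density from the marginal ones, and this is precisely where it breaks down. Your reduction to the single map $T z^1 = (-\gamma z^1, \beta(\bm{\eta}\bm{K}z^1))$ and the annihilator characterization are both fine; the gap lies in the two density-transfer claims that carry the decoupling.

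Concretely: (1) you claim that $L^2$-density of $\ker(\gamma)$, together with the dense image of $\tilde\beta$, implies that $\{\beta(\bm{\eta}\bm{K}z^1) : z^1 \in \ker(\gamma)\cap\mathcal{D}(\bm{K}^\dag\bm{\eta}\bm{K})\}$ is dense in $\mathcal{U}$. But $\beta(\bm{\eta}\bm{K}\,\cdot\,)$ is a boundary trace, discontinuous with respect to the $L^2$ topology, so density of a subspace in $L^2$ says nothing about the density of its image under this map. The paper's own hypotheses furnish the counter-illustration: $\mathcal{Z}_0 = \ker(\tilde\gamma)\cap\ker(\tilde\beta)$ is assumed dense in $\mathcal{Z}$, i.e.\ there is an $L^2$-dense subspace on which \emph{both} traces vanish identically; a map with dense image restricted to an $L^2$-dense subspace can thus have zero image (compare the trace $H^1(\Omega)\to H^{1/2}(\partial\Omega)$ restricted to $H^1_0(\Omega)$). (2) Symmetrically, your selection of $z^1$ with $\bm{\eta}\bm{K}z^1\in\ker(\beta)$ realizing a dense set of values $\gamma z^1$ invokes a dense range of $\bm{K}$, which is nowhere assumed in the paper, and even granting it the inference fails for the same reason. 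More structurally, the marginal densities \emph{cannot} imply the joint density: nothing in the abstract hypotheses prevents $\beta(\bm{\eta}\bm{K}\,\cdot\,)$ from factoring through $\gamma$, in which case the range of the pair lies in a diagonal-type subspace and is never dense in the product although each marginal is dense. The two decoupled statements you need --- $\beta\bm{\eta}\bm{K}(\ker\gamma)$ dense in $\mathcal{U}$ and $\gamma(\ker(\beta\bm{\eta}\bm{K}))$ dense in $\mathcal{Y}$, whose sum would indeed yield the lemma --- are therefore not consequences of the stated hypotheses but essentially a restatement of the claim; in concrete instances (e.g.\ $\bm{K}=\grad$ with Dirichlet trace $\gamma$ and Neumann-type trace $\beta(\bm{\eta}\bm{K}\,\cdot\,)$) they are established by trace theory on the boundary spaces, not by $L^2$-interior density. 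If you want a self-contained proof, you must either add the joint density (or the two decoupled densities) as an assumption, as the paper implicitly does, or prove it from the specific form of $\bm{K}$, $\gamma$, $\beta$.
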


Now applying Theorem~\ref{thm:lagrange-iso-coisotropic} with lemmas~\ref{lemma:boundedness-transposition},~\ref{lemma:symmetry-transposition},~\ref{lemma:maximality-transposition} and~\ref{lemma:surjectivity-transposition}, shows that $P,S,\tilde \gamma, \tilde \beta$ defines a Stokes-Lagrange structure.

\subsection{Proof of Theorem~\ref{thm:dirac-lagrange-equiv}} \label{apx:operator-transposition-representation}
\begin{proof}
Let us compute the three different lines directly as follows:
    \begin{itemize}
        \item $\alpha^{SD} = G \alpha^{SL}$: 
        $$
            \begin{aligned}
            G \alpha^{SL}  = \begin{bmatrix}
                \bm{K} & 0\\
                0 & \Id
            \end{bmatrix} \begin{bmatrix}
                \alpha^{SL}_1 \\ \alpha^{SL}_2
            \end{bmatrix}
           = \begin{bmatrix}
            \bm{K}\alpha^{SL}_1  \\ \alpha^{SL}_2
        \end{bmatrix} = \begin{bmatrix}
            \alpha^{SD}_1 \\ \alpha^{SD}_2
        \end{bmatrix}. \end{aligned}
        $$
        \item $G^\dag e^{SD} = e^{SL}$: 
        $$
            G^\dag e^{SD} = \begin{bmatrix}
            \bm{K}^\dag & 0 \\
            0 & \Id
        \end{bmatrix} \begin{bmatrix}
            e^{SD}_1  \\ e^{SD}_2
        \end{bmatrix} = \begin{bmatrix}
            \bm{K}^\dag \bm{\eta} \, \alpha_1^{SD} \\
            \bm{\kappa} \, \alpha_2^{SD}
        \end{bmatrix} = \begin{bmatrix}
            \bm{K}^\dag  \bm{\eta} \bm{K} \,  \alpha_1^{SL} \\
            \bm{\kappa} \, \alpha_2^{SL}
        \end{bmatrix} = \begin{bmatrix}
            e_1^{SL} \\ e_2^{SL}
        \end{bmatrix}.
        $$
        \item $GJ^{SL}G^\dag = J^{SD}$:
        $$
            GJ^{SL}G^\dag = \begin{bmatrix}
            \bm{K} & 0\\
            0 & \Id
        \end{bmatrix} \begin{bmatrix}
            0 & \Id \\ - \Id & 0
        \end{bmatrix} \begin{bmatrix}
            \bm{K}^\dag & 0\\
            0 & \Id
        \end{bmatrix} = \begin{bmatrix}
           0 & \bm{K} \\
           - \bm{K}^\dag & 0
        \end{bmatrix} = J^{SD}.
        $$
    \end{itemize}
    
\end{proof}

\section{Maxwell reciprocity conditions} \label{apx:maxwell-reciprocity}

	The goal of this appendix is to prove that the Maxwell reciprocity conditions are sufficient in order to define a Hamiltonian given a set of implicit constitutive relations.
	Let $\mathcal{X}$ be a Hilbert space, and consider $\mathcal{E} = \mathcal{X}'$. Moreover, let us consider $\mathcal{Z}$ the latent state space and assume that there exists (possibly nonlinear) operators $p: \mathcal{Z} \rightarrow \mathcal{X}$ and $s: \mathcal{Z} \rightarrow \mathcal{E}$ such that:
\begin{equation} \label{eqn:cons-rel-impl}
\mathcal{L} = \begin{bmatrix}
		p \\ s
	\end{bmatrix} \mathcal{Z} \, ,
\end{equation}
    \textit{i.e.} $\alpha = p(z)$ and $e = s(z)$. Moreover, we require that the first variational derivative of $s$ and $p$ exist and the second variational derivative of $p$ exists. These will be denoted by $\delta_z s [z](\cdot), \, \delta_z p [z](\cdot), \, \delta_z^2 p [z](\cdot,\cdot)$.
	Now, let us search for  a Hamiltonian corresponding to \eqref{eqn:cons-rel-impl}. Following ~\cite{hairer2006geometric}, given time-dependent latent state, state and effort $z(t,x), \, \alpha(t,x), \, e(t,x)$, such a Hamiltonian should satisfy the power balance:
	   \begin{equation}\label{eqn:ham-time-deriv-condition}
	       \frac{\rm d}{{\rm d}t} H(z) = \langle e, \partial_t \alpha \rangle_{\mathcal{E},\mathcal{X}} \, .
	   \end{equation} 
    This relation means that the power balance exists and is exactly equal to the scalar product between the efforts $e$ and flows $\partial_t \alpha$. Expanding the right-hand side yields:
	\begin{equation}\label{eqn:ham-time-deriv-condition-expanded}
 \frac{\rm d}{{\rm d}t} H(z) = \langle s(z) \, , \, \delta_z p[z](\partial_t z) \rangle \, .
 \end{equation}
    Let us now define a Hamiltonian candidate given $p$ and $s$ by using \eqref{eqn:ham-time-deriv-condition}. To do so, given a state $z(x)$, define $\overline{z}(t,x) = t z(x)$, a segment between the state $\bm 0 = \overline{z}(0,\cdot)$ and $z = \overline{z}(1,\cdot)$, see Figure~\ref{fig:def-segment}. Then integrating the power balance \eqref{eqn:ham-time-deriv-condition-expanded} along the interval $[0,1]$ gives us:
	\begin{equation} \label{eqn:ham-candidate-def}
		H(z) - H(\bm 0) = \int_0^1 \frac{\rm d}{{\rm d}\tau} H(\overline{z}(\tau,x))\, \d \tau = \int_0^1 \langle s(\tau  z), \delta_z p[\tau  z] z \rangle \d \tau \, .
	\end{equation}
    Note that in this case $H$ is defined up to a constant $H(\bm 0)$ which can be taken arbitrarily (\textit{e.g.} $H(\bm 0) = 0$ or $H(\bm 0) =c$ such that $H(z) \geq 0$ for all $z \in \mathcal{Z}$).
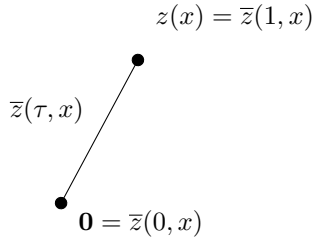
\begin{figure}[ht]
	\centering
	\begin{tikzpicture}
	\node (Z) at (4.25,0.75) {$z(x) = \overline{z}(1,x)$};
	
	\node (O) at (3,-2) {$\bm 0 = \overline{z}(0,x)$};
	
	\draw[*-*] (O)+(-1.1,0.2) -- (3,.25);
	\node (ZZ) at (1.75,-0.5) {$\overline{z}(\tau,x)$};	
    
	\end{tikzpicture}
	\caption{Definition of the Hamiltonian candidate.}
    \label{fig:def-segment}
\end{figure}
Let us now show that the Maxwell reciprocity conditions ensures that this Hamiltonian is well-defined and satisfies \eqref{eqn:ham-time-deriv-condition}. These reciprocity conditions can be stated as the following symmetry assumption:

 \begin{definition} \label{def:maxwell-reciprocity-condition}(Maxwell reciprocity condition) Let $\mathcal{Z,X,E}$ be three Hilbert spaces: $\mathcal{Z}$ the latent state space, $\mathcal{X}$ the state space and $\mathcal{E}$ the effort space. Let  $p:\mathcal{Z}\rightarrow \mathcal{X}$ and $s:\mathcal{Z}\rightarrow\mathcal{E}$ be two operators, such that the first variational derivative of $p$ and $s$ exists. Then, $p$ and $s$ satisfy the Maxwell reciprocity conditions if:
     $$
     \forall z\in \mathcal{Z}, \qquad  \delta_zs^*(z) \, \delta_zp(z) = \delta_zp^*(z) \, \delta_zs(z) \, .
     $$
 \end{definition}
 Then, given $p$ and $s$ satisfying the Maxwell reciprocity conditions and time-dependent latent state $z(t,x)$, let us expand $\frac{\rm d}{{\rm d}t}H(z)$:
	$$
		\begin{array}{rl}
			\dsp \frac{\rm d}{{\rm d}t}H(z) =& \dsp \int_0^1 \langle \delta_z s [\tau z] \tau \partial_t z, \delta_z p[\tau  z] \, z \rangle \\
			& \dsp \qquad + \langle s(\tau  z),  \delta_z p[\tau z](\partial_t z) + \delta_{z^2}^2p[\tau z](\tau \partial_t z,z) \rangle \, \d \tau \, , \\
			=& \dsp \int_0^1 \langle \delta_z p [\tau z] \tau \partial_t z, \delta_z s[\tau  z] \, z \rangle \qquad \qquad \qquad \quad \text{(symmetry)} \\
			& \dsp \qquad \quad + \langle s(\tau  z),  \delta_z p[\tau z](\partial_t z) + \delta_{z^2}^2p[\tau z](\tau \partial_t z,z) \rangle \, \d \tau \, , \\
			=& \dsp \int_0^1 \langle s(\tau z) + \delta_z s [\tau z] \, \tau z, \delta_zp[\tau z] \partial_t z\rangle \\
			& \dsp \qquad \quad + \left\langle \tau s(\tau z), \frac{\rm d}{{\rm d} \tau} \left[ \delta_{z}p[\tau z] \partial_t z \right] \right\rangle \, \d \tau \, , \\
			=& \dsp \int_0^1 \left\langle \frac{\rm d}{{\rm d} \tau}\left[ \tau s(\tau z) \right], \delta_zp[\tau z] \partial_t z \right\rangle + \left\langle \tau s(\tau z), \frac{\rm d}{{\rm d} \tau} \left[ \delta_{z}p[\tau z] \partial_t z \right] \right\rangle \, \d \tau \, , \\
			=& \dsp \int_0^1 \frac{\rm d}{{\rm d}\tau} \langle \tau s(\tau z), \delta_z s[\tau z] \partial_t z \rangle \d \tau \, , \\
			=& \dsp \langle s(z), \delta_zp[z] \partial_t z \rangle \, ,
		\end{array}
	$$
which proves that $H$ satisfies the power balance \eqref{eqn:ham-time-deriv-condition}. Hence, the  definition of a Hamiltonian given a set of nonlinear operators $p$, $s$ \eqref{eqn:ham-candidate-def} is possible if they satisfy the Maxwell's reciprocity condition:
\begin{theorem}
    Let $\mathcal{Z,X,E}$ be three Hilbert spaces: $\mathcal{Z}$ the latent state space, $\mathcal{X}$ the state space and $\mathcal{E}$ the effort space. Let  $p:\mathcal{Z}\rightarrow \mathcal{X}$ and $s:\mathcal{Z}\rightarrow\mathcal{E}$ be two operators, such that the first variational derivative of $p$ and $s$ exists and the second variational derivative of $p$ exists.

    Then, if $p$ and $s$ satisfy the Maxwell reciprocity condition, then there exists $H: \mathcal{Z} \rightarrow \mathbb{R}$ such that:
    $$
        \frac{\rm d}{{\rm d}t} H(z) = \langle e, \partial_t \alpha \rangle_{\mathcal{E},\mathcal{X}} \, ,
    $$
    with $e = s(z)$ and $\alpha = p(z)$.
\end{theorem}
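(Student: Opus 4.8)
The plan is to construct the Hamiltonian explicitly by integrating the candidate power one-form along a straight segment from the origin and then verify that this candidate satisfies the required power balance. Concretely, I would take the path already suggested in the construction: setting $\overline z(\tau,x) = \tau\, z(x)$ for $\tau \in [0,1]$, I define $H(z) := H(\bm 0) + \int_0^1 \langle s(\tau z), \delta_z p[\tau z]\, z\rangle\, \d\tau$, with $H(\bm 0)$ fixed arbitrarily. This is the infinite-dimensional analogue of recovering a potential from its ``gradient'' by integrating along a ray from the origin, and the Maxwell reciprocity condition will play the role of the closedness (curl-free) hypothesis in the Poincaré lemma.

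First I would fix a time-dependent latent state $z(t,x)$ and differentiate $H(z(t))$ with respect to $t$. Carrying the time derivative under the $\tau$-integral and applying the chain and product rules to the integrand $\langle s(\tau z), \delta_z p[\tau z]\, z\rangle$ produces two contributions: one in which the time derivative hits $s$, yielding a term containing $\delta_z s[\tau z](\tau \partial_t z)$, and one in which it hits $\delta_z p[\tau z]$, yielding a term built from the second variational derivative $\delta_z^2 p[\tau z](\tau\partial_t z, z)$ together with $\delta_z p[\tau z](\partial_t z)$. This is precisely why the hypotheses demand the first variational derivatives of both $p$ and $s$ and the second variational derivative of $p$.

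The decisive step is the symmetrization. Using the reciprocity identity $\delta_z s^*[z]\,\delta_z p[z] = \delta_z p^*[z]\,\delta_z s[z]$, I would rewrite the first contribution $\langle \delta_z s[\tau z](\tau\partial_t z), \delta_z p[\tau z]\, z\rangle$ as $\langle \delta_z p[\tau z](\tau\partial_t z), \delta_z s[\tau z]\, z\rangle$, exchanging the roles of the two derivatives. After this swap, the entire integrand regroups as the total derivative $\frac{\rm d}{{\rm d}\tau}\langle \tau\, s(\tau z), \delta_z p[\tau z]\,\partial_t z\rangle$; the regrouping uses $\frac{\rm d}{{\rm d}\tau}[\tau s(\tau z)] = s(\tau z) + \delta_z s[\tau z](\tau z)$ on one factor and the chain rule on the other. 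The fundamental theorem of calculus then collapses $\int_0^1 \frac{\rm d}{{\rm d}\tau}(\cdots)\,\d\tau$ to its value at $\tau = 1$, namely $\langle s(z), \delta_z p[z]\,\partial_t z\rangle$, which equals $\langle e, \partial_t\alpha\rangle_{\mathcal{E},\mathcal{X}}$ since $e = s(z)$ and $\partial_t\alpha = \delta_z p[z](\partial_t z)$.

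I expect the main obstacle to be the careful bookkeeping of the variational derivatives: ensuring the reciprocity identity is applied to the correct term in the correct pairing, and that every $\tau$-factor lands exactly where the telescoping into a total $\tau$-derivative requires. A secondary subtlety, which in this smooth setting I would only need to flag rather than fully develop, is that reciprocity is exactly the condition guaranteeing path-independence of the defining integral, so that $H$ is genuinely well-defined; the straight-segment choice conveniently sidesteps any nontrivial topology of $\mathcal{Z}$.
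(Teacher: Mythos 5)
Your proposal is correct and follows essentially the same route as the paper's proof in its appendix on Maxwell reciprocity: the same ray-integral definition $H(z) = H(\bm 0) + \int_0^1 \langle s(\tau z), \delta_z p[\tau z]\, z\rangle\, \d\tau$, the same use of the reciprocity condition to exchange $\delta_z s$ and $\delta_z p$ in the first contribution, and the same regrouping of the integrand into the total derivative $\frac{\rm d}{{\rm d}\tau}\langle \tau\, s(\tau z), \delta_z p[\tau z]\,\partial_t z\rangle$ collapsed by the fundamental theorem of calculus at $\tau = 1$. Your closing remark framing reciprocity as the closedness hypothesis of a Poincar\'e-lemma argument (hence path-independence of the defining integral) is a sound observation that the paper likewise leaves implicit.
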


\section{Stokes-Lagrange structure} \label{apx:stokes-lagrange}

In this appendix, we will show additional results and characterizations of the Stokes-Lagrange structure. Firstly, Assumption~\ref{eqn:maximality-assumption} is not the only good definition of maximality; the following theorem allows for a different one.
\begin{lemma} \label{lemma:maximality-equivalence}
    Defining $S_0 := S_{|\mathcal{Z}_0}$, $P_0:= S_{|\mathcal{Z}_0},$ Assumption~\ref{eqn:maximality-assumption} is equivalent to:
    $$ \label{eqn:lagrange-maximally-reciprocal-include}
        \ker \begin{bmatrix}
        S_0^* & 
        -P_0^*
    \end{bmatrix} \subset \Ima \begin{bmatrix}
        P \\ S
    \end{bmatrix}.
    $$
\end{lemma}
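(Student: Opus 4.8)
The plan is to show that the hypothesis and the conclusion appearing in Assumption~\ref{eqn:maximality-assumption} are, respectively, exact reformulations of membership in $\ker\begin{bmatrix} S_0^* & -P_0^*\end{bmatrix}$ and in $\Ima\begin{bmatrix} P \\ S\end{bmatrix}$; once this is done, the implication ``hypothesis $\Rightarrow$ conclusion'' is literally the displayed inclusion. The conclusion is the easy half: the statement ``there exists $\tilde z \in \mathcal{Z}_1$ with $x = P\tilde z$ and $e = S\tilde z$'' is by definition $(x,e)^\top \in \Ima\begin{bmatrix} P \\ S\end{bmatrix}$, so all the content lies in rewriting the hypothesis.

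First I would translate the pairing hypothesis into an adjoint identity. For $z \in \mathcal{Z}_0$ one has $S_0 z = Sz$ and $P_0 z = Pz$, so the hypothesis reads $\langle S_0 z, x\rangle_{\mathcal{E},\mathcal{X}} = \langle e, P_0 z\rangle_{\mathcal{E},\mathcal{X}}$ for all $z \in \mathcal{Z}_0$. Since $\mathcal{Z}_0$ is dense in $\mathcal{Z}$ (which is already required for $S_0^*$ and $P_0^*$ to be single-valued, so that the statement even makes sense), the adjoints $S_0^*\colon \mathcal{X} \to \mathcal{Z}$ and $P_0^*\colon \mathcal{E}\to\mathcal{Z}$ are well defined, and their defining relations give $\langle S_0 z, x\rangle = \langle z, S_0^* x\rangle_{\mathcal{Z}}$ and $\langle e, P_0 z\rangle = \langle z, P_0^* e\rangle_{\mathcal{Z}}$ whenever $x \in \mathcal{D}(S_0^*)$ and $e \in \mathcal{D}(P_0^*)$.

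The delicate point, which I expect to be the main obstacle, is that membership in $\ker\begin{bmatrix} S_0^* & -P_0^*\end{bmatrix}$ carries the domain constraints $x\in\mathcal{D}(S_0^*)$ and $e\in\mathcal{D}(P_0^*)$, whereas the hypothesis a priori only asserts that a difference of pairings vanishes. This is exactly where Assumption~\ref{eqn:bounded-assumption} enters. Suppose, say, that $S$ is bounded (the case where $P$ is bounded being symmetric). Then $S_0^*$ is everywhere defined and bounded, so $x\in\mathcal{D}(S_0^*)$ automatically and $z\mapsto\langle S_0 z, x\rangle = \langle z, S_0^* x\rangle_{\mathcal{Z}}$ is $\mathcal{Z}$-continuous. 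The hypothesis then forces $z \mapsto \langle e, P_0 z\rangle$ to coincide with this continuous functional on $\mathcal{Z}_0$; continuity of $z\mapsto\langle e, P_0 z\rangle$ on the dense domain $\mathcal{Z}_0$ is precisely the criterion for $e\in\mathcal{D}(P_0^*)$, and one then reads off $\langle z, P_0^* e\rangle_{\mathcal{Z}} = \langle z, S_0^* x\rangle_{\mathcal{Z}}$ for all $z\in\mathcal{Z}_0$. By density of $\mathcal{Z}_0$ this yields $S_0^* x = P_0^* e$, that is, $(x,e)^\top\in\ker\begin{bmatrix} S_0^* & -P_0^*\end{bmatrix}$.

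The converse direction needs no boundedness: if $(x,e)^\top\in\ker\begin{bmatrix} S_0^* & -P_0^*\end{bmatrix}$, then $x\in\mathcal{D}(S_0^*)$, $e\in\mathcal{D}(P_0^*)$ and $S_0^* x = P_0^* e$, so for every $z\in\mathcal{Z}_0$ the adjoint relations give $\langle Sz, x\rangle - \langle e, Pz\rangle = \langle z, S_0^* x - P_0^* e\rangle_{\mathcal{Z}} = 0$, which is exactly the hypothesis of Assumption~\ref{eqn:maximality-assumption}. Having established that this hypothesis is equivalent to $(x,e)^\top\in\ker\begin{bmatrix} S_0^* & -P_0^*\end{bmatrix}$, while the conclusion is equivalent to $(x,e)^\top\in\Ima\begin{bmatrix} P \\ S\end{bmatrix}$, the equivalence of Assumption~\ref{eqn:maximality-assumption} with the stated inclusion follows at once.
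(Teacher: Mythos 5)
Your proof is correct and takes essentially the same route as the paper's: both arguments translate, for a fixed pair $(x,e)$, the hypothesis of Assumption~\ref{eqn:maximality-assumption} into $(x,e)^\top \in \ker\begin{bmatrix} S_0^* & -P_0^* \end{bmatrix}$ and its conclusion into $(x,e)^\top \in \Ima\begin{bmatrix} P \\ S \end{bmatrix}$, after which the equivalence with the stated inclusion is immediate. If anything, you are more careful than the paper at the one delicate step: the paper passes from the vanishing of $\langle S z_0, x\rangle_{\mathcal{E},\mathcal{X}} - \langle e, P z_0\rangle_{\mathcal{E},\mathcal{X}}$ on $\mathcal{Z}_0$ to kernel membership without comment, whereas you correctly observe that this requires $x \in \mathcal{D}(S_0^*)$ and $e \in \mathcal{D}(P_0^*)$, and you supply the missing justification via Assumption~\ref{eqn:bounded-assumption} (boundedness of $P$ or $S$), the continuity criterion characterizing the domain of an adjoint, and the density of $\mathcal{Z}_0$ in $\mathcal{Z}$.
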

\begin{proof} 
\begin{itemize}
    \item[$\implies$]Let us assume that Assumption~\ref{eqn:maximality-assumption} is satisfied and show that $\ker \begin{bmatrix}
        S_0^* & 
        -P_0^*
    \end{bmatrix} \subset \Ima \begin{bmatrix}
        P \\ S
    \end{bmatrix}.
    $

Let $(x,e)^\top \in \ker \begin{bmatrix}
        S_0^* & 
        -P_0^*
    \end{bmatrix}$. Then,
    for all $ (z, z_0) \in \mathcal{Z}_1 \times \mathcal{Z}_0$, $\langle S z, x \rangle_{\mathcal{E},\mathcal{X}} - \langle e ,P z \rangle_{\mathcal{E},\mathcal{X}} = 0$. By applying Assumption~\ref{eqn:maximality-assumption}, there exists $\tilde z$ such that $S\tilde z = e$ and $P \tilde z = x$, hence $(x,e)^\top \in \Ima \begin{bmatrix}
        P \\ S
    \end{bmatrix}$. This being true for all $(x,e)^\top \in \ker \begin{bmatrix}
        S_0^* & -P_0^*
    \end{bmatrix}$, we deduce that:
$$
        \ker \begin{bmatrix}
        S_0^* & 
        -P_0^*
    \end{bmatrix} \subset \Ima \begin{bmatrix}
        P \\ S
    \end{bmatrix}.
    $$

    \item[$\impliedby$] Let us assume that $\ker \begin{bmatrix}
        S_0^* & 
        -P_0^*
    \end{bmatrix} \subset \Ima \begin{bmatrix}
        P \\ S
    \end{bmatrix}$. 
    
    Let $(x,e)^\top\in \mathcal{X}\times\mathcal{E}$ such that for all $ z_0 \in  \mathcal{Z}_0$, $\langle S z_0, x \rangle_{\mathcal{E},\mathcal{X}} - \langle e ,P z_0 \rangle_{\mathcal{E},\mathcal{X}} = 0$ and show that there exists $z \in \mathcal{Z}_1$ such that: $x = Pz$ and $e=Sz$. 
    
     Since $\langle S z_0, x \rangle_{\mathcal{E},\mathcal{X}} - \langle e ,P z_0 \rangle_{\mathcal{E},\mathcal{X}} = 0$ is true for all $z\in \mathcal{Z}_0$ , we have that $(x,e) \in \ker \begin{bmatrix}
        S_0^* & 
        -P_0^*
    \end{bmatrix}$. Moreover, by assumption, we have  that:  $ \ker \begin{bmatrix}
        S_0^* & 
        -P_0^*
    \end{bmatrix} \subset \Ima \begin{bmatrix}
        P \\ S
    \end{bmatrix}$. This implies that there exists $z \in \mathcal{Z}_1$ such that: $x = Pz$ and $e=Sz$. 
    
    To conclude, we have that:

    If $(x,e)^\top \in \mathcal{X}\times\mathcal{E}$ is such that for all $ z_0 \in  \mathcal{Z}_0$, $\langle S z_0, x \rangle_{\mathcal{E},\mathcal{X}} - \langle e ,P z_0 \rangle_{\mathcal{E},\mathcal{X}} = 0$ then, there exists $z \in \mathcal{Z}_1$ such that: $x = Pz$ and $e=Sz$. Which is exactly Assumption~\ref{eqn:maximality-assumption}.
\end{itemize}
\end{proof}

\begin{remark}
    Definition~\ref{def:Stokes-Lagrange-candidate} uses the image representation of a Lagrange structure. However, Theorem~\ref{thm:lagrange-iso-coisotropic} shows that one can consider a so-called kernel representation:
    $$
        \mathcal{L} %= \mathcal{L}^{\perpext} 
        = \left  \lbrace \begin{bmatrix}
            x \\ \chi \\ e \\ \varepsilon
        \end{bmatrix} \in  \mathcal{X} \times \mathcal{Y} \times \mathcal{E} \times \mathcal{U} \, \mid \, \forall z \in \mathcal{Z}_1, \, 
        \left\langle \left\langle \begin{bmatrix}
                x \\ \chi \\ e \\ \varepsilon
            \end{bmatrix}, \begin{bmatrix}
                P \\ \gamma \\ S \\ \beta
            \end{bmatrix}z \right\rangle \right\rangle_- = 0 \, \right\rbrace.
    $$
    This is the usual representation in the explicit finite-dimensional case, with $ S^\top x = P^\top e $. In particular, if $P = I_n$, $S = Q = Q^\top \geq0$, this becomes $e = Qx$. Finally, defining $H(x) = \frac{1}{2} x^\top Q x$ we get the usual $e = \nabla H(x) = Q x$.
\end{remark}

\subsection{Proof of Theorem~\ref{thm:lagrange-iso-coisotropic}} \label{apx:stokes-lagrange-proof}
Let us demonstrate Theorem~\ref{thm:lagrange-iso-coisotropic}, \textit{i.e.} that a Stokes-Lagrange structure is a Lagrange structure:
\begin{proof}
    $\subset$: Applying Assumption~\ref{eqn:symmetry-assumption} shows that $\mathcal{L} \subset \mathcal{L}^{\perpext}$ directly.

    $\supset$: Let $ \begin{bmatrix}
        x & \chi & e & \varepsilon
    \end{bmatrix}^\top \in \mathcal{L}^\perpext$, then for all $z \in \mathcal{Z}_1,$ we have:
        \begin{equation} \label{eqn:orthogonal-relation}
            0 = \left\langle \left\langle \begin{bmatrix}
                x \\ \chi \\ e \\ \varepsilon
            \end{bmatrix}, \begin{bmatrix}
                P \\ \gamma \\ S \\ \beta
            \end{bmatrix}z \right\rangle \right\rangle_- .
        \end{equation}
    In particular, by choosing $z\in  \mathcal{Z}_0$, we have: $ 0 = \langle e, Pz \rangle_{\mathcal{E},\mathcal{X}} - \langle Sz, \rangle_{\mathcal{E},\mathcal{X}} $ which by the maximality assumption gives us that there exists $\tilde z\in \mathcal{Z}_1$ such that $e = S \tilde z$ and $ x = P \tilde z$. Putting it in \eqref{eqn:orthogonal-relation} and applying Assumption~\ref{eqn:symmetry-assumption} yields: 
    $$ 
    \forall \, z \in \mathcal{Z}_1, \qquad 
        0 = \langle \beta(z), \chi - \gamma(\tilde z)  \rangle_{\mathcal{U},\mathcal{Y}} - \langle \varepsilon - \beta( \tilde z) , \gamma(z) \rangle_{\mathcal{U},\mathcal{Y}} \, .
    $$
    Considering that $\begin{bmatrix}
        0 & -1 \\  1 & 0
    \end{bmatrix}$ is invertible and that $\begin{bmatrix}
        \gamma \\ \beta
    \end{bmatrix}$ has a dense range by Assumption~\ref{eqn:surjectivity-assumption}, yields that $\gamma(\tilde z) = \chi$ and $\beta(\tilde z) = \varepsilon$. This shows that $\begin{bmatrix} x & \chi & e & \varepsilon \end{bmatrix}^\top$ belongs to the image representation of the Lagrange structure, hence: 
    $$
    \begin{bmatrix}
        x & \chi & e & \varepsilon
    \end{bmatrix}^\top \in \mathcal{L} \, .
    $$
\end{proof}

\section{Variational derivatives and power balances} \label{apx:var-deriv}

\subsection{Variational derivatives of $H_W^{SL}$} \label{apx-subsec:wave-varder-lagrange}
\begin{lemma} The variational derivatives of $H_W^{SL}$ read:
    $$
        \delta_w H_W^{SL}= - \partial_x(E\partial_x w) \, , \qquad \delta_p H_W^{SL} = \frac{1}{\rho}p \, .
    $$
\end{lemma}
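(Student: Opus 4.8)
The plan is to apply the definition of the variational derivative as the first variation (Gâteaux derivative) of the functional, performing an integration by parts whenever a spatial derivative falls on the variation. Recall that for $H_W^{SL}(w,p)$ given in~\eqref{eqn:ham-wave-implicit}, the variational derivative $\delta_w H_W^{SL}$ is characterized by the identity
$$
\frac{\rm d}{{\rm d}\epsilon}\bigg|_{\epsilon = 0} H_W^{SL}(w + \epsilon\, \delta w, p) = \int_a^b (\delta_w H_W^{SL})\, \delta w \,\d x,
$$
holding for all admissible variations $\delta w$ vanishing at the boundary, and analogously for $\delta_p H_W^{SL}$ with perturbations $\delta p$.

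First I would dispatch $\delta_p H_W^{SL}$, which is immediate since $p$ enters the Hamiltonian only through the algebraic term $\frac{1}{2\rho}p^2$. Differentiating under the integral sign gives
$$
\frac{\rm d}{{\rm d}\epsilon}\bigg|_{\epsilon=0} H_W^{SL}(w, p+\epsilon\,\delta p) = \int_a^b \frac{1}{\rho}\, p\, \delta p \,\d x,
$$
so that $\delta_p H_W^{SL} = \frac{1}{\rho}p$ can be read off directly with no boundary contribution.

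The substantive computation is $\delta_w H_W^{SL}$, where $w$ appears through $\partial_x w$. Perturbing $w \mapsto w + \epsilon\,\delta w$ and differentiating yields $\int_a^b E\,(\partial_x w)\,(\partial_x \delta w)\,\d x$, and the key step is to integrate by parts to move the derivative off $\delta w$:
$$
\int_a^b E\,(\partial_x w)\,(\partial_x \delta w)\,\d x = \big[E\,(\partial_x w)\,\delta w\big]_a^b - \int_a^b \partial_x(E\,\partial_x w)\,\delta w\,\d x.
$$
The boundary term vanishes because the variational derivative is tested against variations supported away from the boundary (equivalently $\delta w(a) = \delta w(b) = 0$), leaving $\delta_w H_W^{SL} = -\partial_x(E\,\partial_x w)$. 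The only point requiring care is precisely this boundary term: it is what the ordinary variational derivative discards, and it is exactly the quantity that the Stokes-Lagrange framework of Section~\ref{sec:Stokes-Lagrange} is designed to retain. I therefore expect no genuine obstacle beyond being explicit about the class of admissible variations on which the derivative is defined.
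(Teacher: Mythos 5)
Your proposal is correct and follows essentially the same route as the paper's proof in~\ref{apx-subsec:wave-varder-lagrange}: a first-order expansion of $H_W^{SL}$ under a perturbation, with a single integration by parts whose boundary term vanishes because the variation is taken in $H^1_0([a,b],\mathbb{R})$ (your condition $\delta w(a)=\delta w(b)=0$), and a direct computation for the $p$-derivative. Your closing remark that the discarded boundary term is precisely what the Stokes-Lagrange structure retains is a correct observation consistent with the paper's treatment of the boundary energy port.
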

\begin{proof}
Let us firstly compute $\delta_w H_W^{SL}$, let $h \in \mathbb{R}$, $w \in H^1([a,b],\mathbb{R})$ and $f \in H^1_0([a,b],\mathbb{R})$, then:
    $$
        \begin{aligned}
            H_W^{SL}(w + hf,p) - H_W^{SL}(w,p) &= \frac{1}{2} \int_\Omega \partial_x(w + hf)\,  E \partial_x(w + hf) - \partial_x w\, E \partial_x w \, \d x \, ,\\
            & = \int_\Omega h \partial_x f\, E \partial_x w\, \d x + o(h) \, , \\
            & = - \int_\Omega h f \partial_x(E\partial_x w) \, \d x + o(h) \, , \\
            & = h \langle - f, \partial_x(E\partial_x w) \rangle_{L^2} + o(h) \, .
        \end{aligned}
    $$
Now let us compute $\delta_p H_W^{SL}$, let $h \in \mathbb{R}$ and $f \in L^2([a,b],\mathbb{R})$, then:
$$
    \begin{aligned}
        H_W^{SL}(w ,p+ hf) - H_W^{SL}(w,p) &= \frac{1}{2}\int_\Omega \frac{1}{\rho} ((p + hf)^2 - p) \, \d x \, , \\
        & = \int_\Omega hf \, \frac{1}{\rho}p \, \d x + o(h) \, , \\
        & = h\langle f, \frac{1}{\rho}p\rangle_{L^2} + o(h) \, .
    \end{aligned}
$$
Hence, we get:
$$
     \delta_w H_W^{SL}= - \partial_x(E\partial_x w) \, , \qquad \delta_p H_W^{SL} = \frac{1}{\rho}p \, .
$$
\end{proof}

\subsection{Variational derivatives of $H_{RM}^{SL}$} \label{apx-subsec:reissner-mindlin-varder-lagrange}
\begin{lemma}
    The Variational derivatives of $H_{RM}^{SL}$ read:
   $$
       \begin{bmatrix}
        \delta_w H_{RM}^{SL} \\
        \delta_{p_w} H_{RM}^{SL} \\
        \delta_{\bm{\phi}} H_{RM}^{SL} \\
        \delta_{\bm{p_\phi}} H_{RM}^{SL}
    \end{bmatrix} 
    =  \begin{bmatrix}
        - \diver ( kGh \, \grad( \cdot ) ) & 0 & - \diver ( kGh \, \cdot)  &  0\\
        0 & \frac{1}{\rho h} & 0 & 0\\
        kGh \, \grad(\cdot) & 0 &  -\Diver( \mathbb{D} \, \Grad(\cdot)) + kGh  & 0\\
        0 & 0 & 0 & \frac{12}{\rho h^3}
    \end{bmatrix} \begin{bmatrix}
         w \\
         p_w \\
         \bm{\phi} \\
         \bm{p_\phi}
    \end{bmatrix}.
   $$
\end{lemma}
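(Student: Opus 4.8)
The plan is to compute each of the four variational derivatives directly from the definition, using the fact that the integrand of $H_{RM}^{SL}$ splits into pieces that depend separately (or in coupled pairs) on the state components. Recall that $\delta_\alpha H_{RM}^{SL}$ is characterised by $\frac{\d}{\d h}H_{RM}^{SL}(\alpha + h f)\big|_{h=0} = \langle \delta_\alpha H_{RM}^{SL}, f\rangle_{L^2}$ for every admissible variation $f$ chosen in a space of test functions vanishing on $\partial\Omega$, so that the boundary contributions produced by the integrations by parts may be discarded. I would treat the four variables one at a time, reusing the same mechanism established for $H_W^{SL}$ in Section~\ref{apx-subsec:wave-varder-lagrange}.

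First I would dispatch the two momentum variables, which enter $H_{RM}^{SL}$ only algebraically: $p_w$ through $\frac{1}{2\rho h}p_w^2$ and $\bm{p_\phi}$ through $\frac{6}{\rho h^3}|\bm{p_\phi}|^2$. Expanding $p_w \mapsto p_w + hf$ and $\bm{p_\phi}\mapsto \bm{p_\phi} + h\bm{g}$ and retaining the first-order term immediately yields $\delta_{p_w}H_{RM}^{SL} = \frac{1}{\rho h}p_w$ and $\delta_{\bm{p_\phi}}H_{RM}^{SL} = \frac{12}{\rho h^3}\bm{p_\phi}$, with no integration by parts required. Next I would treat $w$, which appears only in the shear term $\frac{1}{2}kGh\,|\grad(w)-\bm{\phi}|^2$; linearising $w\mapsto w+hf$ gives the first-order contribution $\int_\Omega kGh\,\grad(f)\cdot(\grad(w)-\bm{\phi})\,\d x$, and the scalar gradient/divergence integration-by-parts formula (see~\ref{apx:stokes-identities}), after discarding $\int_{\partial\Omega} f\,kGh(\grad(w)-\bm{\phi})\cdot\bm{n}\,\d s$, produces $-\diver(kGh\,(\grad(w)-\bm{\phi}))$. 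Finally, for $\bm{\phi}$ both the shear term and the bending term $\frac{1}{2}\Grad(\bm{\phi}):\mathbb{D}\,\Grad(\bm{\phi})$ contribute: the shear term gives the algebraic part $kGh\,(\bm{\phi}-\grad(w))$, while linearising the bending term (using the symmetry of $\mathbb{D}$ so the cross term collapses to a single factor $\mathbb{D}\,\Grad(\bm{\phi})$) yields $\int_\Omega \Grad(\bm{g}):\mathbb{D}\,\Grad(\bm{\phi})\,\d x$, which the tensor $\Grad$/$\Diver$ Stokes identity converts into $-\Diver(\mathbb{D}\,\Grad(\bm{\phi}))$ up to a boundary term $\int_{\partial\Omega}\bm{g}\cdot\mathbb{D}\,\Grad(\bm{\phi})\bm{n}\,\d s$ that vanishes on the test space. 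Collecting the four results into a single operator acting on $(w,p_w,\bm{\phi},\bm{p_\phi})^\top$ gives the asserted matrix, in agreement with Lemma~\ref{lem:var-der-RM-SL}.

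The main obstacle I anticipate is bookkeeping rather than conceptual: one must apply the two distinct integration-by-parts identities correctly — the scalar one for the $\grad$/$\diver$ pair and the tensor one for the $\Grad$/$\Diver$ pair — and verify that the associated boundary integrals genuinely vanish for variations supported away from $\partial\Omega$, so that they do not contaminate the densities. The coupling between $w$ and $\bm{\phi}$ through the shared shear term $\frac{1}{2}kGh\,|\grad(w)-\bm{\phi}|^2$ is precisely what produces the off-diagonal entries of the matrix, and the sign of each contribution must be tracked carefully through the linearisation to assemble the rows consistently.
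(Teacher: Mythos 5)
Your method is exactly the paper's: compute each Gateaux derivative directly, with the momentum variables handled algebraically, the shear term producing the $w$-row and the algebraic part of the $\bm{\phi}$-row, the bending term integrated by parts via the tensor $\Grad$/$\Diver$ identity of~\ref{apx:stokes-identities}, and all boundary terms discarded because the variations vanish on $\partial\Omega$ (the paper takes them in $H^2_0$). Your individual computations are also correct.

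The step that fails is the final assembly: the four derivatives you obtain do \emph{not} reproduce the asserted matrix, and your closing claim of agreement glosses over a real sign discrepancy. From your (correct) linearisation $\tfrac{1}{2}|u-h\bm{F}|^2=\tfrac{1}{2}|u|^2-h\,u\cdot\bm{F}+O(h^2)$ with $u=\grad(w)-\bm{\phi}$, the shear contribution to $\delta_{\bm{\phi}}H_{RM}^{SL}$ is $kGh\,(\bm{\phi}-\grad(w))$, whereas the printed $(3,1)$ entry is $+kGh\,\grad(\cdot)$ (and the main-text Lemma~\ref{lem:var-der-RM-SL} states $+kGh\,(\grad(w)-\bm{\phi})$); similarly $\delta_w H_{RM}^{SL}=-\diver(kGh(\grad(w)-\bm{\phi}))$ carries the $\bm{\phi}$-contribution $+\diver(kGh\,\bm{\phi})$, whereas the printed $(1,3)$ entry is $-\diver(kGh\,\cdot)$. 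Your signs are the right ones: they are the only ones consistent with the dynamics~\eqref{eqn:RM} under $\partial_t p_w=-\delta_w H_{RM}^{SL}$ and $\partial_t\bm{p_\phi}=-\delta_{\bm{\phi}}H_{RM}^{SL}$. Indeed the paper's own proof makes the matching slip, writing the first-order shear term as $+h\langle\bm{F},kGh(\grad(w)-\bm{\phi})\rangle$ where the expansion gives $-h\langle\bm{F},kGh(\grad(w)-\bm{\phi})\rangle$. So rather than asserting that your results "give the asserted matrix", you should have flagged that your derivation corrects the off-diagonal signs of the stated operator (both versions are formally symmetric, so symmetry alone cannot arbitrate; the dynamics can, and they side with your computation).
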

\begin{proof}
Let us compute the 4 different variational derivatives using the Stokes' identities presented in~\ref{apx:stokes-identities}. Let us denote by $f \in H^2_0(\Omega)$ and $\bm{F} \in H^2_0(\Omega, \mathbb{R}^{2})$ a scalar and a vector valued test functions; moreover let us denote by $h\in \mathbb{R}$ a scalar.
    \begin{itemize}
        \item $\delta_w H_{RM}^{SL}$: 
        
        Denoting by $\Delta_w^h :=H_{RM}^{SL}(w + hf, p_w, \bm{\phi}, \bm{p_\phi}) - H_{RM}^{SL}(w, p_w, \bm{\phi}, \bm{p_\phi})$, we get: 
        $$ 
        \begin{aligned}
            \Delta_w^h  =& \frac{1}{2} \left \langle kGh \, \grad(w + hf) - \bm{\phi}, \grad(w + hf) - \bm{\phi} \right \rangle_{L^2}\\
                    & \qquad  -  \frac{1}{2} \left \langle kGh \, \grad(w) - \bm{\phi}, \grad(w) - \bm{\phi} \right \rangle_{L^2} \, , \\
                    =& \left \langle f, -  \diver\left( kGh \, (\grad(w) - \bm{\phi})\right) \right\rangle_{L^2} + o(h) \, .
        \end{aligned}
        $$
        \item $\delta_{p_w} H_{RM}^{SL}$: 
        
        Denoting by $\Delta_{p_w}^h := H_{RM}^{SL}(w , p_w + hf, \bm{\phi}, \bm{p_\phi}) - H_{RM}^{SL}(w, p_w, \bm{\phi}, \bm{p_\phi})$, we get:
        $$ 
        \begin{aligned}
            \Delta_{p_w}^h & = \frac{1}{2}\langle \frac{1}{\rho h} (p_w + hf), (p_w + hf) \rangle_{L^2} - \frac{1}{2} \langle \frac{1}{\rho h} p_w, p_w \rangle_{L^2} \, ,\\
            & = h \langle f, \frac{1}{\rho h} p_w \rangle_{L^2} + o(h) \, .
        \end{aligned}
        $$
        \item $\delta_{\bm{\phi}} H_{RM}^{SL}$: 
        
        Denoting by $\Delta_{\bm{\phi}}^h := H_{RM}^{SL}(w + hf, p_w, \bm{\phi} + h \bm{F}, \bm{p_\phi}) - H_{RM}^{SL}(w, p_w, \bm{\phi}, \bm{p_\phi})$, we get: 
        $$ 
        \begin{aligned}
            \Delta_{\bm{\phi}}^h  = & \frac{1}{2} \langle kGh \left(\grad(w) - (\bm{\phi} + h \bm{F})\right),  \grad(w) - (\bm{\phi} + h \bm{F}) \rangle_{L^2}  \\
                                    & \qquad + \frac{1}{2} \langle \mathbb{D} \, \Grad(\bm{\phi} + h \bm{F}), \Grad(\bm{\phi} + h\bm{F}) \rangle_{L^2}  \\
                                    & \qquad -\frac{1}{2} \langle kGh \, \grad(w) - (\bm{\phi}),  \grad(w) - (\bm{\phi}) \rangle_{L^2}  \\
                                    & \qquad - \frac{1}{2} \langle \mathbb{D} \, \Grad(\bm{\phi} ), \Grad(\bm{\phi} ) \rangle_{L^2} \, , \\
                                =& h \langle \bm{F}, kGh \, (\grad(w) - \bm{\phi} ) \rangle_{L^2} \\
                                    & \qquad +  h \langle \bm{F}, - \Diver(\mathbb{D}\,\Grad(\bm{\phi}))\rangle_{L^2} + o(h) \, .
        \end{aligned}
        $$
        \item $\delta_{\bm{p_\phi}} H_{RM}^{SL}$: 
        
        Denoting by $\Delta_{\bm{p_\phi}}^h :=  H_{RM}^{SL}(w + hf, p_w, \bm{\phi}, \bm{p_\phi}  + h \bm{F}) - H_{RM}^{SL}(w, p_w, \bm{\phi}, \bm{p_\phi}),$ we get:
        $$ 
        \begin{aligned}
           \Delta_{\bm{p_\phi}}^h = &\frac{1}{2} \langle \frac{12}{\rho h^3} (\bm{p_\phi} + h \bm{F}), (\bm{p_\phi} + h \bm{F})\rangle_{L^2}  + \frac{1}{2} \langle  \frac{12}{\rho h^3} \bm{p_\phi}, \bm{p_\phi}\rangle_{L^2} \, , \\
                                  = & h \langle \bm{F}, \frac{12}{\rho h^3} \bm{p_\phi}\rangle_{L^2} + o(h) \, .
        \end{aligned}
        $$
    \end{itemize}
\end{proof}

\section{Stokes' identities} \label{apx:stokes-identities}

Let us consider $\Omega \subset \mathbb{R}^3$ a domain and $\bm{n}$ denote the outer facing normal of the boundary $\partial \Omega$ of $\Omega$.

\subsection{$\Diver$ and $\Grad$}

\begin{theorem}
    Let $\bm{\psi} \in H^1(\Omega, \mathbb{R}^3), \; \tensor{\phi} \in H^1(\Omega, \mathbb{R}^3\otimes \mathbb{R}^3),$ with $\tensor{\psi} = \tensor{\psi}^\top,$ then:
    $$
        \int_\Omega \Grad(\bm{\psi}) : \tensor{\phi} \, \d x= - \int_\Omega \bm{\psi} \cdot \Diver(\tensor{\phi}) \, \d x + \int_{\partial \Omega} \bm{\psi}^\top \, \tensor{\phi} \, \bm{n} \, \d s \, .
    $$
\end{theorem}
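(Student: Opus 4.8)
The plan is to reduce the stated identity to the classical divergence theorem applied componentwise, using the symmetry $\tensor{\phi}=\tensor{\phi}^\top$ to replace the symmetric gradient $\Grad$ by the ordinary gradient $\nabla$. First I would write everything in components with the summation convention on repeated indices. Since $\Grad(\bm{\psi})_{ij} = \tfrac12(\partial_i\psi_j + \partial_j\psi_i)$, the integrand reads $\Grad(\bm{\psi}):\tensor{\phi} = \tfrac12(\partial_i\psi_j + \partial_j\psi_i)\,\phi_{ij}$. Using $\phi_{ij}=\phi_{ji}$ and relabelling the dummy indices $i\leftrightarrow j$ in the second half, the two halves coincide, so that $\Grad(\bm{\psi}):\tensor{\phi} = \partial_i\psi_j\,\phi_{ij} = \nabla\bm{\psi}:\tensor{\phi}$. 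This is the only place where the symmetry hypothesis enters, and it is precisely what allows the symmetric gradient to be handled as a full gradient.

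Next I would apply the Leibniz rule $\partial_i(\psi_j\phi_{ij}) = \partial_i\psi_j\,\phi_{ij} + \psi_j\,\partial_i\phi_{ij}$ and integrate over $\Omega$. Introducing the vector field with $i$-th component $W_i := \sum_j \psi_j\phi_{ij}$, the first term integrates to $\int_\Omega \sum_i\partial_i W_i\,\d x = \int_{\partial\Omega}\sum_i W_i\,n_i\,\d s = \int_{\partial\Omega}\sum_{i,j}\psi_j\phi_{ij}n_i\,\d s$ by the divergence theorem; one more appeal to symmetry matches the index ordering and identifies this with $\int_{\partial\Omega}\bm{\psi}^\top\tensor{\phi}\,\bm{n}\,\d s$. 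The remaining interior term $\int_\Omega \psi_j\,\partial_i\phi_{ij}\,\d x$ becomes $\int_\Omega \bm{\psi}\cdot\Diver(\tensor{\phi})\,\d x$ once one unwinds the definition $\Diver(\tensor{\phi})_i = \sum_j \partial_j\phi_{ij}$ (again modulo a symmetric relabelling). Rearranging the resulting equality yields exactly the claimed formula.

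There is no genuine analytic obstacle here: the result is a routine integration by parts, and the only real care needed is the index bookkeeping together with the repeated use of $\tensor{\phi}=\tensor{\phi}^\top$. For the $H^1$ regularity stated, I would justify the boundary integral either directly via the trace theorem (each product $\psi_j\phi_{ij}$ lies in $W^{1,1}(\Omega)$ since both factors are in $H^1$, and the boundary term is read as a duality pairing of traces), or, more transparently, by first establishing the identity for smooth $\bm{\psi},\tensor{\phi}\in C^\infty(\overline{\Omega})$ and then passing to the limit using the density of smooth functions in $H^1(\Omega)$ together with the continuity of both sides of the identity in the $H^1$ topology.
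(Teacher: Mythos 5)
Your proposal is correct and follows essentially the same route as the paper's own proof: use the symmetry of $\tensor{\phi}$ to replace $\Grad(\bm{\psi}):\tensor{\phi}$ by $\nabla\bm{\psi}:\tensor{\phi}$, then integrate by parts componentwise via the divergence theorem. Your closing remarks on justifying the boundary term at $H^1$ regularity (trace theorem or density of smooth functions) go slightly beyond the paper, which performs the computation formally, but they are accurate and harmless.
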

\begin{proof}
    $$ 
    \begin{aligned}
        \int_\Omega \Grad(\bm{\psi}) : \tensor{\phi} & = \frac{1}{2} \int_\Omega \nabla \bm{\psi} : \tensor{\phi} + \nabla \bm{\psi}^\top : \tensor{\phi} \,\d x \, ,\\
        &= \int_\Omega \nabla \bm{\psi}: \frac{1}{2}( \tensor{\phi} + \tensor{\phi}^\top) \, \d x \, , \\
        &= \int_\Omega \sum_{ij} \frac{\partial \bm{\psi}_j}{\partial x_i} \, \tensor{\phi}_{ij} \, \d x \, ,\\
        & = -  \int_\Omega \sum_{ij} \bm{\psi}_j \, \frac{\partial \tensor{\phi}_{ij}}{\partial x_i} \d x+ \int_{\partial \Omega} \sum_{ij} \bm{\psi_j} \tensor{\phi}_{ij} \bm{n}_i \, \d s \, , \\
        & = - \int_\Omega \bm{\psi} \cdot \Diver(\tensor{\phi}) \, \d x+ \int_{\partial \Omega} \bm{\psi}^\top \tensor{\phi} \, \bm{n} \, \d s \, .
    \end{aligned} 
    $$
\end{proof}

\subsection{$\curl$ and $\curl$}

\begin{theorem}
    Let $\bm{\phi}$, $\bm{\psi} \in H^\curl(\Omega)$ be two vector-valued functions, then:
    $$
        \int_\Omega \bm{\phi} \cdot \curl(\bm \psi) \, \d x = \int_\Omega \curl(\bm{\phi}) \cdot \bm \psi \, \d x + \int_{\partial \Omega} (\bm{\psi} \times \bm{\phi}) \cdot \bm{n} \, \d s \, .
    $$
\end{theorem}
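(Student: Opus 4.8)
The plan is to reduce this $\curl$-$\curl$ identity to the classical Gauss divergence theorem via the vector-calculus product rule for the divergence of a cross product. Concretely, I would first recall the pointwise identity
$$
\diver(\bm{\psi} \times \bm{\phi}) = \curl(\bm{\psi}) \cdot \bm{\phi} - \bm{\psi} \cdot \curl(\bm{\phi}) \, ,
$$
valid for sufficiently smooth fields. Integrating over $\Omega$ and applying the divergence theorem to the left-hand side gives
$$
\int_{\partial \Omega} (\bm{\psi} \times \bm{\phi}) \cdot \bm{n} \, \d s = \int_\Omega \bm{\phi} \cdot \curl(\bm{\psi}) \, \d x - \int_\Omega \bm{\psi} \cdot \curl(\bm{\phi}) \, \d x \, ,
$$
and rearranging the two volume integrals yields exactly the claimed formula.

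Alternatively --- and more in line with the index-based computation used for the $\Grad$/$\Diver$ identity above --- I would work component-wise with the Levi-Civita symbol, writing $(\curl \bm{\psi})_i = \epsilon_{ijk} \partial_j \psi_k$ and integrating by parts on the single derivative $\partial_j$. The volume term produced is $-\int_\Omega \epsilon_{ijk} (\partial_j \phi_i)\, \psi_k \, \d x$, which I would re-index using the cyclic symmetry $\epsilon_{ijk} = \epsilon_{kij}$ to recognise as $\int_\Omega \curl(\bm{\phi}) \cdot \bm{\psi} \, \d x$; the boundary term $\int_{\partial\Omega} \epsilon_{ijk} \phi_i \psi_k n_j \, \d s$ is likewise rearranged into $\int_{\partial\Omega}(\bm{\psi}\times\bm{\phi})\cdot\bm{n}\,\d s$ using the same cyclic property.

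The only genuine subtlety is the bookkeeping with the antisymmetry of $\epsilon_{ijk}$: one must track the sign picked up when moving the derivative off $\psi_k$ and onto $\phi_i$, and confirm that this sign is precisely the one relating $\curl(\bm{\psi})$ to $\curl(\bm{\phi})$ and orienting the cross product $\bm{\psi}\times\bm{\phi}$ correctly in the surface integral. A functional-analytic caveat: since the statement is asserted for $\bm{\phi}, \bm{\psi} \in H^{\curl}(\Omega)$ rather than $H^1(\Omega)$, the identity should be read as the defining Green formula for the tangential trace pairing on $\partial\Omega$; one establishes it first for smooth fields as above and then extends by density of smooth fields in $H^{\curl}(\Omega)$, the boundary term being interpreted through the duality between the tangential traces of $\bm{\psi}$ and $\bm{\phi}$.
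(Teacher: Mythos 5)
Your first argument is exactly the paper's proof: apply the divergence theorem to $\bm{\psi}\times\bm{\phi}$, use the product-rule identity $\diver(\bm{\psi}\times\bm{\phi}) = \curl(\bm{\psi})\cdot\bm{\phi} - \bm{\psi}\cdot\curl(\bm{\phi})$, and rearrange. Your closing caveat about interpreting the boundary term via tangential-trace duality and density for fields merely in $H^{\curl}(\Omega)$ is correct and is in fact a point the paper's own proof passes over silently.
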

\begin{proof}
Let us first compute $\int_{\partial \Omega} (\bm{\psi} \times \bm{\phi}) \cdot \bm{n} \, \d s$ using Stokes' identity:
$$
\begin{aligned}
    \int_{\partial \Omega} (\bm{\psi} \times \bm{\phi}) \cdot \bm{n} \, \d s & = \int_\Omega \diver( \bm{\psi} \times \bm{\phi} ) \, \d x \, , \\
                                                                             & = \int_\Omega \curl(\bm{\psi}) \cdot \bm{\phi} - \bm{\psi} \cdot \curl(\bm{\phi}) \, \d x \, .
\end{aligned}
$$
The last line is obtained with classical vector calculus identities. Finally, rearranging terms on both sides gives the result.

\end{proof}

\end{document}